\begin{document} 

\title{Quantitative gap universality for Wigner matrices} 

\vspace{1cm}
\noindent

\begin{minipage}[b]{0.3\textwidth}
\hspace{3cm}
 
 \end{minipage}
 \begin{minipage}[b]{0.3\textwidth}
 \author{Albert Zhang}

\address{Courant Institute, \\
		New York University \\
acz271@nyu.edu}
 \end{minipage}

\begin{minipage}[b]{0.3\textwidth}

 \end{minipage}

\begin{abstract}
		We obtain  the explicit rate of convergence $N^{-\frac{1}{2}+\varepsilon}$ for the gaps between eigenvalues of generalized Wigner matrices in the bulk of the spectrum,   for distributions of matrix entries possibly atomic and supported on enough points,  quantifying the universality result by Erd{\H o}s and Yau \cite{ErYa15}.  The proof proceeds by a Green function comparison, coupled with the relaxation estimate  from \cite{Bo22}. In particular, we extend the 4 moment matching method \cite{TaVu11} to arbitrary moments, allowing to compare resolvents down to the submicroscopic scale $N^{- \frac{3}{2} + \varepsilon}$. 

This method also gives universality of the smallest gaps between eigenvalues for the  Hermitian symmetry class,  providing a universal,  optimal separation of eigenvalues for discrete random matrices with  entries supported on $\Omega(1)$ points. 
\end{abstract}

\tableofcontents

\section{Introduction}

\subsection{Quantitative universality and extreme statistics in random matrix theory}

In the past two decades, significant progress on the universality of gaps between eigenvalues of (generalized) Wigner matrices has been made via the Erd{\H o}s-Schlein-Yau dynamical approach \cites{ErScYa11, ErYa15}, combined with the four moment theorem of Tao and Vu \cite{TaVu11}. See for example \cite{ErYa15} for results in the bulk of the spectrum, and \cite{BoErYa14} for results in the edge of the spectrum,  with non-explicit polynomial rates of convergence $N^{-c}$.   
Quantitative universality, on the other hand, has only been well studied at the edge. In \cite{Bo22}*{Theorem 1.5} the first explicit rate of convergence regarding the largest eigenvalue of a generalized Wigner matrix was shown. In particular,  if $\lambda_N$ denotes the largest eigenvalue  and $\t{TW}$  the Tracy-Widom distribution, we have for any $c > 0$ and large enough $N$ that
\begin{equation}\label{Bo-TW-result}
		d_{\t K}(N^{2/3} (\la_N - 2), \t{TW}) \leq N^{- \frac{2}{9} + c},
\end{equation}
where the Kolmogorov distance is defined in (\ref{eqn:Kolmogorov}). This result was then substantially improved to the almost optimal rate  $N^{- \frac{1}{3} + c}$ in \cites{ScXu22,ScXu23}.
One step for the proof of (\ref{Bo-TW-result}) is to apply a moment-matching or Green function comparison theorem in order to replace $H_N$ by a Gaussian divisible ensemble (GDE),
\[
		\tilde H_t := e^{-t/2} \tilde H_0 + (1 - e^{-t})^{1/2} U,
\]
where $\tilde H_0$ is a generalized Wigner matrix and $U$ is an independent Gaussian orthogonal ensemble (GOE). Then an edge relaxation estimate,  see e.g. \cite{Bo22}*{Theorem 2.8}, is used to replace the gaps of GDE with those of GOE. In a last step, existing quantitative estimates for the rate of convergence for GOE to Tracy-Widom are imported from, say, \cite{JoMa12}. 

We adapt and extend this strategy to the gaps in the bulk of the spectrum, for which we obtain the following estimate on the Kolmogorov distance:
\begin{equation}\label{intro-rate}
		d_{\t K}(N (\la_{k+1} - \la_k), N(\mu_{k+1} - \mu_k)) \leq N^{-\frac{1}{2} + \varepsilon},
\end{equation}
where $(\la_k)$ and $(\mu_k)$ are the ordered eigenvalues of a generalized Wigner matrix and GOE, respectively. We also show a universality result regarding the smallest gap for a complex Wigner matrix
\begin{equation}\label{eqn:min}
				d_{\t K} \l(\min_{\a N \leq k \leq (1 - \a) N} N^{ \frac{4}{3} } (\la_{k+1} - \la_k), \min_{\a N \leq k \leq (1 - \a) N} N^{ \frac{4}{3} } ( \mu_{k+1} - \mu_k) \r)={\rm o}(1),
\end{equation}
where the matrix entries  may have a discrete support, both for (\ref{intro-rate}) and (\ref{eqn:min}).

The bound (\ref{intro-rate}) provides the first explicit speed for bulk universality.  Regarding the optimal separation of eigenvalues (\ref{eqn:min}),
the previous result was only for smallest gaps of Wigner matrices whose entries were smooth on a sufficient scale \cite{Bo22}*{Corollary 1.3}. This result from \cite{Bo22} relies on the reverse heat flow,  see for example \cite{Bo22}*{Lemma 4.1}, as a replacement for the moment matching method, excluding  Wigner matrices whose entries are atomic.  

One caveat is that for both (\ref{intro-rate}) and (\ref{eqn:min}),  the entries of our Wigner matrices are required to be supported on at least $p$ points for a large enough $p$,  e.g.  $p = p(\varepsilon) < \infty$ grows as $\varepsilon \downarrow 0$ in  (\ref{intro-rate}) .  
This constraint arises due the need to extend the four moment theorem of Tao and Vu \cite{TaVu11} by allowing moments to match up to the $p$-th order, for some large $p = p(\varepsilon)$ not depending on $N$, see Lemma~\ref{GDE-moment-matching}. This enables us to obtain better error estimates in our Green function comparison therefore achieving an explicit rate of convergence for the gaps (\ref{intro-rate}).

Finally,  as mentioned before we note that the edge result (\ref{Bo-TW-result}) has been improved in \cites{ScXu22,ScXu23} to $N^{- \frac{1}{3} + c}$, but the methods we use are closer to those in \cite{Bo22} as we do not rely on 
 long-time comparison of the Green functions, a method particularly efficient at the edge of the spectrum but not in the bulk.  Also, for analogous results regarding the largest eigenvalue of sample covariance matrices see  \cites{ScXu23b,Wa24}, and for sparse random matrices see \cite{BucScXu25}. \\

The study of extreme gaps between random spectra started with integrable models. Vinson \cite{Vi01} showed that the smallest gap between eigenvalues of the $N \x N$ circular unitary ensemble (CUE), when rescaled by $N^{4/3}$, has limiting density $3x^2 e^{-x^2}$ as $N$ goes to $\infty$; similar results for Hermitian invariant ensembles (including the Gaussian Unitary Ensemble, GUE) were obtained in his thesis. 
Analogous results hold for the smallest gap for general translation invariant determinantal point processes in large boxes \cite{So05}.  
Smallest gaps statistics were extended to other temperatures, with the smallest gaps of the circular $\b$-ensemble \cite{FeWe21} with integer $\beta$ and the Gaussian orthogonal and symplectic ensembles \cites{FeTiWe19}*{FeLiYa24}.  
The question of largest gaps between nearest eigenvalues was raised in \cite{Di03} and solved at leading order in \cite{BABo13}, and up to the limiting distribution in \cite{FeWe18},  both for CUE and GUE.

Regarding universality of extreme gaps,  some of the results mentioned above on largest and smallest gaps for the CUE, GUE  extend to multimatrix  ensembles \cite{FiGu16}.  For Wigner matrices,  universality of the largest gaps 
was obtained in \cite{LaLoMa20} for any matrix entries,  while \cite{Bo22} showed universal statistics of both largest and smallest gaps for entries satisfying a smoothness condition, see (\ref{c:smooth}).  For symmetric random matrices with possibly discrete entries, 
suboptimal  bounds for separation between eigenvalues have been derived in \cites{LoLu21,NgTaVu17}. 
Motivations for studying the extreme eigenvalues' gaps statistics come from numerical linear algebra \cites{BABo13,DeTr19}, conjectures in analytic number theory \cites{BABo13,BuMi18}, algorithmic number theory \cite{BoHiKe15}, and quantum chaos in the complementary Poissonian regime \cite{BlBoRaRu17}.

Other extreme statistics are of interest in random matrix theory.  Notably 
the work of Fyodorov, Hiary, and Keating (FHK) \cite{FyoHiaKea12} conjectures the maximum of the characteristic polynomial of random matrices and relates it to the maximum of the Riemann zeta function on typical intervals of the critical line.   These random matrix extreme statistics are now well understood in the case of integrable, circular ensembles such as  \cites{ChMaNa18,PaqZei25},  and at the leading order for universal Wigner and $\beta$-ensembles  \cite{BoLoZe25}.  High moment matching of random matrix ensembles as developed in our work may help refine universality results on the FHK conjectures, or other extreme and typical statistics in random matrix theory.

\subsection{Notations}

In this paper we denote $c, C > 0$ generic small and large constants which do not depend on $N$ but may vary from line to line. Also let
\begin{equation}
		\vphi = e^{C_0 (\log\log N)^2}
\end{equation}
be a subpolynomial error parameter, for some fixed $C_0 > 0$. The constant $C_0$ is chosen large enough so that the local law and rigidity estimates,  Theorems~\ref{SLSL} and~\ref{rigidity}, hold.

\begin{definition}
		A generalized Wigner matrix $H = H(N)$ is a Hermitian or symmetric $N \x N$ matrix whose upper-triangular elements $H_{ij} = \conj{H_{ji}}$, $i \leq j$, are independent random variables with mean zero and variances $\si_{ij}^2 = \E \l[ |H_{ij}|^2 \r]$ that satisfy the following two conditions:
		\begin{itemize}[-]
				\item Normalization: for any $1 \leq j \leq N$, $\sum_{i=1}^N \si_{ij}^2 = 1$.
				\item Non-degeneracy: $\si_{ij}^2 \asymp  N^{-1}$ uniformly in $N$ and  $1 \leq i, j \leq N$.
		\end{itemize}
		In the Hermitian case we also assume $\Var \Re(H_{ij}) \asymp \Var \Im(H_{ij})$,  also uniformly in the parameters,  and independence of $\Re(H_{ij})$ and $\Im(H_{ij})$.
\end{definition}
We also assume 
an exponential decay estimate on the matrix entries: there exists $c > 0$ such that for all $i, j, N$ and $x > 0$ we have
\begin{equation}\label{a:exp-decay}
		\P \l( | \sqrt{N} H_{ij} | > x	\r) \leq c^{-1} e^{- x^c}.
\end{equation}

We will always order and denote the eigenvalues $\la_1 \leq \cdots \leq \la_N$ of $H$, and $\mu_1 \leq \cdots \leq \mu_N$ for those of a GOE or GUE matrix. We denote the limiting spectral density for Wigner matrices as
\[
		\rsc(x) = \frac{1}{2\pi} \sqrt{(4 - x^2)_+}.
\]
In this paper we consider the Kolmogorov distance,
\begin{equation}\label{eqn:Kolmogorov}
		d_{\t{K}}(X, Y) = \sup_x \l|  \P(X \leq x) - \P(Y \leq y) \r|,
\end{equation}
as our way of measuring the rate of convergence.

Our main results in Section \ref{sec:main} hold for possibly atomic distributions for the matrix entries, provided they are supported on enough points,   uniformly  in the following sense.

\begin{definition}\label{def:psup}
We say that a probability measure $\mu$ has $p$-support  if there exists $c>\tilde c,\kappa,A$ and  $p$ points  $x_1,\dots,x_p$
such that $\min_{i\neq j}|x_i-x_j|>c$,  $\mu[x_i,x_i+\tilde c]>\kappa$, $\max_i|x_i|\leq A$.

We say that a collection $(\mu_\alpha)_{\alpha\in I}$ of probability measures has uniform $p$-support if each $\mu_\alpha$ has $p$-support with $c>\tilde c,\kappa,A$ being uniform in $\alpha$.

A sequence of symmetric random matrices $(H_N)$ is also said to have  uniform $p$-support is the collection of distributions of $(\sqrt{N}H_{ij})_{N,1\leq i\leq j\leq N}$ has   uniform $p$-support.
In the Hermitian case,  the definition states that $(\sqrt{N}{\rm Re}H_{ij})_{N,1\leq i\leq j\leq N}$ and 
$(\sqrt{N}{\rm Im}H_{ij})_{N,1\leq i\leq j\leq N}$
 have  uniform $p$-support.

In particular, for both universality classes,  if the $\sqrt{N}H_{i\leq j}$ are i.i.d.  with $p$-support,  $(H_N)$ has uniform $p$-support.
\end{definition}

A much more restrictive assumption is smoothness of the matrix entries. A sequence $(H^{(N)})_N$ of generalized Wigner matrices is said to be smooth on scale $\si = \si(N)$ if $\sqrt{N} H_{ij}$ has density $e^{-V}$, where the potential $V = V_{N, i, j}$ satisfies the following condition uniformly in $N, i, j$: For any $k \geq 0$ there exists a $C_k > 0$ such that
\begin{equation}\label{c:smooth}
		|V^{(k)}(x)| \leq C_k \si^{-k} (1 + |x|)^{C_k}, \quad x \in \R.
\end{equation}
Under this smoothness condition, Bourgade \cite{Bo22} obtained quantitative rates of convergence on the extreme gaps process of Wigner matrices. Our result, Theorem~\ref{t:min-gap-rate} does not require this smoothness condition since we do not rely on the reverse heat flow method (see e.g. \cite{Bo22}*{Lemma 4.1}) and instead extend the Green function comparison machinery to higher moments.

\subsection{Main results} \label{sec:main}

We now state our main results on quantitative universality for gaps of Wigner matrices. Note that all of our results concern the bulk of the spectrum. The analogous best known rates in the edge at the time of writing are given by \cite{ScXu23}.

\begin{theorem}[Convergence rate for a single gap]\label{t:gap-rate}
		Let $\a, \varepsilon > 0$ be fixed. 
		Let $\mu_1 \leq \cdots \leq \mu_N$ be the eigenvalues of a GOE matrix. Then there exists an $p= p(\varepsilon)$  such that if $\lambda_1 \leq \cdots \leq \lambda_N$ are the eigenvalues of  a real Wigner ensemble with  uniform $p$-support,   for all large enough $N$, uniformly in $\a N \leq k \leq (1 - \a) N$, we have
		\begin{equation}\label{e:dK-single-gap}
				d_{\t K} \l( N (\la_{k+1} - \la_k ), N(\mu_{k+1} - \mu_k) \r) \leq N^{- \frac{1}{2} + \varepsilon}.
		\end{equation}
		The analogue result holds for the Hermitian symmetry class.
\end{theorem}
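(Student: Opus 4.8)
The plan is to combine a Green function comparison at the submicroscopic scale with the bulk relaxation estimate of \cite{Bo22}. The strategy mirrors the edge proof leading to \eqref{Bo-TW-result}, but is adapted to the bulk: first reduce from the given Wigner ensemble $H$ to a Gaussian-divisible ensemble $\tilde H_t$ with $t = N^{-1+\delta}$ for a small $\delta$; then use the bulk relaxation flow to replace the gaps of $\tilde H_t$ by those of GOE up to error $N^{-1/2+\varepsilon}$; the relaxation estimate from \cite{Bo22} is exactly what quantifies this last step. The novelty — and the reason for the $p$-support hypothesis — is in the first step, which requires a version of the four-moment theorem matching moments up to order $p = p(\varepsilon)$ rather than just four, so that the comparison error is small enough to control the single-gap Kolmogorov distance on the relevant scale.

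\textbf{Step 1: high-moment matching to a GDE.} Given $\varepsilon > 0$, I would first invoke Lemma~\ref{GDE-moment-matching} to produce, for the appropriate $p = p(\varepsilon)$, a generalized Wigner matrix $\tilde H_0$ such that $\tilde H_t = e^{-t/2}\tilde H_0 + (1-e^{-t})^{1/2} U$ has entries whose first $p$ moments match those of $H$ (here $U$ is an independent GOE and $t = N^{-1+\delta}$). The $p$-support hypothesis is what guarantees that such a matching $\tilde H_0$ with the correct variance normalization and exponential decay exists; the point of matching $p$ moments rather than $4$ is that in the telescoping/swapping argument below, each entry replacement incurs an error of order $t^{(p+1)/2}$ times a resolvent fluctuation, and choosing $p$ large makes the accumulated error beat $N^{-1/2+\varepsilon}$ even though the comparison is being run down to spectral scale $\eta \sim N^{-3/2+\varepsilon}$.

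\textbf{Step 2: Green function comparison down to submicroscopic scale.} I would express the law of $N(\lambda_{k+1}-\lambda_k)$ in terms of a smoothed eigenvalue-counting statistic built from traces of resolvents $\operatorname{Im}\operatorname{Tr}(H - E - i\eta)^{-1}$ at scale $\eta = N^{-3/2+\varepsilon}$, following the standard reduction of gap statistics to counting functions (as in \cite{ErYa15}); a single gap being of size $\asymp N^{-1}$ means one must resolve the spectrum below that scale, hence $\eta$ submicroscopic. Then I run the Lindeberg swapping argument entry by entry, replacing $H_{ij}$ by $(\tilde H_t)_{ij}$ one at a time, using the local law and rigidity (Theorems~\ref{SLSL} and~\ref{rigidity}) to bound the resolvent entries and their derivatives, and using the $p$-moment matching to kill the low-order terms in the resolvent expansion. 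This yields $d_{\t K}(N(\lambda_{k+1}-\lambda_k), N(\tilde\lambda^{(t)}_{k+1}-\tilde\lambda^{(t)}_k)) \le N^{-1/2+\varepsilon/2}$, uniformly over bulk indices $k$.

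\textbf{Step 3: relaxation to GOE.} For the Gaussian-divisible matrix $\tilde H_t$ with $t = N^{-1+\delta}$, apply the bulk relaxation estimate of \cite{Bo22} to conclude $d_{\t K}(N(\tilde\lambda^{(t)}_{k+1}-\tilde\lambda^{(t)}_k), N(\mu_{k+1}-\mu_k)) \le N^{-1/2+\varepsilon/2}$. Combining the three steps by the triangle inequality for Kolmogorov distance gives \eqref{e:dK-single-gap}. The Hermitian case is identical with GUE in place of GOE and complex-valued moment matching of the real and imaginary parts. \textbf{The main obstacle} I anticipate is the careful bookkeeping in Step 2: one must show that the resolvent expansion error, which involves high derivatives of the gap statistic in the matrix entries evaluated at scale $\eta = N^{-3/2+\varepsilon}$, genuinely improves with $p$ — i.e. that the gain $t^{(p+1)/2} = N^{(-1+\delta)(p+1)/2}$ dominates the polynomial loss $\eta^{-O(1)} = N^{O(1)}$ coming from differentiating resolvents at submicroscopic scale. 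This forces a quantitative relation $p = p(\varepsilon)$ and requires that the higher-order stability/local-law bounds hold uniformly down to scale $N^{-3/2+\varepsilon}$, which is the technical heart of extending \cite{TaVu11} to arbitrary moments.
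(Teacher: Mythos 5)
Your high-level plan is the same as the paper's: use Lemma~\ref{GDE-moment-matching} to produce a Gaussian-divisible ensemble matching $p$ moments, run a Lindeberg swap at submicroscopic resolvent scale $\eta_d = N^{-3/2+\varepsilon}$ (Theorem~\ref{4-moment-thmbis}), and combine with the relaxation estimate \eqref{e:gaps-relaxation}. However, two mechanistic points in your sketch are wrong, and a third is an unnecessary and misleading choice.

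First, the per-swap error in the Lindeberg exchange is \emph{not} of order $t^{(p+1)/2}$: since Lemma~\ref{GDE-moment-matching} matches the first $p$ moments of $\bf v$ and $\bf w$ exactly (not just up to $O(t)$), the Taylor expansion \eqref{eqn:F} cancels all terms of order $\le p$ and leaves a remainder $N^{-(p+1)/2}\|F^{(p+1)}\|_\infty$, with $\|F^{(p+1)}\|_\infty \lesssim \varphi^C(N\eta_d)^{-(p+1)}$ by the stability estimate \eqref{eqn:Stab}. The time $t$ plays no role in this error; confusing it with a $t^{(p+1)/2}$ gain is characteristic of the long-time continuity approach, not of the Tao--Vu swapping used here. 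Second, because the moment matching is exact for any $t \le t_0$ with $t_0$ a constant, there is no reason to take $t = N^{-1+\delta}$; the paper takes $t \asymp 1$, which drives the relaxation contribution $\frac{N^\varepsilon}{N^2 t}$ all the way down to $N^{-2+\varepsilon}$ and makes it negligible. Third, you attribute the final $N^{-1/2+\varepsilon}$ to a balance between the moment-matching error and the relaxation error, split evenly between your Steps 2 and 3. In fact the moment-matching error $\varphi^C N^2 N^{-(p+1)/2}/(N\eta_d)^{p+1} = \varphi^C N^{2-\varepsilon(p+1)}$ can be made arbitrarily small by choosing $p = p(\varepsilon)$ large, and the relaxation error is $N^{-2+\varepsilon}$. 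The true dominant term is $N^{1+6\varepsilon}\eta_d \approx N^{-1/2+7\varepsilon}$ in \eqref{4-moment-estbis}, which comes from the smoothing scale $\eta_0$, the grid scale $\Delta$, and the GOE two-point correlation estimate \eqref{e:wegner}; these are all internal to the Green function comparison and reflect the constraint that $\eta_d$ cannot be taken below $\sim N^{-3/2}$ without the per-swap remainder blowing up. So the $N^{-1/2+\varepsilon}$ is really a trade-off between the admissible resolvent scale $\eta_d$ and the grid/smoothing losses, not between swapping and relaxation.
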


The above bound $N^{ - \frac{1}{2} + \varepsilon}$ gives a very explicit, reasonable convergence speed, but in view of (i) the
error estimate $(N^2t)^{-1}$ in \cite{Bo22}*{Corollary 3.3} (ii) the Berry-Esseen theorem and the $N^2$ independent sources of randomness for Wigner matrices,  it seems plausible that the optimal error term  is $N^{-1}$, as stated below.

\begin{conjecture}[Optimal speed of convergence]\label{optimal}
		Let $\a, \varepsilon > 0$ be fixed. Then there exists an $p = p(\varepsilon)$ sufficiently large such that if $(H_N)$ is a generalized Wigner ensemble with uniform $p$-support,   for all large enough $N$, uniformly in $\a N \leq k \leq (1 - \a) N$, we have
$
				d_{\t K} \l( N (\la_{k+1} - \la_k ), N(\mu_{k+1} - \mu_k) \r) \leq N^{- 1 + \varepsilon}.
$
Moreover,  a stronger conjecture states that $p=2$ is enough. 
\end{conjecture}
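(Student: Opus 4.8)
\emph{A plan toward Conjecture~\ref{optimal}.} We keep the two-step pipeline behind Theorem~\ref{t:gap-rate}: (a) a Green function comparison replacing $H_N$ by a Gaussian-divisible ensemble $\tilde H_t=e^{-t/2}\tilde H_0+(1-e^{-t})^{1/2}U$ whose entries match those of $H_N$ to order $p$; and (b) the bulk relaxation estimate \cite{Bo22}*{Corollary 3.3}, passing from the gaps of $\tilde H_t$ directly to those of the reference GOE (or GUE, in the Hermitian class). The novelty the conjecture demands is that the comparison in (a) be shown to cost only $N^{-1+\varepsilon}$ rather than the $N^{-1/2+\varepsilon}$ of Theorem~\ref{t:gap-rate}. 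Indeed, since Step (b) contributes an error of order $(N^2 t)^{-1}$ and the moment matching of Lemma~\ref{GDE-moment-matching} lets us take $t$ as large as a small negative power of $N$, Step (b) is then of size $N^{-2+\delta}$, comfortably below the target; the whole difficulty is therefore concentrated in Step (a). Heuristically, the floor $N^{-1}$ is exactly what a Berry-Esseen count over the $N^2$ independent matrix entries predicts.

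\emph{Setup of the comparison.} Fix $\a,\varepsilon>0$, choose $\delta=\delta(\varepsilon)>0$ small, set $t=N^{-\delta}$, and let $p=p(\varepsilon)$ be large. Using the uniform $p$-support hypothesis, build $\tilde H_0$ (with uniform $p'$-support for a suitable $p'$) so that $\tilde H_t$ matches $H_N$ in the first $p$ moments of each rescaled entry; this is possible since, for $t$ a negative power of $N$, the required moments of $\tilde H_0$ differ from those of $H_N$ by only $O(t)$. It then remains to prove
\[
d_{\t K}\!\left(N(\la_{k+1}-\la_k),\,N(\tilde\la_{k+1}-\tilde\la_k)\right)\leq N^{-1+\varepsilon}
\]
uniformly for $\a N\leq k\leq(1-\a)N$.

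\emph{The crux: an optimal-scale Green function comparison.} As in the proof of Theorem~\ref{t:gap-rate}, one expresses a single gap through resolvents at a submicroscopic scale $\eta$ and runs a Lindeberg swap over the $N^2$ entries, the agreement of the first $p$ moments making the per-swap cost $O(N^{-(p+1)/2})$ times a bound on the $(p+1)$-st derivative of the gap functional. To resolve $N(\la_{k+1}-\la_k)$ to precision $N^{-1}$ one must take $\eta$ well below $N^{-1}$, and there the gap functional is no longer bounded by a fixed $N^{\varepsilon}$ with overwhelming probability: on an event of probability of order $N\eta$ an eigenvalue sits within $\eta$ of the spectral parameter, and the relevant resolvent entries are then as large as $\eta^{-1}$. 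The plan is therefore twofold. First, establish an optimal-scale level-repulsion / anti-concentration bound for $H_N$, valid for entries with only uniform $p$-support, showing that after removing an event of probability $N^{-1+\varepsilon}$ the resolvents feeding the gap functional are bounded by a fixed power $N^{\varepsilon}$; this is the analogue, at the correct precision, of the gap-regularity inputs used for Theorem~\ref{t:gap-rate}, and will likely require propagating rigidity and level repulsion along the Dyson flow rather than importing them as black boxes. Second, on that good event, run the $p$-moment swap with the derivative bounds tracked through these $N^{\varepsilon}$ resolvent bounds, so that the residual is at most $N^{2}\cdot N^{-(p+1)/2}\cdot N^{Cp\varepsilon}\leq N^{-1+\varepsilon}$ once $p=p(\varepsilon)$ is chosen large enough; a bootstrap over a bounded sequence of scales $\eta_0>\eta_1>\cdots$, using the comparison already established at $\eta_{j-1}$ to license the swap at $\eta_j$, may be needed to reach the finest scale. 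I expect the first part --- optimal-scale anti-concentration for general, possibly atomic, entries, uniform in the energy window --- to be the principal obstacle: it is precisely the estimate currently known only with a loss of $N^{\varepsilon}$ in the exponent, and removing that loss is what separates the conjectured rate from Theorem~\ref{t:gap-rate}.

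\emph{On the stronger claim $p=2$.} Matching only the second moment leaves the third cumulant of the entries free, and this cumulant generically enters resolvent statistics at order $N^{-1/2}$. To reach $p=2$ one must show that, for the gap functional specifically, this leading non-universal correction cancels --- for instance that its contribution is odd under reflecting the energy window about the reference point, hence integrates to $O(N^{-1+\varepsilon})$. This cancellation is not needed for the main assertion and is, in my view, the delicate extra ingredient behind the $p=2$ form of the conjecture.
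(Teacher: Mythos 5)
The statement you are addressing is Conjecture~\ref{optimal}: the paper deliberately leaves it open and offers no proof, so there is nothing to compare your argument against except the method of Theorem~\ref{t:gap-rate}, which the paper itself presents as falling short of the conjectured rate. Your text is a research plan rather than a proof, and you say so; but it is worth being precise about why the two steps you flag as ``the crux'' are not merely technical. In the Lindeberg swap of Theorem~\ref{4-moment-thmbis}, the cost of exchanging one entry at spectral scale $\eta$ is $N^{-(p+1)/2}(N\eta)^{-(p+1)}=(N^{3/2}\eta)^{-(p+1)}$, so after summing over the $N^2$ entries one needs $\eta\geq N^{-3/2+3/(p+1)}$; increasing $p$ lets $\eta$ approach $N^{-3/2}$ from above but never cross it, and the resolution in the gap variable is then $N\eta\geq N^{-1/2+3/(p+1)}$. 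The same barrier appears independently in the discretization/two-point term $\sum_j(N\Delta)^2\asymp N\Delta$, which is $N^{-1/2+{\rm o}(1)}$ for any admissible $\Delta\gtrsim\eta_d$. So the $N^{-1/2}$ in Theorem~\ref{t:gap-rate} is a hard wall for the method at any fixed $p$, not an artifact of suboptimal bookkeeping; beating it requires comparing resolvents at scale $\eta\sim N^{-2}$, two full orders below what any moment matching of fixed order can reach.

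Your proposed remedy --- restrict to a good event of probability $1-N^{-1+\varepsilon}$ on which resolvents at scale $\eta\ll N^{-3/2}$ are bounded by $N^{\varepsilon}$, then run the swap there --- has a circularity you should confront. The events you would discard (an eigenvalue within $\eta$ of the window edge) are exactly the events whose probabilities the Kolmogorov distance at precision $N^{-1}$ must resolve; moreover the good event depends on the entry being swapped, which obstructs the Taylor expansion in that entry that underlies (\ref{eqn:F}). The required input --- level repulsion at scale $N^{-2}$ with error $N^{-1+\varepsilon}$, uniform over atomic entry distributions --- is itself of the same depth as the conjecture. Your treatment of the relaxation step is essentially the paper's (take $t\asymp 1$ in (\ref{e:gaps-relaxation}), giving $N^{-2+\varepsilon}$; there is no need to take $t$ a negative power of $N$, and Lemma~\ref{GDE-moment-matching} in fact requires $t\leq t_0$ with $t_0$ a constant). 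In short: you have correctly located the obstruction, but the two ingredients that would overcome it are asserted, not supplied, so this should be read as a plausible programme consistent with the paper's own heuristic for the $N^{-1}$ rate, not as a proof.
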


We also have the associated result for the smallest gap of a \emph{complex} Wigner matrix: 
\begin{theorem}[Convergence for the smallest gap, complex case]\label{t:min-gap-rate}
		Let $\a> 0$ be fixed. Let $\mu_1 \leq \cdots \leq \mu_N$ be the eigenvalues of a GUE matrix. Then there exists an $p$  such that if $\lambda_1 \leq \cdots \leq \lambda_N$ are the eigenvalues of  a complex Wigner ensemble with  uniform $p$-support,  for all large enough $N$ we have 
		\begin{equation}\label{eqn:compMin}
				d_{\t K} \l(\min_{\a N \leq k \leq (1 - \a) N} N^{ \frac{4}{3} } (\la_{k+1} - \la_k), \min_{\a N \leq k \leq (1 - \a) N} N^{ \frac{4}{3} } ( \mu_{k+1} - \mu_k) \r)={\rm o}(1).
		\end{equation}
In particular combining the above theorem with \cite{BABo13}*{Corollary 1.5},  there exists a constant $c=c(\alpha)$ such that for any $x\geq 0$,
\[
\mathbb{P}\left(c\min_{\a N \leq k \leq (1 - \a) N} N^{ \frac{4}{3} } (\la_{k+1} - \la_k)\leq x\right)\to \int_0^x 3u^2 e^{-u^3}{\rm d} u.
\]
\end{theorem}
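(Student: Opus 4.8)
The plan is to prove Theorem~\ref{t:min-gap-rate} by the same two-ingredient strategy that underlies Theorem~\ref{t:gap-rate}: a high-moment Green function comparison that replaces the complex Wigner ensemble $H_N$ by a Gaussian divisible ensemble $\tilde H_t = e^{-t/2}\tilde H_0 + (1-e^{-t})^{1/2} U$ with $U$ an independent GUE, followed by a bulk relaxation estimate from \cite{Bo22} that transfers the joint statistics of many consecutive gaps from $\tilde H_t$ to pure GUE. The key structural point is that the smallest gap is a functional of the full rescaled local eigenvalue point process on windows of size $N^{-1}$ around each bulk point, so it suffices to control the joint distribution of \emph{all} gaps $\{N^{4/3}(\lambda_{k+1}-\lambda_k) : \alpha N \le k \le (1-\alpha)N\}$ simultaneously, not just a single one. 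The output of the comparison should be a statement that for a test function depending on polynomially many rescaled gaps, $|\E\, F(H_N) - \E\, F(\tilde H_t)| \le N^{-c}$ for a suitable $t = N^{-1+\delta}$, which by Lemma~\ref{GDE-moment-matching} (the $p$-moment matching extension) holds once the entries have uniform $p$-support with $p = p(\alpha)$ large enough; this is where the atomicity is accommodated, exactly as in the $p$-support hypothesis.

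The concrete steps I would carry out are as follows. First, express the event $\{\min_k N^{4/3}(\lambda_{k+1}-\lambda_k) > x\}$ via a product of smoothed indicators over the bulk index range, using rigidity (Theorem~\ref{rigidity}) to restrict to $O(N)$ indices and to localize each gap inside a window of width $N^{-1+o(1)}$; approximate the hard indicator $\mathbf 1(N^{4/3}(\lambda_{k+1}-\lambda_k) > x)$ by a smooth observable of the eigenvalues at the microscopic scale, controlling the mesoscopic regularization error by level repulsion / local law bounds. Second, apply the Green function comparison (the higher-moment version announced in the abstract, reaching scale $N^{-3/2+\varepsilon}$) to each such smooth observable to pass from $H_N$ to $\tilde H_t$ with $t$ just above $N^{-1}$; here the gain from matching $p$ moments is precisely what pushes the comparison error below the scale needed to resolve gaps of order $N^{-4/3}$. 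Third, invoke the bulk relaxation estimate of \cite{Bo22} (the analogue of \cite{Bo22}*{Theorem 2.8} in the bulk, e.g.\ via \cite{Bo22}*{Corollary 3.3} with its $(N^2 t)^{-1}$ error) to replace the gaps of $\tilde H_t$ by those of GUE up to an error $o(1)$, uniformly over the polynomially many windows. Concatenating the three steps yields $d_{\t K}(\min_k N^{4/3}(\lambda_{k+1}-\lambda_k), \min_k N^{4/3}(\mu_{k+1}-\mu_k)) = o(1)$, which is \eqref{eqn:compMin}. The final display then follows immediately by composing this with \cite{BABo13}*{Corollary 1.5}, which identifies the limiting law of the GUE smallest gap as $\int_0^x 3u^2 e^{-u^3}\,du$ after the appropriate constant rescaling $c = c(\alpha)$ accounting for the non-constant semicircle density on the bulk interval.

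The main obstacle is the \emph{uniformity over the $\Theta(N)$ windows} at the submicroscopic scale $N^{-4/3}$. A single-gap comparison with error $N^{-1/2+\varepsilon}$, as in Theorem~\ref{t:gap-rate}, is far too weak when union-bounded over $N$ indices; instead one must compare the entire product observable at once and show the comparison error does not accumulate a factor of $N$. This requires the moment-matching Lemma~\ref{GDE-moment-matching} to deliver an error that is genuinely $N^{-c}$ with $c$ as large as desired (by taking $p$ large), so that even after summing over $O(N)$ terms — or, better, after bounding the joint observable directly without a naive union bound — one still obtains $o(1)$; and it requires the relaxation estimate to hold jointly for all the microscopic windows, which is available since the Dyson Brownian motion coupling in \cite{Bo22} is already a statement about the full process rather than a single gap. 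A secondary technical point is verifying that the smoothing of the minimum-of-gaps indicator does not destroy the comparison: one needs the smoothing scale to sit comfortably between $N^{-3/2+\varepsilon}$ (the resolution of the Green function comparison) and $N^{-4/3}$ (the scale of the gaps being measured), and to control the probability that some gap falls in the resulting transition layer using the GUE-side level-repulsion estimate $3u^2\,du$ near $u=0$, which is exactly small enough. I expect the $\Omega(1)$-point support threshold $p = p(\alpha)$ to be dictated precisely by the requirement that $N \cdot (\text{comparison error}) = o(1)$, mirroring the role of $p(\varepsilon)$ in Theorem~\ref{t:gap-rate}.
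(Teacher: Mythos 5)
Your overall strategy (Green function comparison with high-moment matching followed by bulk relaxation, applied to a product observable rather than naively union-bounding single-gap errors) is the same as the paper's, and you correctly flag that the central difficulty is preventing an accumulation of $N$-factors across the $\Theta(N)$ windows. But you stop short of the key mechanism that resolves that difficulty, and without it the argument does not close.

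Concretely, after smoothing the paper writes the observable as $F = \prod_\alpha (1 - Q_\alpha)$, where $Q_\alpha$ is a smoothed indicator of the event $\{\lambda_{\alpha+1}-\lambda_\alpha \lesssim N^{-4/3}u\}$. The Lindeberg-exchange error at each entry swap is governed by $\|F^{(p+1)}\|_\infty$ (derivative in the swapped entry $x = H_{ab}$). Applying the product rule blindly gives $\sim N^{p+1}$ nonzero terms, which wipes out the gain from $p$-moment matching. The paper's fix has two pieces you did not supply. First, a deterministic observation: by eigenvector delocalization, perturbing one entry by $|x| < N^{-1/2+\kappa}$ shifts each eigenvalue by at most $N^{-3/2+O(\varepsilon)}$, so any index $\alpha$ with $\lambda_{\alpha+1}-\lambda_\alpha > N^{-4/3+\delta}$ has $Q_\alpha \equiv 0$ on the relevant $x$-window, hence all its $x$-derivatives vanish; thus only near-minimum gaps contribute to $F^{(p+1)}$. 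Second, a concentration estimate (Lemma~\ref{eqn:notmany}) that the number of bulk gaps below $N^{-4/3+\delta}$ is $\le N^\kappa$ with overwhelming probability, for arbitrarily small $\kappa$; this gives $\|F^{(p+1)}\|_\infty \lesssim (N^\kappa)^{p+1}\varphi^C (N\eta_d)^{-(p+1)}$ rather than $N^{p+1}(N\eta_d)^{-(p+1)}$, which is exactly what makes the comparison error summable. Proving Lemma~\ref{eqn:notmany} is itself nontrivial: it requires a further moment-matching argument applied to the count $\sum_\alpha Q_\alpha$, run as an induction in $\ell$ (the properties $(P_\ell)$), bootstrapped from a GUE $r$-point correlation bound (Lemma~\ref{GUE-count-r-lma}) obtained via the determinantal structure and Hadamard's inequality. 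Your ``level repulsion / GUE-side $3u^2\,du$'' remark gestures at the right input but does not identify the count estimate as a structural ingredient of the derivative bound, which is the load-bearing step.

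A secondary point: you propose $t$ just above $N^{-1}$, whereas the paper's moment-matching construction (Lemma~\ref{GDE-moment-matching}) produces $t \asymp 1$, which is the cleaner choice — it makes the relaxation error $N^{-2+o(1)}$, comfortably below the gap scale, and, more importantly, the whole point of matching $p$ moments is to free the comparison from any short-time constraint on $t$; with $t = N^{-1+\delta}$ the relaxation error $N^{-1-\delta+\varepsilon}$ only beats $N^{-4/3}$ when $\delta > 1/3$, so the parameter is not ``just above $N^{-1}$'' in any case.

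\end{document}
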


\begin{remark}
The required $p$ above can be made explicit,  e.g.    the choice $p \geq 10$ (say) suffices, which may be seen through our proof.
\end{remark}

\begin{remark}
In (\ref{eqn:compMin}) one can consider $\min_{\a N \leq k \leq (1 - \a) N} c_k N^{ \frac{4}{3} } (\la_{k+1} - \la_k)$ (and similarly with $\pmb{\mu}$) with $c_k\asymp 1$ instead,  and the same result holds.
\end{remark}

For the real symmetric class, we have the following conjecture, which similarly states convergence of the smallest gap to the smallest gap for GOE.  
\begin{conjecture}[Convergence for the smallest gap,  real case]\label{t:min-gap-rate-real}
		Let $\a > 0$ be fixed. Let $\mu_1 \leq \cdots \leq \mu_N$ be the eigenvalues of a GOE matrix. Then there exists $p\geq 2$ sufficiently large such that if $(H_N)$ is a real random matrix from the generalized Wigner ensemble,  with  uniform $p$-support,  for all large enough $N$ we have 
		\[
				d_{\t K} \l(\min_{\a N \leq k \leq (1 - \a) N} N^{ \frac{3}{2} } (\la_{k+1} - \la_k), \min_{\a N \leq k \leq (1 - \a) N} N^{ \frac{3}{2} } ( \mu_{k+1} - \mu_k) \r) ={\rm o}(1).
		\]
A stronger conjecture states that $p=2$ is enough,  both for the the result above and  for (\ref{eqn:compMin}).

In particular, combined with \cite{FeTiWe19}*{Corollary 1} we expect that for some $C=C(\alpha)$ we have, for any $x\geq 0$, 
\[
\mathbb{P}\left(C\min_{\a N \leq k \leq (1 - \a) N} N^{ \frac{3}{2} } (\la_{k+1} - \la_k)\leq x\right)\to \int_0^x 2u e^{-u^2}{\rm d} u.
\]
\end{conjecture}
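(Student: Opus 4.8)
\emph{Proof proposal.} The natural attack on Conjecture~\ref{t:min-gap-rate-real} is the route that proves its complex counterpart, Theorem~\ref{t:min-gap-rate}: compare $H_N$ to a Gaussian-divisible ensemble by a high-moment Green function comparison, then run the relaxation flow and import the Gaussian smallest-gap law. I would organize it in three steps, and then identify the step that stalls in the real-symmetric case. \emph{Step 1 (reduction to a Gaussian-divisible ensemble).} Using the uniform $p$-support hypothesis, invoke Lemma~\ref{GDE-moment-matching} to build $\tilde H_t = e^{-t/2}\tilde H_0 + (1-e^{-t})^{1/2}U$, with $\tilde H_0$ a generalized Wigner matrix, $U$ an independent GOE (GUE in the complex case), and $t = N^{-1+\omega}$ for small $\omega>0$, whose entries match those of $H_N$ through the $p$-th moment: the $p$-support is precisely what supplies enough free parameters for this construction, replacing the reverse heat flow \cite{Bo22}*{Lemma 4.1} that is unavailable for atomic entries.

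\emph{Step 2 (Green function comparison at the relevant scale).} Following \cite{Bo22}, encode the indicator of $\{N^{4/3}\min_{\alpha N\le k\le(1-\alpha)N}(\lambda_{k+1}-\lambda_k) > s\}$ --- equivalently, ``no two bulk eigenvalues lie within $sN^{-4/3}$'' --- as a smooth, gap-sensitive functional $\Phi$ of the resolvent entries of $G(E_j + i\eta)$ over a net of ${\rm O}(N^{1+\delta})$ bulk energies $E_j$, taking $\eta = N^{-4/3-\delta}$ small enough that, on the high-probability event where the local law (Theorem~\ref{SLSL}) and rigidity (Theorem~\ref{rigidity}) hold, the smoothing error is ${\rm o}(1)$. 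Then run a Lindeberg swap, extending \cite{TaVu11} from four to $p$ matched moments: exchange the ${\rm O}(N^2)$ entries of $H_N$ for those of $\tilde H_t$ one at a time and Taylor-expand $\Phi$ in each replaced entry to order $p$; the first $p$ terms cancel by Step 1, and the remainder --- bounded using (\ref{a:exp-decay}) for the residual moments and the local law for the derivative bounds --- is ${\rm O}(N^2\,\eta^{-C(\alpha)}\,N^{-(p+1)/2})$, which is ${\rm o}(1)$ for $p=p(\alpha)$ large, because $\eta = N^{-4/3-\delta}$ lies above the submicroscopic barrier $N^{-3/2+\varepsilon}$ down to which this comparison is valid. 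This yields $\E[\Phi(H_N)] = \E[\Phi(\tilde H_t)] + {\rm o}(1)$, hence agreement of the two smallest-gap laws up to ${\rm o}(1)$ in Kolmogorov distance.

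\emph{Step 3 (relaxation and the Gaussian input).} For $\tilde H_t$, run Dyson Brownian motion and apply the bulk relaxation/homogenization estimate of \cite{Bo22}: for $t = N^{-1+\omega}$ the rescaled consecutive gaps $N^{4/3}(\tilde\lambda_{k+1}-\tilde\lambda_k)$ and those of a suitably coupled Gaussian ensemble differ by ${\rm o}(1)$ with probability $1-{\rm o}(1)$, uniformly over bulk indices $k$; together with the level-repulsion bounds the local law supplies at scale $N^{-4/3}$, the rescaled small-gap point process of $\tilde H_t$ then converges to that of the Gaussian ensemble. In the complex case this point process is, by \cite{BABo13}*{Corollary 1.5}, the Poisson process producing the law $\int_0^x 3u^2 e^{-u^3}\,{\rm d} u$; a Chen--Stein / factorial-moment argument --- the factorial moments of the number of small gaps being governed by the universal short-range correlation functions and bounded uniformly --- upgrades point-process convergence to convergence of the minimum. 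Chaining with Step 2 proves Theorem~\ref{t:min-gap-rate}. In the real case \cite{BABo13} is replaced by \cite{FeTiWe19}*{Corollary 1}, which would give the law $\int_0^x 2u e^{-u^2}\,{\rm d} u$, but now at the finer scale $N^{-3/2}$.

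\emph{Main obstacle.} Even for Theorem~\ref{t:min-gap-rate} the delicate point is the last upgrade in Step 3 --- the uniform no-clustering (factorial-moment) estimate that turns convergence of a single rescaled gap into convergence of the minimum over $\Theta(N)$ gaps, which in \cite{Bo22} leaned partly on smoothness and here must be extracted from the local law together with Step 2. For Conjecture~\ref{t:min-gap-rate-real} the plan breaks already in Step 2: the GOE smallest gap lives at the finer scale $N^{-3/2}$, so $\Phi$ would have to be evaluated at $\eta = N^{-3/2-\delta}$, which is \emph{below} the barrier $N^{-3/2+\varepsilon}$; there the accumulated $N^2\eta^{-C}$ swapping error is no longer beaten by taking $p$ large (the a priori local-law inputs themselves stop being useful at that scale), and with the coarsest admissible $\eta = N^{-3/2+\varepsilon}$ one can detect only pairs within $N^{-3/2+\varepsilon}$, hence can control only $N^{3/2-\varepsilon}\min_k(\lambda_{k+1}-\lambda_k)$ and loses the exact $N^{3/2}$ normalization. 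Closing this $\varepsilon$ --- by resumming the swapping expansion so as to exploit that $p$-support secures more than finitely many matched moments in an averaged sense, or by comparing gap laws directly without passing through resolvents at scale $N^{-3/2}$ --- is precisely what is missing; the stronger ``$p=2$'' form would have to bypass comparison altogether and amounts to unconditional universality of the Wigner / Dyson Brownian motion local statistics at the submicroscopic scale $N^{-3/2}$, well beyond current reach.
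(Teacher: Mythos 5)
The statement you were asked about is a \emph{conjecture}, not a proved theorem: the paper explicitly says it ``does not seem accessible with the methods developed in this paper,'' and only establishes the weaker bound (\ref{HN-GOE-min}), which loses a factor $N^{\delta}$ against the $N^{-3/2}$ gap scale. Your proposal is therefore correctly an honest non-proof, and the obstruction you isolate is exactly the one the paper gives: the Lindeberg swap accrues an error of order $N^{2}\,N^{-(p+1)/2}/(N\eta)^{p+1}$ (see (\ref{eqn:MomentMatch}) and the proof of Theorem~\ref{4-moment-thmbis}), which can only be beaten by taking $p$ large when $\eta\gg N^{-3/2}$; since the GOE smallest gap lives precisely at $N^{-3/2}$, the smoothing scale cannot be pushed below the scale one needs to resolve, and with the coarsest admissible $\eta_d=N^{-3/2+\varepsilon}$ one only controls $N^{3/2-\varepsilon}\min(\lambda_{k+1}-\lambda_k)$. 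This is precisely why the real case is left open while the complex case (gap scale $N^{-4/3}\gg N^{-3/2}$) goes through, and why the accessible real statement is (\ref{HN-GOE-min}) rather than the conjecture. Your three-step skeleton for the weaker claim also mirrors the paper's: moment matching via Lemma~\ref{GDE-moment-matching}, Green function comparison as in Theorem~\ref{4-moment-thmbis-min}, relaxation via (\ref{e:gaps-relaxation}), and a GOE correlation input (which, as the paper notes, would require a Pfaffian analogue of Lemma~\ref{GUE-count-r-lma}).

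Two minor parameter slips, neither of which affects your verdict. First, the paper works with $t\asymp 1$ (permitted by Lemma~\ref{GDE-moment-matching}, where $t_0$ is $N$-independent), not $t=N^{-1+\omega}$; with your choice the relaxation error $N^{\varepsilon}/(N^2 t)=N^{-1-\omega+\varepsilon}$ does not fall below the complex gap scale $N^{-4/3}$ unless $\omega>1/3$, and certainly not below $N^{-3/2}$ in the real case. Second, the paper uses the finer smoothing scale $\eta_d=N^{-3/2+\varepsilon}$ rather than your $N^{-4/3-\delta}$ throughout Section 2; both lie in the admissible window $(N^{-3/2},N^{-4/3})$ for the complex case, but the paper's choice is the one that makes the barrier at $N^{-3/2}$ vivid, which is the point of your concluding paragraph.
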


Although this conjecture does not seem accessible with the methods developed in this paper,  the following weaker statement is within reach e.g. see the remark after Theorem~\ref{4-moment-thmbis-min}. 
It corresponds to any scale greater than the smallest gap in the case of the real symmetry class:
		For any fixed $\a, \d> 0$ there exists a $p = p(\varepsilon)$ sufficiently large such that if $(H_N)$ is a real generalized Wigner matrix with uniform $p$-support,   for all large enough $N$ we have
		\begin{equation}\label{HN-GOE-min}
				 \P^{H_N} ( \min_{\a N \leq k \leq (1 - \a) N} N^{ \frac{3}{2}  } (\la_{k+1} - \la_k) \geq N^\d ) \leq N^{ - \frac{1}{2} + \varepsilon}.
		\end{equation}

The reason for the small $\d > 0$ discrepancy between the complex and real cases is that our Green function comparison estimate  will be of type
\begin{equation}\label{eqn:MomentMatch}
		\E \l[ \Tr \l( \frac{1}{H_N - z} \r) - \Tr \l(  \frac{1}{H^{\t{GOE}} - z} \r) \r] \leq N^q \frac{N^{- p/2}}{(N\h)^{p}}
\end{equation}
for some fixed constant $q > 0$, with $z = E + \i\h$. In particular, only when $\h \gg N^{- 3/2}$ can we take $p$ fixed,  sufficiently large to identify the Green function on scale $\eta$.

\begin{remark}
There is no obvious guess for the optimal error term in Theorem \ref{t:min-gap-rate} and Conjecture \ref{t:min-gap-rate-real}, and it may depend on $p$ from the $p$-support assumption.

Regarding the error term  in  (\ref{HN-GOE-min}),  we expect that
the  probability of the smallest gap of a GOE matrix to live at scale larger than $N^{- \frac{3}{2} + \d}$ should be exponentially small in $N$,  and similarly for any generalized Wigner matrix. 
\end{remark}

\section{Rate of convergence for gaps between eigenvalues}

As is standard in the three-step approach to universality (see e.g.  \cite{ErYa15}), we aim to combine a moment matching estimate with relaxation via dynamics. In particular, we let $\pmb{\la}(t)$ and $\pmb{\mu}(t)$ be the eigenvalues of two copies of the matrix Ornstein-Uhlenbeck process
\begin{equation}\label{e:DBM}
		{\rm d} H_t = \frac{1}{\sqrt{N}}{\rm d}B_t - \frac{1}{2}H_t {\rm d}t,
\end{equation}
where the initial conditions $\pmb{\la}(0)$ and $\pmb{\mu}(0)$ are given by an independent generalized Wigner matrix and GOE, respectively. From \cite{Bo22}*{Corollary 3.3}, the following relaxation estimate regarding an individual eigenvalue gap is known:
\begin{theorem} There exists a coupling between the matrix evolutions under (\ref{e:DBM}) such that the following holds.
		Let $\a, c> 0$ be fixed, arbitrarily small. Then for any (large) $D > 0$, there exist $C, N_0$ such that for any $N > N_0$, $\vphi^C / N < t < 1$ and $\a N \leq k \leq (1 - \a) N$ we have
		\begin{equation}\label{e:gaps-relaxation}
				\P \l(  |(\la_{k+1}(t) - \la_k(t)) - (\mu_{k+1}(t) - \mu_k(t))| > \frac{N^c}{N^2 t} \r)\leq N^{-D}.
		\end{equation}
\end{theorem}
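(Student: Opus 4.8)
The plan is to reduce both matrix Ornstein--Uhlenbeck flows to their eigenvalue Dyson Brownian motions, couple them so that the difference of the two particle systems solves a discrete parabolic equation, and then extract the rate $(N^2t)^{-1}$ from a homogenization (parabolic regularity) estimate for that equation; this is the three-step strategy of \cite{ErScYa11,ErYa15} in the quantitative form developed in \cite{Bo22}*{Section~3}, of which the statement is essentially Corollary~3.3. Concretely, I would drive the eigenvalue dynamics of $\pmb{\la}(t)$ and $\pmb{\mu}(t)$ by the \emph{same} family of independent Brownian motions, so that both solve Dyson Brownian motion with initial data the spectra of the Wigner matrix and of the GOE respectively; this coupling of the particle systems is realised by a coupling of the matrix flows (\ref{e:DBM}), as in \cite{Bo22}. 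Since Dyson Brownian motion keeps the particles ordered (no collisions for $\beta\ge1$), the difference $z_i(t):=\la_i(t)-\mu_i(t)$ is determined by the two trajectories and solves
\begin{equation}\label{pp:par}
		\partial_t z_i = \sum_{j\ne i}\mathcal B_{ij}(t)(z_j-z_i) - \frac{1}{2}z_i,
		\qquad \mathcal B_{ij}(t)=\frac{1}{N(\la_i(t)-\la_j(t))(\mu_i(t)-\mu_j(t))}>0 ,
\end{equation}
a discrete parabolic equation with positive, time-dependent, random coefficients; the restoring term $-\frac{1}{2}z_i$ is a harmless contraction and plays no role.

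I would then feed in rigidity. Since $H_t$ is still a Gaussian-divisible generalized Wigner matrix, Theorem~\ref{rigidity} applies to $\pmb{\la}(t)$ and $\pmb{\mu}(t)$ simultaneously for all $t\in[0,1]$, on an event of overwhelming probability. On this event the initial datum obeys $|z_i(0)|\le\vphi^C/N$ for bulk indices (and $\le\vphi^C N^{-2/3}$ near the edges); a maximum-principle argument propagates the bound $\sup_{s\le t}|z_i(s)|\le\vphi^C/N$ for indices staying in the bulk; and the coefficients obey $\mathcal B_{ij}(t)\le\vphi^C N/(i-j)^2$ whenever $|i-j|$ is not too small, since $|\la_i-\la_j|\ge c|\gamma_i-\gamma_j|\ge c|i-j|/N$ in the bulk ($\gamma_i$ the classical locations). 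The few nearly-diagonal terms, which may be large near an anomalously small gap, are never needed pointwise, because only the Dirichlet form $\sum_{ij}\mathcal B_{ij}(z_i-z_j)^2$ enters the energy estimates. In other words the generator $\mathcal L(t):=\sum_j\mathcal B_{ij}(t)(\,\cdot\,)$ behaves like $N$ times an order-one, Cauchy-type generator, whose heat kernel spreads over an index window of width $\sim Nt$ in time $t$; already heuristically this forces $|z_{k+1}(t)-z_k(t)|$ to be of order $\|z(0)\|_\infty/(Nt)\sim\vphi^C/(N^2t)$.

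To make this rigorous I would use the short-range approximation: restrict the generator to indices $|i-j|\le\ell$ with $\ell$ of order $Nt$ up to subpolynomial corrections, treating the long-range part as a perturbation via Duhamel's formula and controlling it through the slow variation of $\mathcal B$ in the index coming from rigidity. For the short-range flow one then establishes (i) finite speed of propagation, so that $z_k(t)$ depends, up to a negligible error, only on $z_j(s)$ with $|j-k|\le N^{1+c}t$; the contribution of indices near the spectral edge, where rigidity is weaker, is controlled separately, using that both ensembles relax to the same macroscopic profile and that the edge affects a bulk gap only weakly. And (ii) a De Giorgi--Nash--Moser type regularity estimate for this discrete non-local parabolic flow, of the form
\[
		|z_{k+1}(t)-z_k(t)| \le \frac{N^c}{Nt}\,\sup_{s\in[t/2,t]}\ \sup_{|j-k|\le N^{1+c}t}|z_j(s)| ,
\]
which is precisely where the $N$-diffusivity and the order-one dispersion of $\mathcal L$ produce the gain $(Nt)^{-1}$. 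Combining (ii) with the a priori bound $\sup|z_j(s)|\le\vphi^C/N$ from the previous step gives $|z_{k+1}(t)-z_k(t)|\le\vphi^C/(N^2t)\le N^c/(N^2t)$ whenever $t>\vphi^C/N$, which is (\ref{e:gaps-relaxation}) once the exceptional events are collected into the $N^{-D}$ term.

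I expect the main obstacle to be step (ii): proving the parabolic regularity estimate with a \emph{uniform} $(Nt)^{-1}$ gain for a discrete, long-range, time-dependent operator whose random coefficients are only comparable to $N/(i-j)^2$ and can be anomalously large for adjacent indices near a small gap. Making the De Giorgi--Nash--Moser iteration robust to this — by running the energy estimates through the Dirichlet form rather than through pointwise coefficient bounds — together with the bulk/edge bookkeeping in the finite-speed step, is the technical heart of the proof; this is exactly what \cite{Bo22}*{Section~3} carries out.
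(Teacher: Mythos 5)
The paper does not prove this theorem: it is quoted verbatim from \cite{Bo22}*{Corollary 3.3} and used as a black box in the three-step strategy of Figure~\ref{fig:1}, so there is no in-paper proof to compare against. That said, your sketch is an essentially accurate high-level account of what \cite{Bo22}*{Section~3} (building on the homogenization theory for Dyson Brownian motion of Bourgade--Erd\H{o}s--Yau--Yin and Landon--Sosoe--Yau) actually carries out: couple the two eigenvalue flows by the same Brownian increments, observe that $z_i=\lambda_i-\mu_i$ solves a discrete parabolic equation with nonlocal random coefficients $\mathcal B_{ij}\asymp N/(i-j)^2$, feed in rigidity to get $\|z(0)\|_\infty\lesssim\varphi^C/N$ in the bulk, and prove a regularity estimate giving the discrete gradient $z_{k+1}-z_k$ the extra factor $(Nt)^{-1}$. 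One caveat worth flagging is that the regularity step in \cite{Bo22} is not a De Giorgi--Nash--Moser iteration; it is an \emph{explicit} homogenization, comparing the random discrete propagator to a deterministic continuum kernel (a Poisson/Cauchy-type kernel, reflecting the $1/(i-j)^2$ tail) by means of a short-range cutoff, finite-speed-of-propagation, and a maximum-principle argument, which yields the sharp $(N^2t)^{-1}$ bound directly without iterating energy inequalities. Your emphasis on controlling only the Dirichlet form, rather than pointwise coefficients, and on separate bulk/edge bookkeeping is nonetheless correctly placed, as these are exactly the mechanisms that make the argument robust against anomalously small gaps and weaker edge rigidity.
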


The relaxation estimate (\ref{e:gaps-relaxation}) is a component enabling the three-step strategy in our paper. Our contribution is in the moment-matching component. Altogether, our approach to universality for a generalized Wigner matrix $H$ may be summed up in Figure~\ref{fig:1}, where we have labeled the current best error estimates for the two respective components.

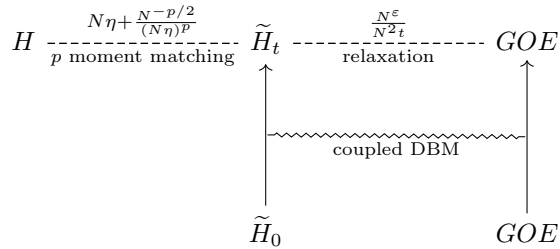
\begin{figure}[H]
		\centering
		\begin{tikzcd}
			H &&& {\tilde H_t} &&& GOE \\
			\\
			\\
			&&& {\tilde H_0} &&& GOE
			\arrow["{N\eta + \frac{N^{-p/2}}{(N\eta)^p} }", dashed, no head, from=1-1, to=1-4]
			\arrow["{p \text{ moment matching}}"', dashed, no head, from=1-1, to=1-4]
			\arrow["{\frac{N^\varepsilon}{N^2 t}}", dashed, no head, from=1-4, to=1-7]
			\arrow["{\text{relaxation}}"', dashed, no head, from=1-4, to=1-7]
			\arrow[""{name=0, anchor=center, inner sep=0}, from=4-4, to=1-4]
			\arrow[""{name=1, anchor=center, inner sep=0}, from=4-7, to=1-7]
			\arrow["{\text{coupled DBM}}", squiggly, no head, from=1, to=0]
		\end{tikzcd}
\caption{Three-step approach to universality with $p$ moments matching.  The total error for statistics on scale $\eta$ is $N\eta + \frac{N^{-p/2}}{(N\eta)^p}+\frac{N\varepsilon}{N^2 t}$, $t\ll 1$.}
\label{fig:1}
\end{figure}

What is  missing is the existence of a GDE matrix $\tilde H_t$, matching the first $p$ moments with a given generalized Wigner matrix $H$.  For this we proceed as follows.

We first state the following lemma as in \cite{ErYaYi11}*{Lemma 3.4} (which we will not use) for sake of comparison with our existence result, Lemma~\ref{GDE-moment-matching}.  The result below covers the Bernoulli case but only matches the first three moments and the fourth with an $O(t)$ error.
\begin{lemma}\label{lem:match3}
		Let $m_3$ and $m_4$ be two real numbers such that
		\[
				m_4 - m_3^2 - 1 \geq 0, \quad m_4 \leq C_2
		\]
		for some positive constant $C_2$. Let $U$ be a real Gaussian random variable with mean 0 and variance 1. Then for any sufficiently small $t > 0$ (depending on $C_2$), there exits an independent real random variable $h_0$, centered with variance 1 and with subexponential decay,  for which the first four moments of
		\[
				h_t := (1 - t)^{1/2} h_0 + t^{1/2} U
		\]
		are $m_1(h_t) = 0$, $m_2(h_t) = 1$, $m_3(h_t) = m_3$, and $m_4(h_t)$ such that
		\[
				|m_r(h_t) - m_4| = C t,
		\]
		where $C = C(C_2) > 0$.
\end{lemma}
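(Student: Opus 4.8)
The plan is to construct $h_0$ explicitly as a small perturbation of a symmetric three-point random variable and then solve for the parameters so that the moment constraints of $h_t$ hold. Write $h_t = (1-t)^{1/2} h_0 + t^{1/2} U$; expanding,
\[
m_2(h_t) = (1-t)\,m_2(h_0) + t, \qquad m_3(h_t) = (1-t)^{3/2} m_3(h_0),
\]
\[
m_4(h_t) = (1-t)^2 m_4(h_0) + 6 t (1-t) m_2(h_0) + 3 t^2,
\]
using $m_1(U)=m_3(U)=0$, $m_2(U)=1$, $m_4(U)=3$. So imposing $m_2(h_t)=1$ forces $m_2(h_0)=1$ (exactly), imposing $m_3(h_t)=m_3$ forces $m_3(h_0) = (1-t)^{-3/2} m_3$, and with $m_2(h_0)=1$ the fourth moment becomes $m_4(h_t) = (1-t)^2 m_4(h_0) + 6t(1-t) + 3t^2$. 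The free handle is $m_4(h_0)$, which we must choose so that $m_4(h_0) \geq m_3(h_0)^2 + 1$ (the only constraint needed for a centered, variance-one random variable to exist with prescribed first four moments), and so that the resulting $m_4(h_t)$ is within $Ct$ of $m_4$. The natural choice is $m_4(h_0) := m_4 - 6t(1-t) - 3t^2$ divided by $(1-t)^2$, or more simply just pick $m_4(h_0)$ to make $m_4(h_t)$ exactly equal to $m_4$ if that value is admissible; since $m_4(h_0) = (1-t)^{-2}(m_4 - 6t(1-t) - 3t^2) = m_4 - O(t)$ and the hypothesis $m_4 - m_3^2 - 1 \geq 0$ holds with room, for $t$ small enough (depending on $C_2$, hence bounding $m_4$) we indeed get $m_4(h_0) \geq m_3(h_0)^2 + 1$, and then $|m_4(h_t) - m_4| = 0 \leq Ct$ trivially, or, if one prefers to keep a canonical family, the error is visibly $O(t)$.

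The remaining task is the \emph{existence} of a centered, variance-one random variable $h_0$ with $m_3(h_0) = m_3'$ and $m_4(h_0) = m_4'$ prescribed (with $m_4' \geq (m_3')^2 + 1$ and $m_4' \leq C_2 + O(t)$) and with subexponential tails. For this I would use a four-point support: seek $h_0$ supported on $\{a_1, a_2, a_3, a_4\}$ with weights $w_i \geq 0$, $\sum w_i = 1$. With four atoms one has eight real parameters (four locations, four weights minus the normalization), against four moment equations ($m_1 = 0$, $m_2 = 1$, $m_3 = m_3'$, $m_4 = m_4'$), so the system is underdetermined and one can fix, say, $a_1 = -M$, $a_4 = M$ for a suitable constant $M = M(C_2)$ and solve for the inner atoms and the weights; alternatively, invoke the classical Hamburger/Hausdorff truncated moment problem: a sequence $(1, 0, 1, m_3', m_4')$ is realized by a probability measure iff the associated Hankel matrix $\begin{pmatrix} 1 & 0 & 1 \\ 0 & 1 & m_3' \\ 1 & m_3' & m_4' \end{pmatrix}$ is positive semidefinite, whose determinant is exactly $m_4' - (m_3')^2 - 1 \geq 0$, the given hypothesis. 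One then obtains a representing measure supported on at most $3$ points, automatically compactly supported (hence subexponential decay) because the atoms of the minimal representing measure are bounded in terms of $m_2$ and $m_4$, i.e.\ in terms of $C_2$.

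The main obstacle is the boundary case $m_4' = (m_3')^2 + 1$, where the Hankel matrix is singular and the representing measure degenerates to two atoms; one must check that $h_0$ is then still genuinely random (not constant) with variance $1$ — which it is, since a variance-one measure cannot be a single point mass — and, more importantly, that the construction stays quantitatively nondegenerate uniformly so that $h_t$ retains subexponential decay with constants depending only on $C_2$. A second minor point is bookkeeping the $t$-dependence: since $m_3(h_0) = (1-t)^{-3/2} m_3$ blows up mildly as $t \to 1$, one restricts to $t$ small (say $t < 1/2$), as allowed by the statement, so that all quantities stay bounded in terms of $C_2$ alone; then the atom locations, and hence the subexponential-decay constant, are uniform. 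I would present the four-atom explicit parametrization rather than the abstract moment-problem route, since it makes the tail bound and the uniformity in $C_2$ transparent, and record the $O(t)$ fourth-moment discrepancy as coming solely from the $(1-t)^2$ and $6t(1-t)$ factors above.
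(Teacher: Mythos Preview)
The paper does not prove this lemma; it is quoted verbatim from \cite{ErYaYi11}*{Lemma 3.4} and explicitly flagged as ``which we will not use,'' included only for contrast with Lemma~\ref{GDE-moment-matching}. So there is no in-paper proof to compare against, and what follows is an assessment of your argument on its own.

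Your moment expansion and the reduction to the truncated Hamburger problem are correct, and the observation that the minimal representing measure for $(1,0,1,m_3',m_4')$ is supported on at most three points bounded in terms of $C_2$ (hence subexponential) is the right way to get the tail estimate. The gap is in your treatment of the fourth moment. You write that ``the hypothesis $m_4 - m_3^2 - 1 \geq 0$ holds with room,'' but the hypothesis is only a weak inequality. At the boundary $m_4 = m_3^2 + 1$ your exact-matching choice $m_4(h_0) = (1-t)^{-2}(m_4 - 6t(1-t) - 3t^2)$ gives
\[
m_4(h_0) - m_3(h_0)^2 - 1 \;=\; (m_4 - m_3^2 - 1) + (2m_4 - 3m_3^2 - 6)\,t + O(t^2)
\;=\; -(m_3^2+4)\,t + O(t^2) < 0
\]
for every small $t>0$, so the Hankel matrix fails to be positive semidefinite and no representing measure exists. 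Your later paragraph on ``the boundary case'' misidentifies the obstacle: the issue is not that the two-atom measure might degenerate, but that the required moment vector is infeasible.

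The fix is the one you gesture at but do not carry out: abandon exact matching and set, say, $m_4(h_0) := m_3(h_0)^2 + 1 + (m_4 - m_3^2 - 1)$, which preserves Hankel positivity identically and yields $m_4(h_t) = m_4 + O_{C_2}(t)$ by the same expansion. With that one-line change the rest of your argument goes through.
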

In the higher moment matching below,  for any probability measure $\mu$ on $\mathbb{R}$ we denote 
$m_k(\mu)=\int x^k{\rm d}\mu(x)$ and $\pmb{m}_p(\mu) = (m_1(\mu), \ldots, m_p(\mu)) \in \R^p$.

\begin{lemma}\label{l:int-moment-simplex}
		Fix a $p \geq 1$ and $\mu$ a probability measure with $p$-support in the sense of Definition \ref{def:psup}, with parameters $c,\tilde c,\kappa,A$. 
		Then there exists $\delta=\delta(c,\tilde c,\kappa,A,p)>0$ such that for any $\pmb m\in\mathbb{R}^p$ 
		such that $|\pmb m-\pmb{m}_p(\mu)|<\delta$ there exists $\nu$ coinciding with $\mu$ on $|x|>A+1$ such that $\pmb{m}_p(\nu)=\pmb{m}$.
\end{lemma}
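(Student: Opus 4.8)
The plan is to realize the moment map near $\mu$ as a submersion, so that the desired measure $\nu$ exists by an open-mapping/implicit-function argument. Concretely, I would look for $\nu$ of the form $\nu = \mu + \sum_{i=1}^{p} t_i (\delta_{x_i}^{\,\tilde c} - \mu|_{[x_i,x_i+\tilde c]}/\kappa_i)$, or more cleanly: pick $p$ disjoint ``bump'' perturbations supported in the intervals $[x_i, x_i+\tilde c]$ provided by the $p$-support hypothesis. For each $i$, since $\mu[x_i,x_i+\tilde c] > \kappa$, we can write $\mu = \mu_i' + \rho_i$ where $\rho_i$ is a nonnegative measure of mass $\geq \kappa$ supported in $[x_i,x_i+\tilde c]$, and consider the signed perturbation $\nu_{\pmb t} = \mu + \sum_{i=1}^p t_i \theta_i$, where $\theta_i = \eta_i - \rho_i$ with $\eta_i$ a fixed probability measure on $[x_i, x_i+\tilde c]$ (e.g. normalized $\rho_i$, or $\tfrac12(\delta_{x_i} + \delta_{x_i+\tilde c})$). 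Then $\nu_{\pmb t}$ has total mass $1$ for all $\pmb t$, it is a nonnegative measure as long as $|t_i| \leq 1$ (since subtracting $t_i\rho_i$ only removes a fraction of the mass $\mu$ already placed on $[x_i,x_i+\tilde c]$, using disjointness of the intervals which holds because $\min_{i\neq j}|x_i - x_j| > c > \tilde c$), and it agrees with $\mu$ outside $\bigcup_i [x_i, x_i+\tilde c] \subset [-A, A+1]$, hence certainly on $|x| > A+1$.

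Next I would consider the smooth map $F : (-1,1)^p \to \mathbb{R}^p$, $F(\pmb t) = \pmb{m}_p(\nu_{\pmb t})$, which by linearity is affine: $F(\pmb t) = \pmb{m}_p(\mu) + M \pmb t$ where $M$ is the $p\times p$ matrix with entries $M_{ki} = m_k(\theta_i) = \int x^k \, {\rm d}\theta_i(x)$. To conclude it suffices to show $M$ is invertible; then $F$ is an affine isomorphism, and for $|\pmb m - \pmb m_p(\mu)| < \delta := \tfrac12 \|M^{-1}\|^{-1}$ (say) the preimage $\pmb t = M^{-1}(\pmb m - \pmb m_p(\mu))$ lies in $(-1,1)^p$ and $\nu := \nu_{\pmb t}$ does the job; crucially $\delta$ depends only on $M$, which depends only on $\mu$ through its restrictions $\rho_i$ and on the fixed choices $x_i, \tilde c$ — i.e. only on $c, \tilde c, \kappa, A$ (after checking that one can bound $\|M^{-1}\|$ in terms of these parameters uniformly). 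Invertibility of $M$: the columns $m_k(\theta_i)$, $k=1,\dots,p$, are the moments (up to order $p$) of measures $\theta_i = \eta_i - \rho_i$ living on $p$ disjoint intervals. Suppose $\sum_i a_i \theta_i$ has vanishing moments $m_1 = \dots = m_p = 0$; then the polynomial $Q(x) = \prod_{i}(x - y_i)$ of degree $p$... here I need a cleaner argument. Better: choose the $\eta_i$ to be point masses, $\eta_i = \delta_{y_i}$ for $p$ distinct points $y_i$ (one in each interval) — then $F(\pmb t) - F(0)$ has $k$-th coordinate $\sum_i t_i(y_i^k - m_k(\rho_i))$, and one shows the relevant matrix is a perturbation of a Vandermonde-type matrix; or simplest of all, parametrize by $2p$ parameters moving mass between $x_i$ and $x_i + \tilde c$ inside each interval and between intervals, giving a genuine Vandermonde structure. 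The honest statement is: the moment map, restricted to measures supported on a fixed finite set $\{x_1,\dots,x_p, x_1+\tilde c,\dots\}$ of $\geq p+1$ points with total mass $1$ and enough ``room'' (each point carrying mass bounded below), is a submersion onto $\mathbb{R}^p$ at $\mu$, because the tangent directions $\delta_{z} - \delta_{z'}$ for points $z,z'$ in the support span a space whose image under $d(\pmb m_p)$ contains a Vandermonde matrix's column span, which is all of $\mathbb{R}^p$ as long as there are $p+1$ distinct points.

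The main obstacle I anticipate is \emph{quantitative uniformity of $\delta$}: one must show the operator norm of $M^{-1}$ (equivalently, the smallest singular value of $M$ bounded below) is controlled purely by $c, \tilde c, \kappa, A$ and $p$, not by finer features of $\mu$. This is where the hypotheses $\min_{i\neq j}|x_i - x_j| > c$ (separation, so the Vandermonde determinant of the $x_i$'s is bounded below), $\mu[x_i, x_i+\tilde c] > \kappa$ (enough removable mass, so the perturbations $\theta_i$ have norm and moments of controlled size), and $\max_i |x_i| \leq A$ (compactness, so all moments $m_k(\theta_i)$ are bounded by $(A+1)^k$) all get used. Concretely, $M$ factors (approximately) through a Vandermonde matrix $V = (x_i^k)$ times a diagonal rescaling by masses $\in [\kappa, 1]$, plus a correction of controlled size; $\det V$ is bounded below by a function of $c$ and $p$ via $\prod_{i<j}|x_i - x_j| \geq c^{\binom p2}$ and entries bounded by $A^p$, so $\|V^{-1}\|$ is bounded, and one absorbs the correction by taking $\tilde c$ small (or by a more careful two-point-per-interval argument that makes the Vandermonde structure exact). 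Everything else — nonnegativity of $\nu_{\pmb t}$, the ``coincides with $\mu$ on $|x|>A+1$'' clause, affineness of $F$ — is routine bookkeeping once the nondegeneracy of $M$ with uniform constants is in hand.
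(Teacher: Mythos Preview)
Your overall strategy---parametrize a family of probability measures through $\mu$, show the moment map is a submersion there, invoke the inverse function theorem with uniform Jacobian control---is exactly the paper's. But your additive parametrization $\nu_{\pmb t}=\mu+\sum_i t_i(\eta_i-\rho_i)$ has a genuine gap: nonnegativity fails for $t_i<0$. Your justification (``subtracting $t_i\rho_i$ only removes a fraction of the mass $\mu$ already placed'') covers only $t_i\geq 0$; for $t_i<0$ you are subtracting $|t_i|\eta_i$, and since $\eta_i$ (e.g.\ a Dirac at $x_i$, or $\tfrac12(\delta_{x_i}+\delta_{x_i+\tilde c})$) need not be dominated by $\mu$, the result is signed. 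Thus the admissible parameter set is only the corner $[0,1]^p$, and $F(0)=\pmb m_p(\mu)$ lies on the \emph{boundary} of the image, not in its interior---no full neighborhood of $\pmb m_p(\mu)$ is obtained. Your alternative suggestions (point masses at interval endpoints, ``$2p$ parameters'') hit the same wall, and ``taking $\tilde c$ small'' is not available since $\tilde c$ is a given parameter. A side issue: with $\eta_i$ a probability and $\rho_i$ of mass $\kappa_i$, the perturbation $\theta_i$ has mass $1-\kappa_i\neq 0$, so $\nu_{\pmb t}$ is not a probability measure as you claim; and once you rescale to fix this, $m_k(\theta_i)=O(\tilde c)$, so $M$ is not ``approximately a Vandermonde $(x_i^k)$'' as asserted.

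The paper sidesteps all of this with a different family: decompose $\mu=\sum_{k=1}^p\mu_k+\nu_0$ with $\mu_k$ supported on $[x_k,x_k+\tilde c]$ of mass $\geq\kappa$, and \emph{translate} each piece, $\mu^{\pmb\delta}(B)=\sum_k\mu_k(B-\delta_k)+\nu_0(B)$. Translations preserve total mass and nonnegativity automatically for every small $\pmb\delta$ (not just a cone), and $\mu^{\pmb\delta}=\mu$ on $|x|>A+1$. The Jacobian at $\pmb\delta=0$ has entries $J_{k\ell}=\int\ell y^{\ell-1}\,d\mu_k(y)$; expanding $\det J$ multilinearly and recognizing the Vandermonde determinant gives
\[
\det J \;=\; p!\int_{\mathbb{R}^p}\prod_{i<j}(y_i-y_j)\;d\mu_1(y_1)\cdots d\mu_p(y_p),
\]
which has a fixed sign (the supports are disjoint and ordered) and satisfies $|\det J|\geq p!\,(c-\tilde c)^{p(p-1)/2}\kappa^p$. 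This is the uniform lower bound you were looking for, obtained cleanly without any perturbative Vandermonde argument.
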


\begin{remark}
By analogy with the case $p=2$ from Lemma \ref{lem:match3} one expects that it is even possible to match $p+1$ moments,  i.e.  for any $\pmb m\in\mathbb{R}^{p+1}$ close to $\mu_{p+1}(\mu)$ find $\nu$ close to $\mu$ such that $\pmb{m}_p(\nu)=\pmb{m}$.  We don't consider  such an improvement here: The dependence $p=p(\varepsilon)$ in our main results,  e.g.  in Theorem \ref{t:gap-rate},  is anyways suboptimal from other aspects of the proof.
\end{remark}

\begin{proof}
From Definition \ref{def:psup}, $\mu$ can be written as
$\mu=\sum_{k=1}^p\mu_k+\nu$ where $\mu_k$ is supported on $[x_k,x_k+\tilde c]$,  $\mu_k[x_k,x_k+\tilde c]>\kappa$ and $\min_{i\neq j}|x_i-x_j|>c$, and $\mu_1,\dots,\mu_p,\nu$ are positive measures. Without loss of generality we assume $x_1>\dots > x_p$.

For any $\pmb{\delta}=(\delta_1,\dots,\delta_p)\in\mathbb{R}^p$ we define the probability measure $\mu^{\pmb{\delta}}$ through
$\mu^{\pmb{\delta}}(B)=\sum_{k=1}^p\mu_k(B-\delta_k)+\nu(B)$ for any Borel set $B$. Then
$m_\ell(\mu^{\pmb{\delta}})=\sum_{k=1}^p\int(y+\delta_k)^\ell\mu_k({\rm d} y)+\int y^\ell\nu({\rm d}y)$, so that
$\partial_{\delta_k}m_\ell(\mu^{\pmb{\delta}})\mid_{\delta_k=0}=\int\ell y^{\ell-1}\mu_k({\rm d} y)$.
Therefore at $\pmb{\delta}=0$ the Jacobian matrix $J=(\partial_{\delta_k}m_\ell(\mu^{\pmb{\delta}}))_{k,\ell}$ satisfies
\begin{multline*}
\underset{p\times p}{{\rm det}}\ J=\underset{p\times p}{{\rm det}}\ \int \ell y^{\ell-1}\mu_k({\rm d} y)
=
\frac{1}{p!}\int_{\mathbb{R}^p} \underset{p\times p}{{\rm det}} (\ell y_i^\ell)\ \cdot\  \underset{p\times p}{{\rm det}} \mu_k({\rm d} y_i)
=
\int_{\mathbb{R}^p} \prod_{1\leq i<j\leq p}(y_i-y_j)\cdot\  \underset{p\times p}{{\rm det}} \mu_k({\rm d} y_i)\\
=p!\int_{y_1>\dots>y_p} \prod_{1\leq i<j\leq p}(y_i-y_j)\cdot\  \underset{p\times p}{{\rm det}} \mu_k({\rm d} y_i),
\end{multline*}
where we have used Andr\'eief's identity (see e.g.  \cite{BouDub2020}*{Lemma 5.1}) in the second equality,  the Vandermonde determinant in the third one, and invariance of the product of determinants under permutation of the indices in the fourth one. As the supports of the $\mu_k$'s are disjoint in decreasing order,  in the domain $y_1>\dots >y_p$ we have ${{\rm det}} \mu_k({\rm d} y_i)=\mu_1({\rm d}y_1)\dots \mu_p({\rm d}y_p)$, so we have proved
\begin{equation}\label{eqn:det}
\underset{p\times p}{{\rm det}}\ J\geq p! (c-\tilde c)^{\frac{p(p-1)}{2}} \kappa^p.
\end{equation}
In particular $\pmb{\delta}\mapsto \pmb{m}_p(\mu^{\pmb{\delta}})$ is locally invertible around $\pmb{\delta}=0$,  so  
for $\pmb m$ close enough to $\pmb{m}_p(\mu)$ there exists a $\nu=\mu^{\pmb{\delta}}$ such that  $\pmb{m}_p(\nu)= \pmb{m}$. By construction $\mu=\nu$ on $|x|>A+1$.

Regarding uniformity of $\delta$ with respect to $c,\tilde c,\kappa,A$,  the result immediately follows from the observations that $\det(JJ^*)>c_0(c,\tilde c,\kappa)$ from (\ref{eqn:det}),   $\|JJ^*\|_{2\to2}={\rm O}_p(A^{2p})$  and $\max_{i,j,\ell,|\pmb{\delta}|<1}|\partial_{\delta_i,\delta_j}m_\ell(\mu^{\pmb{\delta}})|={\rm O}_p(A^p)$.
\end{proof}

\noindent In the following lemma, we write $m_k(X)$ for $m_k(\mu)$ when $X$ is a real random variable with distribution $\mu$.

\begin{lemma}\label{GDE-moment-matching}
	Fix a $p \geq 2$ 
	and assume that the distribution of a probability measure $\mu$ has $p$-support in the sense of Definition \ref{def:psup}
	with parameters $c,\tilde c,\kappa,A$.  Assume $m_1(\mu)=0$, $m_2(\mu)=1$.  Then there exists $t_0 = t_0(c,\tilde c,\kappa,A,\pmb{m}_p(\mu)) > 0$ such that for any $0\leq t\leq t_0$, 
	there exists a probability measure $\nu$
coinciding with $\mu$ on $\{|x|>A+1\}$ such that if $X,G$ are independent, $X\sim \nu$, $G$ is a standard Gaussian,  then
		\begin{equation}\label{GDE-form}
				 X_t := e^{-t/2}  X + (1 - e^{-t})^{1/2} G,
		\end{equation}
has its first $p$ moments match exactly those of $\mu$. That is, for $j \leq p$ we have
		\begin{equation}\label{moment-constraint}
				m_j( X_t) = m_j(\mu).
		\end{equation}
\end{lemma}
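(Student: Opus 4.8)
The plan is to reduce the statement to an application of the implicit-function/local-invertibility result already proved in Lemma~\ref{l:int-moment-simplex}, together with an elementary expansion of the moments of $X_t$ in the time parameter $t$. First I would record the exact relation between $\pmb m_p(X_t)$ and $\pmb m_p(\nu)$. Since $X = X_0 \sim \nu$ and $G$ is an independent standard Gaussian, expanding $X_t^j = (e^{-t/2}X + (1-e^{-t})^{1/2}G)^j$ by the binomial theorem and taking expectations gives, for each $j \le p$,
\[
		m_j(X_t) = e^{-jt/2} m_j(\nu) + \sum_{\ell=1}^{j} \binom{j}{\ell} e^{-(j-\ell)t/2}(1-e^{-t})^{\ell/2} m_{j-\ell}(\nu)\, m_\ell(G),
\]
where $m_\ell(G)$ is $0$ for odd $\ell$ and $(\ell-1)!!$ for even $\ell$. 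The key structural observation is that every term in the sum over $\ell \ge 1$ carries a factor $(1-e^{-t})^{\ell/2} = O(t^{1/2})$, while the $j=1,2$ moments are automatically handled: $m_1(X_t)=e^{-t/2}m_1(\nu)$ and $m_2(X_t)=e^{-t}m_2(\nu) + (1-e^{-t})$, so matching $m_1(\mu)=0$ forces $m_1(\nu)=0$ and matching $m_2(\mu)=1$ forces $m_2(\nu)=1$, independently of everything else. Thus only the moments $j=3,\dots,p$ require the perturbation argument.

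Next I would set this up as a fixed-point problem on the $(p-2)$-dimensional space of moments $(m_3,\dots,m_p)$ (keeping $m_1=0$, $m_2=1$ fixed). Define the map $F_t$ sending a candidate moment vector $\pmb m = (m_1,\dots,m_p)$ with $m_1=0,m_2=1$ to the moment vector $\pmb m_p(X_t)$ obtained when $\nu$ is the measure produced by Lemma~\ref{l:int-moment-simplex} with prescribed moments $\pmb m$; we want to solve $F_t(\pmb m) = \pmb m_p(\mu)$. At $t=0$ this is solved by $\pmb m = \pmb m_p(\mu)$ itself (take $\nu = \mu$). By the displayed expansion, $F_t(\pmb m) - \pmb m$ has size $O(t^{1/2})$ uniformly for $\pmb m$ in a fixed neighborhood of $\pmb m_p(\mu)$ of radius $\delta$ (with $\delta$ the constant from Lemma~\ref{l:int-moment-simplex}, which guarantees that $\nu$ exists and depends in a controlled way on $\pmb m$). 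Since Lemma~\ref{l:int-moment-simplex} gives that $\pmb m \mapsto \nu \mapsto \pmb m_p(\nu)$ is the identity on moments and the dependence is smooth with Jacobian bounded below, a standard contraction-mapping or quantitative inverse-function argument shows that for $t \le t_0$ small enough (depending on $c,\tilde c,\kappa,A,\pmb m_p(\mu)$ through $\delta$ and through the $O(t^{1/2})$ constants) there is a unique $\pmb m(t)$ with $|\pmb m(t) - \pmb m_p(\mu)| < \delta$ solving $F_t(\pmb m(t)) = \pmb m_p(\mu)$. The corresponding $\nu$ from Lemma~\ref{l:int-moment-simplex} then satisfies (\ref{moment-constraint}) and coincides with $\mu$ on $\{|x|>A+1\}$ by construction.

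Two bookkeeping points remain. I need $X_t$, equivalently $\nu$, to have subexponential (here: exponential-type) decay so it is an admissible Wigner entry distribution; this is immediate because $\nu$ agrees with $\mu$ outside a fixed compact set $\{|x|\le A+1\}$ and is a probability measure, so it is compactly perturbed from $\mu$ and inherits tail bounds trivially (in fact $\nu$ is supported in a bounded set once we note $\mu$ has $p$-support with $\max_i|x_i|\le A$, though in general $\mu$ need only have the assumed exponential decay and the conclusion still holds). I also need to double-check that the $\delta$-neighborhood constraint from Lemma~\ref{l:int-moment-simplex} is compatible: the perturbation moves the target moments by only $O(t^{1/2})$, so choosing $t_0$ so that $C t_0^{1/2} < \delta/2$ keeps everything inside the region where Lemma~\ref{l:int-moment-simplex} applies and the Jacobian lower bound (\ref{eqn:det}) holds.

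\textbf{Main obstacle.} The genuinely delicate point is not the algebra of the binomial expansion but making the inverse-function step quantitative and \emph{uniform}: one must verify that the map $\pmb m \mapsto \pmb m_p(\nu(\pmb m))$ supplied by Lemma~\ref{l:int-moment-simplex} has a derivative that is not merely invertible but has operator norm of its inverse bounded uniformly on the whole $\delta$-ball (not just at the center), so that the $O(t^{1/2})$ defect can be absorbed; this is where the uniformity statement at the end of the proof of Lemma~\ref{l:int-moment-simplex} — the lower bound on $\det(JJ^*)$ and the upper bounds on $\|JJ^*\|$ and on second derivatives — is exactly what is needed. Everything else is routine.
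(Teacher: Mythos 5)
Your proposal is correct, but it takes a heavier route than the paper for the key existence step. Both proofs begin the same way (the $j=1,2$ moments force $m_1(\nu)=0$, $m_2(\nu)=1$; the remaining moments of $\nu$ must be perturbed to compensate the Gaussian contamination; in the end Lemma~\ref{l:int-moment-simplex} is invoked to realize the target moment vector by a measure coinciding with $\mu$ outside $\{|x|\le A+1\}$). Where you diverge is in how the target moment vector $\pmb m$ solving $\pmb m_p(X_t)=\pmb m_p(\mu)$ is produced: you set up a fixed-point problem $F_t(\pmb m)=\pmb m_p(\mu)$ and invoke a contraction-mapping or quantitative inverse-function argument, using that $F_0=\mathrm{id}$ and $F_t-\mathrm{id}=O(t^{1/2})$. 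The paper instead observes that the system is \emph{triangular}: the equation for $m_k(X_t)$ involves only $m_1(\nu),\dots,m_k(\nu)$ with the coefficient of $m_k(\nu)$ equal to $(1-t)^{k/2}\ne 0$, so one can solve for $m_k(\nu)$ explicitly and uniquely by the recursion (\ref{k-constraint}). This avoids any fixed-point machinery and gives continuity of $\pmb m(t)$ in $t$ for free, which is all that is needed to place $\pmb m(t)$ inside the $\delta$-ball of Lemma~\ref{l:int-moment-simplex} for $t\le t_0$. I would also flag that your ``main obstacle'' paragraph is slightly misdirected: the map $\pmb m\mapsto\pmb m_p(\nu(\pmb m))$ supplied by Lemma~\ref{l:int-moment-simplex} is the identity by construction, so there is no Jacobian to control there; the uniform bounds on $\det(JJ^*)$ etc.\ in Lemma~\ref{l:int-moment-simplex} are needed to guarantee a $\nu$ exists with the prescribed moments (and that $\delta$ can be chosen uniformly), whereas the invertibility needed for your fixed point comes entirely from $F_t$ being an explicit polynomial map close to the identity, which is elementary. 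Your proof would compile, but the paper's recursion is the cleaner path.
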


\begin{proof}
		By a change of variables, without loss of generality we can  look for $ X$ such that
$
				 X_t = (1 - t)^{1/2}  X + t^{1/2} G
$
has its first $p$ moments match exactly those of $\mu$.
Denote $\pmb{m}_p=(m_1(X),\dots,m_p(X))$ any sequence of moments ensuring this moment matching.
		Then for the first two moments, we wish to satisfy
		\begin{align*}
				0 &= m_1( X_t) = (1 - t)^{1/2} m_1 ( X) \\
				1 &= m_2( X_t) = (1-t) m_2 ( X) + t.
		\end{align*}
		So necessarily $m_1( X) = 0, m_2( X) = 1$.  Now suppose there is a choice of $m_j( X)$ that makes (\ref{moment-constraint}) hold for all $j \leq k-1$.  We show that there is also a unique choice of $m_{k}( X)$ which additionally ensures $m_{k}( X_t) = m_k$.   Necessarily
		\begin{align}
				m_k(\mu) = m_k( X_t) &= \sum_{j=0}^k \binom{k}{j} (1-t)^{ \frac{k-j}{2} } t^{ \frac{j}{2} } m_{k-j}( X) m_{j}(G) 
		\end{align}
		where we set $m_0( X) = m_0(G) = 1$.  This gives
		\begin{equation}\label{k-constraint}
				m_k( X) = \frac{m_k(\mu) - \sum_{j=1}^k \binom{k}{j} (1 - t)^{ \frac{k-j}{2}	} t^{ \frac{j}{2} } m_{k-j}( X) m_{j}(G)}{(1-t)^{k/2}}.
		\end{equation}
		Consider $\delta$ from Lemma \ref{l:int-moment-simplex}.  By continuity in $t$ of the above construction of $\pmb{m}_p$, there exists a $t_0=t_0(\delta,\pmb{m}_p(\mu))$ such that for any $t<t_0$ we have 
		$|\pmb{m}_p-\pmb{m}_p(\mu)|<\delta$.   Lemma \ref{l:int-moment-simplex} implies the existence of a $\nu$ as desired such that $\pmb{m}_p=\pmb{m}_p(\nu)$, concluding the proof.
\end{proof}

\subsection{Green function comparison via moment matching}
We use a superscript to denote the underlying matrix distribution of the object in question. In what follows, we use $\bf{v}/\sqrt{N},\bf{w}/\sqrt{N}\in\mathbb{R}^{\frac{N(N+1)}{2}}$ to denote the entrywise distributions of two generalized Wigner ensembles $H^{\bf{v}}$ and $H^{\bf{w}}$, respectively, which we assume to be in the real symmetry class: The proof for the Hermitian symmetry class is the same up to notational changes.

In comparing the resolvents between the $\bf{v}$ and $\bf{w}$ ensembles, we proceed by a telescopic expansion driven by a Lindeberg exchange principle, as initiated for random matrix universality in \cite{TaVu11}. 
As is customary with this swapping method in random matrix theory,  we fix a bijective ordering map of the index set of our matrix, $\Phi: \{(i,j): 1 \leq i \leq j \leq N\} \to \{1, \ldots, \ga(N)\}$ where $\ga(N) = N(N+1)/2$. Denote by $H_\ga$ the generalized Wigner matrix whose entries $H_{ij}$ follow the $\tb{w}/\sqrt{N}$-distribution if $\Phi(i,j) \leq \ga$ and the $\tb{v}/\sqrt{N}$-distribution otherwise. Then $H_0 = H^{\tb{v}}$ and $H_{\ga(N)} = H^{\tb{w}}$. We will denote 
$\mathbb{P}_\gamma,\mathbb{E}_\gamma$ when integrating over the ensemble $H_\gamma$.

For the following notation, fix a $\ga = \Phi(a,b)$, and let $D^{(ij)}$ denote the matrix that is zero everywhere except at position $(i,j)$ and $(j,i)$ where it is 1. Then we have
\begin{equation}
		H_{\ga - 1} = Q + \frac{v_{ab}}{\sqrt{N}}  D^{(ab)}, \ 
		H_\ga = Q + \frac{w_{ab}}{\sqrt{N}}  D^{(ab)}\label{eqn:gamma0}.
\end{equation}

To move between the Green function and the gap probabilities, we introduce some notation regarding the Green function approximation.    Let $\rho > 0$ be fixed and $\rho N \leq k \leq (1- \rho)N$ be a bulk index, and define the rigidity window
\begin{equation}\label{Ia}
		I_k := \l[ \ga_k - N^{-1} \vphi^C, \ga_k + N^{-1} \vphi^C \r],
\end{equation}
where the $k$-th typical location $\ga_k$ is defined implicitly through $\int_{-\infty}^{\ga_k} d\rsc = \frac{k}{N}$. 
We now 
fix 
\[
\varepsilon > 0, \ \  \eta_d = N^{-3/2+\varepsilon},\ \ \eta_0=N^{-3/2+2 \varepsilon},\ \ \Delta=N^{-3/2+3\varepsilon}.\]
  For the proof of the theorem  below,  we also set 
\[
s(x)=\frac{1}{1+e^{x}}
\]
and for any $E \in [-3, 3]$
\begin{equation}\label{eqn:fE}
f_E(x)=s\left(\frac{x-E}{\eta_d}\right).
\end{equation}
Then $f_E(x)$ is a good approximation of $\1_{x\leq E}$ on scale $\eta_d$,  see Lemma \ref{smoothing-approx}.
Note that $f_E$ is an analytic function with simple poles at $z_k=E+{\rm i}\eta_d(2k+1)$,  $k\in\mathbb{Z}$.
Moreover,  we will use the abbreviation $\Tr_E = \Tr f_E(H)$. 

Finally,  let $q_\a$ be a smooth, monotone cutoff function satisfying
\begin{equation}\label{qa}
		q_\a(x) = \begin{cases}
				1 & x \geq \a - 1/3 \\
				0 & x \leq \a - 2/3	
		\end{cases}.
\end{equation}

\begin{theorem}\label{4-moment-thmbis}
		Let $\varepsilon, \a > 0$ be fixed. Suppose that the $\bf{v}$ and $\bf{w}$ matrix entries have matching moments up to the $p$-th order, and that $\bf{w}$ is a Gaussian-divisible distribution as in (\ref{GDE-form}) with $t\asymp 1$.  Then there exists $C>0$ such that   uniformly in $\a N \leq k \leq (1 - \a) N$ and $0<u<N^{\varepsilon}$, we have
		\begin{equation}\label{4-moment-estbis}
				\left|(\P^{\tb{v}} - \P^{\tb{w}})  (N (\la_{k+1} - \la_{k})\geq u) \right|\leq \varphi^C N^2 \frac{N^{-(p+1)/2}}{(N\h_d)^{p+1}}+N^{1+6\varepsilon}\eta_d,\ \ \eta_d=N^{-3/2+\varepsilon}.
		\end{equation}
\end{theorem}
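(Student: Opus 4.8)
The plan is to move from the event $\{N(\lambda_{k+1}-\lambda_k)\ge u\}$, which depends on eigenvalues on the microscopic scale $N^{-1}$, down to a quantity expressible through the resolvent at scale $\eta_d = N^{-3/2+\varepsilon}$, and there apply a high-moment Green function comparison. The first step is a \emph{gap-counting identity}: for a bulk index $k$, the event $N(\lambda_{k+1}-\lambda_k)\ge u$ is equivalent to saying that the interval $(\gamma_k - \text{(small)}, \gamma_k + u/N + \text{(small)})$ contains exactly one eigenvalue, which via rigidity (Theorem~\ref{rigidity}) can be localized to an $O(N^{-1}\varphi^C)$-window $I_k$. So up to an error of $N^{-D}$ from rigidity, $\P(N(\lambda_{k+1}-\lambda_k)\ge u)$ is a sum over choices of the window-endpoints $E$ of probabilities of the form $\P(\mathcal N(E) = \mathcal N(E') = 1$, no eigenvalue in between$)$, where $\mathcal N(E) = |\{i:\lambda_i\le E\}| = \Tr \1_{(-\infty,E]}(H)$. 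The point of introducing $f_E$ and $\Tr_E = \Tr f_E(H)$ is that $\Tr_E$ is a smoothed version of $\mathcal N(E)$, accurate on scale $\eta_d$ (Lemma~\ref{smoothing-approx}), and $f_E$ is analytic with known poles $z_k = E + \i\eta_d(2k+1)$, so by residue calculus $\Tr_E$ can be written as a contour integral of $\Tr(H-z)^{-1}$ along lines at distance $\asymp \eta_d$ from the real axis. Thus a smoothed version of the gap event becomes a bounded smooth functional of resolvents evaluated at spectral parameter with imaginary part $\asymp \eta_d$.

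Next I would set up the telescopic Lindeberg swap along $H_0 = H^{\tb v}, H_1, \dots, H_{\gamma(N)} = H^{\tb w}$, writing $(\P^{\tb v} - \P^{\tb w})(\cdots) = \sum_{\gamma} (\E_{\gamma-1} - \E_\gamma)[F(\Tr_{E_1}, \Tr_{E_2}, \dots)]$ where $F$ is the smooth approximation (built from $q_\alpha$ and $s$) of the indicator of the gap-and-counting event. For each $\gamma = \Phi(a,b)$ we use the resolvent expansion $H_{\gamma-1} = Q + N^{-1/2} v_{ab} D^{(ab)}$ versus $H_\gamma = Q + N^{-1/2} w_{ab} D^{(ab)}$, expand $(H_{\gamma-1}-z)^{-1}$ as a finite Neumann-type series in the rank-two perturbation $N^{-1/2} v_{ab} D^{(ab)}$ around $(Q-z)^{-1}$, and likewise for $w_{ab}$. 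Because the $\tb v$ and $\tb w$ entries match to $p$-th moment, all terms of order $\le p$ in the expansion of $\E_{\gamma-1}[\cdots] - \E_\gamma[\cdots]$ cancel, leaving a remainder starting at order $p+1$. Each extra factor of the perturbation costs $N^{-1/2}$ (from $\E|v_{ab}|^{p+1} = O(N^{-(p+1)/2})$, using the subexponential decay \eqref{a:exp-decay}) but gains a resolvent entry, and the worst-case bound on a resolvent entry at imaginary part $\eta_d$ is $(N\eta_d)^{-1}$ per off-diagonal factor via the local law (Theorem~\ref{SLSL}) — more precisely the derivatives of $F$ with respect to $\Tr_E$, together with the contour integral representation of $\Tr_E$ in terms of $\Tr(H-z)^{-1}$ with $|\Im z|\asymp\eta_d$, produce the factor $(N\eta_d)^{-(p+1)}$. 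Summing over the $\gamma(N) \asymp N^2$ swaps gives the first term $\varphi^C N^2 \frac{N^{-(p+1)/2}}{(N\eta_d)^{p+1}}$.

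Finally there is the cost of the smoothing itself: replacing the sharp counting function $\mathcal N(E) = \Tr\1_{(-\infty,E]}(H)$ by $\Tr_E = \Tr f_E(H)$ introduces an error because $f_E$ differs from $\1_{x\le E}$ on a window of width $\asymp \eta_d$ around $E$, which by rigidity contains $\asymp N\eta_d$ eigenvalues; propagating the $N^{6\varepsilon}$ losses coming from the width of $I_k$ (there are $\asymp \varphi^C$ choices of endpoint, and the smoothing/cutoff scales $\eta_0, \Delta$ each sit at distance $N^\varepsilon$ from $\eta_d$) yields the additive term $N^{1+6\varepsilon}\eta_d$. I expect the main obstacle to be the bookkeeping of the high-order resolvent expansion: unlike the classical four-moment theorem where one expands to fixed order $4$, here $p$ is large, so one must control a telescoping sum with $O(p)$-fold products of resolvent entries and verify that the naive power-counting bound $(N^{-1/2})^{p+1}(N\eta_d)^{-(p+1)}$ genuinely holds uniformly — in particular that the smooth functional $F$ and all its derivatives up to order $p+1$ are bounded (which is where the boundedness and smoothness of $s$ and $q_\alpha$, and the analyticity of $f_E$ with well-separated poles, are essential), and that no term in the expansion is secretly larger because of a diagonal resolvent entry of size $O(1)$ rather than $O((N\eta_d)^{-1})$. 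A careful but standard argument using $\Im z \asymp \eta_d$ and the Ward identity $\sum_j |G_{ij}|^2 = \Im G_{ii}/\Im z$ handles this, but it is the technical heart of the proof.
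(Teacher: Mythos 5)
Your proposal correctly identifies the Green function comparison skeleton: decompose the gap event over mesh points near $\gamma_k$, smooth the counting function via $\Tr_E = \Tr f_E(H)$ using the residue/contour representation of $f_E$ with poles $z_k = E + \i\eta_d(2k+1)$, set up the telescopic Lindeberg swap, Taylor-expand to order $p$, and bound the order-$(p+1)$ remainder with overwhelming probability via the resolvent expansion and the Ward identity. That part matches the paper.

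There are two genuine gaps.

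First, after decomposing $\P^{\tb v}(N(\lambda_{k+1}-\lambda_k)\geq u)$ into terms of the form $\E[\1_{\lambda_k\in[\beta_j,\beta_{j+1})}(1-q_{k+1}(\Tr_{\beta_j+u/N-\eta_0}))]$, the indicator $\1_{\lambda_k\in[\beta_j,\beta_{j+1})}$ is \emph{not} yet a smooth functional of the matrix entries, so the high-order Taylor expansion cannot be applied. The paper resolves this by Abel summation over $j$: rewriting the sum so that only one-sided indicators $\1_{\lambda_k\geq\beta_j}$ appear, which are then replaced by $1-q_k(\Tr_{\beta_j+\eta_0})$ via Lemma~\ref{smoothing-approx}. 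The observable on which the Lindeberg swap acts is then a product of \emph{two} smooth $q$-functionals, $(1-q_k(\Tr_{\beta_j+\eta_0}))\bigl(q_{k+1}(\Tr_{\beta_{j-1}+u/N-\eta_0})-q_{k+1}(\Tr_{\beta_j+u/N-\eta_0})\bigr)$. Your proposal says $F$ is "built from $q_\alpha$ and $s$" but never explains how the two-sided localization of $\lambda_k$ becomes smooth; the Abel summation is the step that makes this work.

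Second, and more importantly, you misattribute the term $N^{1+6\varepsilon}\eta_d$. It is not the error from replacing $\mathcal N(E)$ by $\Tr_E$; that error is $e^{-N^{\varepsilon/2}}$ (Lemma~\ref{smoothing-approx}), essentially negligible. The real source is the cost, after the swap, of undoing the smoothing: replacing $\1_{\lambda_k\geq\beta_j+2\eta_0}$ back by $\1_{\lambda_k\geq\beta_j}$ produces an error of the form $\sum_j\E^{\tb w}[\1_{\lambda_k\in[\beta_j,\beta_j+2\eta_0)}\1_{\lambda_{k+1}\in J_j}]$ with $|J_j|\asymp\Delta$. Bounding this requires a two-point level-repulsion estimate $\rho_2\lesssim N^2$, which gives $\sum_j(N\Delta)^2\asymp N^{-1/2+7\varepsilon}=N^{1+6\varepsilon}\eta_d$. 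That density bound is available explicitly for GOE (via the determinantal kernel), \emph{not} a priori for $\tb w$; the paper invokes the relaxation estimate~(\ref{e:gaps-relaxation}) precisely to couple the Gaussian-divisible $\tb w$ spectrum to a GOE spectrum, and this is where the hypothesis that $\tb w$ is Gaussian-divisible with $t\asymp1$ is actually used inside this theorem. Your proposal never mentions the relaxation estimate and hence has no mechanism to control this error term; without it, the argument does not close.
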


\begin{proof}
		Fix $\a N \leq k \leq (1 - \a)N$ and let $\b_j = \ga_{k} + j\D$ for $j_- \leq j \leq j_+$ where $j_\pm = \pm \lfloor N^{1/2 + \varepsilon} \rfloor$. We start by decomposing and applying (\ref{smoothing-est}): together with the rigidity estimate (\ref{eqn:rigidity}) we have
		\begin{align*}
				\P^{\bf{v}}(N(\la_{k+1} - \la_k) \geq u) &\leq \sum_{j_- \leq j \leq j_+} \P^{\bf{v}}\l( \la_k \in [\b_j, \b_{j+1}), \la_{k+1} \in [\b_j + \frac{u}{N}, \infty)  \r)  +{\rm O}(N^{-D})\\
				&\leq \sum_{j_- \leq j \leq j_+} \E^{\bf{v}} \l[ \1_{\la_k \in [\b_j, \b_{j+1})}\l( 1 -  q_{k+1} (\Tr_{\b_j + \frac{u}{N} - \h_0} \r)  \r]+{\rm O}(N^{-D}).
		\end{align*}
		Integrating the above sum by parts, we have
		\begin{multline*}
				\P^{\bf{v}}(N(\la_{k+1} - \la_k) \geq u) \leq \sum_{j_- + 1 \leq j \leq j_+} \E^{\bf{v}} \l[ \1_{\la_k \geq \b_j} \l( q_{k+1}(\Tr_{\b_{j-1} + \frac{u}{N} - \h_0}) - q_{k+1}(\Tr_{\b_{j} + \frac{u}{N} - \h_0}) \r) \r]+{\rm O}(N^{-D})\\
				+ \E^{\bf{v}} \l[ \1_{\la_k \geq \b_{j_-}}\l( 1 -  q_{k+1}(\Tr_{\b_{j_-} + \frac{u}{N} - \h_0})\r) \r] - \E^{\bf{v}} \l[ \1_{\la_k \geq \b_{j_+}} \l( 1 - q_{k+1}(\Tr_{\b_{j_+} + \frac{u}{N} - \h_0}) \r) \r]+{\rm O}(N^{-D}).
		\end{multline*}
		The boundary terms simplify by rigidity, and we apply (\ref{smoothing-est}) to see that (note that the difference in the $q_{k+1}$ terms is negative)
		\begin{multline*}
				\P^{\bf{v}}(N(\la_{k+1} - \la_k) \geq u) \\
				\leq 1 + \sum_{j_- + 1 \leq j \leq j_+} \E^{\bf{v}} \l[ \1_{\la_k \geq \b_j} \l( q_{k+1}(\Tr_{\b_{j-1} + \frac{u}{N} - \h_0}) - q_{k+1}(\Tr_{\b_{j} + \frac{u}{N} - \h_0}) \r) \r] + O(N^{-D}) \\
				\leq 1 + \sum_{j_- + 1 \leq j \leq j_+} \E^{\bf{v}} \l[ \l( 1 - q_k(\Tr_{\b_j + \h_0}) \r) \l( q_{k+1}(\Tr_{\b_{j-1} + \frac{u}{N} - \h_0}) - q_{k+1}(\Tr_{\b_{j} + \frac{u}{N} - \h_0}) \r) \r] + O(N^{-D}) .
		\end{multline*}
We now claim that due to our moment matching hypothesis,  for any entry swap at site $\gamma$ we have
almost distributional invariance in the following sense:
		\begin{multline}\label{eqn:swapp}
				\E_{\gamma-1} \l[ \l( 1 - q_k(\Tr_{\b_j + \h_0}) \r) \l( q_{k+1}(\Tr_{\b_{j-1} + \frac{u}{N} - \h_0}) - q_{k+1}(\Tr_{\b_{j} + \frac{u}{N} - \h_0}) \r) \r] \\
				= \E_{\gamma} \l[ \l( 1 - q_k(\Tr_{\b_j + \h_0}) \r) \l( q_{k+1}(\Tr_{\b_{j-1} + \frac{u}{N} - \h_0}) - q_{k+1}(\Tr_{\b_{j} + \frac{u}{N} - \h_0}) \r) \r] + {\rm O}\left( \varphi^C \frac{N^{-(p+1)/2}}{(N\h_d)^{p+1}}\right).
		\end{multline}
		To prove the above equation,  we consider (we abbreviate $q=q_k$,  $D=D^{ab}$)
		\[F(x)= q_{k}(\Tr_{E}(Q +x D))=q\circ h(x).\]
		We will prove that uniformly in $a,b, k$ and $E$ in the bulk of the spectrum
		we have
		\begin{equation}\label{eqn:invar}
		\mathbb{E}[F(\frac{v_{ab}}{\sqrt{N}})]=
		\mathbb{E}[F(\frac{w_{ab}}{\sqrt{N}})]+{\rm O}\left( \varphi^C \frac{N^{-(p+1)/2}}{(N\h_d)^{p+1}}\right).
		\end{equation}
		The case of the product of two functions of type $q_k$ can be proved similarly, concluding the proof of (\ref{eqn:swapp}).

To prove (\ref{eqn:invar}),  abbreviating $v=v_{ab}, w=w_{ab}$, we have
		\begin{equation}\label{eqn:F}
		\E F(\frac{v}{\sqrt{N}})=\sum_{\ell=0}^p	\frac{1}{\ell!}\E[F^{(\ell)}(0)]\E[\frac{v^k}{N^{k/2}}]+{\rm O}_{p}\left(\frac{\E[\|F^{(p+1)}\|_\infty]}{N^{(p+1)/2}}\right),
		\end{equation}
		where the notation for the norm used above and in the remainder of this paper is: for any function $g$ we denote \[
		\|g\|_{\infty}=\sup_{|x|<N^{-\frac{1}{2}+\kappa}}|g(x)|
		\] with $0<\kappa<1/2$ fixed, arbitrarily small ($\kappa=\varepsilon/100$ for example works).  Therefore,  from the moment matching hypothesis, and the exponential decay assumption (\ref{a:exp-decay}), the proof of
	 (\ref{eqn:invar}) will be complete if we have the key stability estimate
		\begin{equation}\label{eqn:Stab}
		\|F^{(p+1)}\|_{\infty}\leq C_p \frac{\varphi^C}{(N\eta_d)^{p+1}}
		\end{equation}
		with overwhelming probability.  As $q$ is smooth on scale $1$, by the chain rule to prove (\ref{eqn:Stab}) we only need to prove that for any $\varepsilon>0$ and $\ell\leq p$,  with overwhelming probability
		\begin{equation}\label{eqn:derBound}
		\|h^{(\ell)}\|_{\infty}\leq C_\ell \frac{\varphi^C}{(N\eta_d)^\ell},
		\end{equation}
First note that with overwhelming probability,  for any $z=E+{\rm i}\eta$ with $0<\eta\lesssim 1$  we have 
		\begin{equation}\label{eqn:est}
		\left\|\partial_x^\ell{\rm Tr}\frac{1}{Q-z+x D}\right\|_{\infty}\leq  \frac{C_\ell}{\eta} \frac{\varphi^C}{(\min(1,N\eta))^\ell}.
		\end{equation}
		We start the proof of (\ref{eqn:est}) in the most singular case $0<\eta<1/N$.
		From the resolvent expansion we have
				\begin{multline}\label{eqn:expansion}
				\frac{1}{Q-z+(x+\delta) D} \\= \frac{1}{Q-z+x D}  + \l[ \sum_{k=1}^{\ell} (- \frac{1}{Q-z+x D} \delta D)^k  \frac{1}{Q-z+x D}  \r] + (- \frac{1}{Q-z+x D} \delta D)^{\ell+1}  \frac{1}{Q-z+(x+\delta) D}
		\end{multline}
		so that
		\[
		\partial_x^\ell{\rm Tr}\frac{1}{Q-z+x D}=\ell!\ {\rm Tr} \left(- \frac{1}{Q-z+x D} D\right)^\ell  \frac{1}{Q-z+x D}.
		\]
		The estimate (\ref{eqn:est}) then follows from the above formula together with the local law (\ref{SLSL}) (see in particular (\ref{eqn:Extend})) and the Ward identity $\sum_j|G_{ij}|^2=\frac{1}{\eta}{{\rm Im} G_{ii}}$ where we abbreviate $G=(Q-z+xD)^{-1}$:
		\begin{multline*}
		\left|\partial_x^\ell{\rm Tr}\frac{1}{Q-z+x D}\right|\leq  \ell!|{\rm Tr}(GD)^{\ell-1} G|\leq \sum_{1\leq i\leq N,j,k\in\{a,b\}}\ell!|{\rm Tr}(GD)^{\ell-1} G|\\
		\leq \sum_{j,k\in\{a,b\}}
		|[D(GD)^{\ell-1}]_{jk}|\sum_{i=1}^N(|G_{ij}|^2+|G_{ki}|^2)\leq \frac{1}{\eta}
		\sum_{j,k\in\{a,b\}}
		|[D(GD)^{\ell-1}]_{jk}|(|G_{jj}|+|G_{kk}|)\leq \frac{C_\ell}{\eta}\frac{\varphi^{C}}{(N\eta)^\ell}.
		\end{multline*}
		
		The proof of (\ref{eqn:est}) in the case $1/N<\eta\lesssim 1$ is the same, except that we use the bound $G_{ij}={\rm O}(\varphi^C)$ instead of $G_{ij}={\rm O}(\varphi^C/(N\eta))$ as an input, for the resolvent entries in the above expansion.
		Moreover,  the above reasoning, with (\ref{eqn:far}) and (\ref{eqn:far2}) as an input. also shows that for any $c>0$, uniformly in ${\rm dist}(z,[-2,2])>c$ we have
				\begin{equation}\label{eqn:est2}
		\left\|\partial_x^\ell{\rm Tr}\frac{1}{Q-z+x D}\right\|_{\infty}\leq C(\ell) \varphi^{C}.
		\end{equation}
	
	Let $\cal C$ is a circle with radius $R\in[5,6]$ such that ${\rm dist}(\cal C,\{z_k\})<\eta_d/10$, ensuring that $f=f_E$ from (\ref{eqn:fE}) satisfies $|f|={\rm O}(1)$ on $\cal C$.	
By the residue formula for any $|\lambda|<4$ we have (denoting $\cal D$ the disk with boundary $\cal C$)
		\[
		f(\lambda)=\int_{\mathcal{C}}\frac{f(z)}{z-\lambda}\frac{{\rm d}z}{2{\rm i}\pi}-\sum_{z_k\in\mathcal D}\frac{{\rm Res}(f,z_k)}{z_k-\lambda}=
		\int_{\mathcal{C}}\frac{f(z)}{z-\lambda}\frac{{\rm d}z}{2{\rm i}\pi}+\eta_d\sum_{z_k\in\mathcal D}\frac{1}{z_k-\lambda}.
		\]
		The above formula and eigenvalue rigidity (see Theorem \ref{rigidity}) imply
		\[
		|\partial_x^\ell {\rm Tr}_{E}(Q+xD)|\lesssim \sup_{\cal C}\left|\partial_x^\ell{\rm Tr}\frac{1}{Q-z+x D}\right|+\sum_{z_k\in{\cal D}}\left|\partial_x^\ell{\rm Tr}\frac{1}{Q-z_k+x D}\right|.
		\]
		Together with (\ref{eqn:est}) to estimate the contribution from the $z_k$'s, and (\ref{eqn:est2}) for the contribution of $\cal C$,  we obtain
		\begin{multline*}
		|\partial_x^\ell {\rm Tr}_{E}(Q+xD)|\lesssim \sup_{\cal C}\left|\partial_x^\ell{\rm Tr}\frac{1}{Q-z+x D}\right|+\eta_d\sum_{z_k\in{\cal D}}\frac{\varphi^C}{(2k+1)\eta_d(N(2k+1)\eta_d)^\ell}
		+\eta_d\sum_{z_k\in{\cal D}}\frac{\varphi^C}{(2k+1)\eta_d}\\
		\lesssim C_\ell
		 \frac{\varphi^C}{(N\eta_d)^\ell},
		\end{multline*}
		concluding the proof of (\ref{eqn:derBound}). 
		
With (\ref{eqn:swapp})  proved,  we now swap the expectation in $\bf{v}$ to an expectation in $\bf{w}$, with an error $\vphi^C N^{2} \frac{N^{-(p+1)/2}}{(N\h_d)^{p+1}}$. To continue, we apply (\ref{smoothing-est}) to yield
		\begin{multline*}
				\P^{\bf{v}}(N(\la_{k+1} - \la_k) \geq u) \\
				\leq 1 + \sum_{j_- + 1 \leq j \leq j_+} \E^{\bf{w}} \l[ \l( 1 - q_k(\Tr_{\b_j + \h_0}) \r) \l( q_{k+1}(\Tr_{\b_{j-1} + \frac{u}{N} - \h_0}) - q_{k+1}(\Tr_{\b_{j} + \frac{u}{N} - \h_0}) \r) \r] \\
				\qquad\qquad\qquad\qquad\qquad\qquad\qquad\qquad + N^2 \frac{N^{-(p+1)/2}}{(N\h_d)^{p+1}} + O(N^{-D})  \\
				\leq 1 + \sum_{j_- + 1 \leq j \leq j_+} \E^{\bf{w}} \l[ \1_{\la_k \geq \b_j + 2\h_0} \l( q_{k+1}(\Tr_{\b_{j-1} + \frac{u}{N} - \h_0}) - q_{k+1}(\Tr_{\b_{j} + \frac{u}{N} - \h_0}) \r) \r] \\
				 + \varphi^C N^2 \frac{N^{-(p+1)/2}}{(N\h_d)^{p+1}} + O(N^{-D}).  
		\end{multline*}
		We may replace $\1_{\la_k \geq \b_j + 2\h_0}$ with $\1_{\la_k \geq \b_j}$ and estimate the error  as follows,
		\begin{multline*}
				\sum_{j_- + 1 \leq j \leq j_+} \E^{\bf{w}} \l[ \1_{\la_k \in [\b_j, \b_j + 2\h_0)} \l| q_{k+1}(\Tr_{\b_{j-1} + \frac{u}{N} - \h_0}) - q_{k+1}(\Tr_{\b_{j} + \frac{u}{N} - \h_0}) \r| \r] \\
				\leq \sum_{j_- + 1 \leq j \leq j_+} \E^{\bf{w}} \l[ \1_{\la_k \in [\b_j, \b_j + 2\h_0)} \1_{\la_{k+1} \in [\b_{j-1} + \frac{u}{N} - 2\h_0, \b_j + \frac{u}{N})} \r] \
				= \E^{\bf{w}} \l[ \sum_{j_- + 1 \leq j \leq j_+} \1_{A_j} \r] = \E^{\bf{w}} \l[ \1_{A} \r] 
		\end{multline*}
		where we denoted $A = \sqcup_{j_- + 1 \leq j \leq j_+} A_j$ as the union of disjoint events $A_j$. We may now input the relaxation estimate (\ref{e:gaps-relaxation}), denoting the event
		\begin{equation}\label{eqn:E}
				E := \l\{ |(\la_{k+1}(t) - \la_k(t)) - (\mu_{k+1}(t) - \mu_k(t))| \leq \frac{N^\varepsilon}{N^2 t} \r\},
		\end{equation}
		where $\la_k$  follows a Gaussian divisible $\bm w$-distribution at time $t \sim 1$ and $\mu_k$ follows that of a GOE spectrum.  In particular, write
\[
				\E^{\bf{w}} \l[ \1_A \r] \leq \E^{\bf{w}} \l[ \1_{A \cap E} \r] + O(N^{-D}) 
										 \leq \E^{\bf{w}} \l[ \sum_{j_- + 1 \leq \ell \leq j_+} \1_{A \cap E \cap B_\ell}  \r] + O(N^{-D})
\]
		where $B_\ell := \{\mu_k \in [\b_\ell, \b_{\ell + 1}) \}$. Note that for time $t \sim 1$, the intersection $A \cap E \cap B_\ell$ implies that $\mu_{k+1} \in [\b_{\ell - 1} - 4\h_0 + \frac{u}{N} - \frac{c}{N^2}, \b_{\ell+1} + \frac{u}{N} + \frac{c}{N^2}) =: J_{\ell}$ for some constant $c > 0$. Therefore the last expectation boils down to a two-point correlation function estimate for the GOE: from \cite{Meh}*{Chapter 7} we have 
		$\rho^N_2(x,y)\lesssim N^{2}$ uniformly in $x,y$ in the bulk of the spectrum, so that $\mathbb{P}^{\t{GOE}}(B_\ell \cap J_\ell)\lesssim N^2\Delta^2$ and 
		\begin{equation}\label{e:wegner}
				\E^{\bf{w}} \l[ \sum_{j_- + 1 \leq \ell \leq j_+} \1_{A \cap E \cap B_\ell} \r] \leq \sum_{j_- + 1 \leq \ell \leq j_+} \E^{\t{GOE}} \l[ \1_{B_\ell \cap J_\ell} \r] 					\lesssim \sum_{j_- + 1 \leq \ell \leq j_+} (N\D)^2 \leq N^{-\frac{1}{2}+7\varepsilon}.
		\end{equation}
		So after replacing $\1_{\la_k \geq \b_j + 2\h_0}$ with $\1_{\la_k \geq \b_j}$, we integrate by parts again, apply rigidity, and (\ref{smoothing-est}) to get that
		\begin{multline*}
				\P^{\bf{v}}(N(\la_{k+1} - \la_k) \geq u) \\
				 \leq 1 + \sum_{j_- + 1 \leq j \leq j_+} \E^{\bf{w}} \l[ \1_{\la_k \geq \b_j} \l( q_{k+1}(\Tr_{\b_{j-1} + \frac{u}{N} - \h_0}) - q_{k+1}(\Tr_{\b_{j} + \frac{u}{N} - \h_0}) \r) \r]  + N^C \frac{N^{-(p+1)/2}}{(N\h_d)^{p+1}} + O(N^{- \frac{1}{2} + 7\varepsilon})  \\
				 \leq \sum_{j_- \leq j \leq j_+} \E^{\bf{w}} \l[ \1_{\la_k \in [\b_j, \b_{j+1})} \l(1 -  q_{k+1}(\Tr_{\b_{j} + \frac{u}{N} - \h_0}) \r) \r] +  N^C \frac{N^{-(p+1)/2}}{(N\h_d)^{p+1}} + O(N^{- \frac{1}{2} + 7\varepsilon}) \\
				 \leq \sum_{j_- \leq j \leq j_+} \E^{\bf{w}} \l[ \1_{\la_k \in [\b_j, \b_{j+1})}  \1_{\la_{k+1} \geq \b_j + \frac{u}{N} - 2\h_0} \r] +  N^C \frac{N^{-(p+1)/2}}{(N\h_d)^{p+1}} + O(N^{- \frac{1}{2} + 7\varepsilon}) .
		\end{multline*}
		Now, by the two point estimate as in  (\ref{e:wegner}),  and using that $E$ has overwhelming probability,  we have (remember $\Delta=N^{-3/2+3\varepsilon}$)
		\begin{multline*}
				\sum_{j_- \leq j \leq j_+} \E^{\bf{w}} \l[ \1_{\la_k \in [\b_j, \b_{j+1})}  \1_{\la_{k+1} \geq \b_j + \frac{u}{N} - 2\h_0} \r] \\
				\leq \P^{\bf{w}}(N(\la_{k+1} - \la_k) \geq u) + \sum_{j_- \leq j \leq j_+} \E^{\bf{w}} \l[ \1_{\la_k \in [\b_j, \b_{j+1})}  \1_{\la_{k+1} \in [\b_j + \frac{u}{N} - 2\h_0, \b_{j+1} + \frac{u}{N})} \r] \\
				\leq \P^{\bf{w}}(N(\la_{k+1} - \la_k) \geq u) + \sum_{j_- \leq j \leq j_+} O\l( (N\D)^2 \r) 
				\leq \P^{\bf{w}}(N(\la_{k+1} - \la_k) \geq u) + O \l( N^{- \frac{1}{2} + 7\varepsilon} \r).
		\end{multline*}
		Therefore we have shown
		\begin{equation}
				\P^{\bf{v}}(N(\la_{k+1} - \la_k) \geq u) \leq \P^{\bf{w}}(N(\la_{k+1} - \la_k) \geq u) + N^C \frac{N^{-(p+1)/2}}{(N\h_d)^{p+1}} + O(N^{- \frac{1}{2} + 7\varepsilon}).
		\end{equation}
		The proof of the reverse inequality proceeds similarly, starting with a lower bound on $\P^{\bf{v}}(N(\la_{k+1} - \la_k) \geq u)$ in terms of products of $q_\alpha({\rm Tr}_\beta)$,  then a moment matching to transfer to the ensemble $\bf{w}$,   and (\ref{eqn:E}) with 2-point estimates of GOE to adjust the size of the intervals by order $\eta_0$.
\end{proof}

\subsection{Proof of Theorem~\ref{t:gap-rate}}

If $H$ is a generalized Wigner matrix with uniform $p$-support for some $p$ to be chosen later, then Lemma~\ref{GDE-moment-matching} produces a matrix $\tilde H_0$ and a $t\asymp 1$ such that $\tilde H_t$ given by the matrix Ornstein Uhlenbeck process (\ref{e:DBM}) matches $p$ moments with $H$. Then via Theorem~\ref{4-moment-thmbis} and rigidity we have
\[
		d_{\t K} \l( N (\la_{k+1}^H - \la_k^H), N (\la_{k+1}^{\tilde H_t} - \la_k^{\tilde H_t}) \r) \leq \varphi^C N^2 \frac{N^{-(p+1)/2}}{(N\h_d)^{p+1}} + N^{1 +7 \varepsilon} \h_d.
\]
Combining this with the relaxation estimate (\ref{e:gaps-relaxation}) and the uniform (in $N$) bound on the density of $\mu_{k+1}-\mu_k$ (see  e.g. \cite{Meh}*{Chapter 7}) yields
\[
		d_{\t K} \l( N (\la_{k+1}^H - \la_k^H), N(\mu_{k+1} - \mu_k)  \r) \leq \varphi^C N^2 \frac{N^{-(p+1)/2}}{(N\h_d)^{p+1}} + N^{1 + 7\varepsilon} \h_d + \frac{N^{\varepsilon}}{N^2 t} + N^{- D}.
\]
Therefore,  with $\h_d = N^{- \frac{3}{2} + \varepsilon}$ and $p \geq \frac{10}{\varepsilon}$, say, we obtain (\ref{e:dK-single-gap}). \QED

\section{Universality and rate of convergence for the  smallest gap}%

\label{sec:Rate of convergence for smallest gap}

Before proving Theorem~\ref{t:min-gap-rate}, we need a concentration estimate which says that there cannot be too many eigenvalue gaps that live near the scale of the smallest gap.
\begin{lemma}\label{eqn:notmany}
		There exists a $p\geq 1$ such that for any  $\kappa,D > 0$ there exists a   $\d =\d(\kappa)> 0$ and $N_0 = N_0(\kappa)$ such that for $\bm{v}$ satisfying the uniform  $p$-support condition from Definition \ref{def:psup} and $N \geq N_0$ we have
		\begin{equation}\label{extreme-gap-stability}
				\P^{\bm{v}}\l(\# \l\{ \ga N \leq \a \leq (1 - \ga) N : \la_{\a+1} - \la_\a \leq N^{- \frac{4}{3} + \d}  \r\} > N^\kappa \r) < N^{-D}.
		\end{equation}
\end{lemma}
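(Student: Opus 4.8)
The plan is to prove the statement by a Green function comparison argument, reducing to the Gaussian case where the analogue is a direct consequence of known correlation function estimates. Concretely, fix the target scale threshold $N^{-4/3+\delta}$; I want to bound the probability that more than $N^\kappa$ bulk gaps are below this scale. The first step is to handle the Gaussian case: for a GOE or GUE matrix, using the $n$-point correlation function bounds $\rho_n^N(x_1,\dots,x_n) \lesssim N^n$ uniformly in the bulk (see \cite{Meh}*{Chapter 7}), the expected number of $n$-tuples of eigenvalues all lying within a window of size $\asymp N^{-4/3+\delta}$ is of order $N^n (N^{-4/3+\delta})^{n}\cdot N = N^{1 + n(-1/3+\delta)}$ — wait, more carefully: the expected number of indices $\alpha$ with $\lambda_{\alpha+1}-\lambda_\alpha \le N^{-4/3+\delta}$ is $O(N \cdot N \cdot N^{-4/3+\delta}) = O(N^{-1/3+\delta})$, which is $o(1)$ for $\delta$ small. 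For the count to exceed $N^\kappa$, one needs roughly $N^\kappa$ such small gaps, and by a Markov/moment computation on the $\lceil N^\kappa\rceil$-th factorial moment of this counting variable — which is controlled by $\rho_n^N \lesssim N^n$ integrated over the relevant small boxes — the probability is bounded by $(N^{-1/3+\delta})^{N^\kappa}$ up to combinatorial factors, hence superpolynomially (indeed stretched-exponentially) small, giving the $N^{-D}$ bound with room to spare.

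The second step is to transfer this from the Gaussian ensemble to the generalized Wigner ensemble $\bm v$ with uniform $p$-support. Here I would first run the three-step scheme: use Lemma~\ref{GDE-moment-matching} to produce a Gaussian-divisible ensemble $\tilde H_t$ with $t\asymp 1$ matching $p$ moments with $H^{\bm v}$; use the relaxation estimate (\ref{e:gaps-relaxation}) (in fact its version controlling all bulk gaps simultaneously, obtained by a union bound over $O(N)$ indices at the cost of an extra polynomial factor absorbed into $N^{-D}$) to compare the gaps of $\tilde H_t$ with those of GOE/GUE, so that the event in (\ref{extreme-gap-stability}) for $\tilde H_t$ is sandwiched between slightly enlarged/shrunk versions of the GOE/GUE event; and use a Green function comparison of the same flavor as Theorem~\ref{4-moment-thmbis} to pass from $\tilde H_t$ back to $H^{\bm v}$. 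The key observation that makes the comparison work is that the event ``$\#\{\dots\} > N^\kappa$'' can, after discretizing the bulk into $O(N^{3/2+\varepsilon})$ reference points $\beta_j$ at spacing $\Delta = N^{-3/2+3\varepsilon}$ as in the proof of Theorem~\ref{4-moment-thmbis}, be expressed (up to overwhelming-probability errors from rigidity) in terms of products of the smoothed-counting functions $q_\alpha(\mathrm{Tr}_{\beta_j})$; the stability estimate (\ref{eqn:Stab})–(\ref{eqn:derBound}) then gives the per-swap error $\varphi^C N^{-(p+1)/2}/(N\eta_d)^{p+1}$, and the total error over $N^2$ swaps is $\varphi^C N^{2-(p+1)/2}/(N\eta_d)^{p+1}$, which with $\eta_d = N^{-3/2+\varepsilon}$ and $p$ large is $N^{-D}$.

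One simplification relative to Theorem~\ref{4-moment-thmbis} is worth noting: since the conclusion (\ref{extreme-gap-stability}) already has a $o(1)$-type right-hand side $N^{-D}$ rather than a sharp rate, I do not need to optimize constants, and in particular the parameter $\delta = \delta(\kappa)$ can be chosen as small as needed — I should pick $\delta$ after $\kappa$ so that, on the Gaussian side, $N^\kappa \cdot (-1/3+\delta) < -D$ with a comfortable margin, e.g. $\delta < 1/6$ and $N_0$ large. The $p$ in the statement is the $p$ needed both for the uniform $p$-support moment-matching (Lemma~\ref{GDE-moment-matching}) and for the Green function comparison error to be $N^{-D}$; a fixed choice such as $p \ge 10$ suffices since $\kappa, D$ affect only $\delta$ and $N_0$, not $p$.

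The main obstacle I anticipate is organizing the Green function comparison for the \emph{count} of small gaps rather than for a single gap: one must write the indicator of ``at least $N^\kappa$ of the $\beta_j$-windows each contain two consecutive eigenvalues'' as a finite (but $N^\kappa$-fold) product/sum of functions of traces $\mathrm{Tr}_E(H)$ to which the one-entry-swap stability bound (\ref{eqn:Stab}) applies. The cleanest route is probably to avoid expanding the full count: instead, bound the probability in (\ref{extreme-gap-stability}) by the $\lceil N^\kappa\rceil$-th factorial moment $\mathbb{E}^{\bm v}[\,\#\{\dots\}(\#\{\dots\}-1)\cdots\,]$ via Markov, expand this moment into a sum over $N^\kappa$-tuples of indices $\alpha_1<\dots$ of probabilities of the form $\mathbb{P}^{\bm v}(\bigcap_i \{\lambda_{\alpha_i+1}-\lambda_{\alpha_i}\le N^{-4/3+\delta}\})$, and compare \emph{each} such joint probability — which after discretization is a product of at most $2N^\kappa$ factors $1-q_\alpha(\mathrm{Tr}_{\beta})$ — to its Gaussian counterpart by the swapping argument; the number of traces in the product is subpolynomial (indeed $N^\kappa$ with $\kappa$ small), so the chain rule and (\ref{eqn:Stab}) still give the same per-swap bound up to a harmless $N^{O(\kappa)}$ combinatorial factor. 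Summing the resulting Gaussian factorial moment then reproduces the stretched-exponential bound from the first step, and the comparison error stays $N^{-D}$.
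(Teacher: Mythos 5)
Your overall strategy---smoothed counting functions $Q_\alpha$ built from $q_\alpha(\Tr f_\beta)$, a Markov inequality for the counting random variable, Green function comparison to a Gaussian-divisible ensemble, relaxation to GUE, and a GUE-side correlation estimate---is the same skeleton as the paper's proof. The fatal divergence is the choice of moment order.

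You propose to bound $\P(X>N^\kappa)$ by the $\lceil N^\kappa\rceil$-th (factorial) moment of $X=\#\{\alpha:\lambda_{\alpha+1}-\lambda_\alpha\le N^{-4/3+\delta}\}$ and then compare term by term. That moment expands into a sum over roughly $\binom{N}{N^\kappa}\asymp \exp\bigl((1-\kappa)N^\kappa\log N\bigr)$ tuples, which is superpolynomial in $N$. The Lindeberg swap gives, per tuple, an error which after summing over the $\asymp N^2$ swaps is at best polynomially small (with $p$ fixed, bounded by $\varphi^C N^{2-(p+1)\varepsilon}$ in the most favorable reading). A superpolynomial number of tuples each contributing a polynomially small error destroys the bound, and nothing in your argument produces cancellation. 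Furthermore, your claim that the chain rule on a product of $N^\kappa$ factors only costs a ``harmless $N^{O(\kappa)}$'' factor is too optimistic: the $(p+1)$-st derivative of an $m$-fold product has $\sim m^{p+1}$ terms, so the per-swap error picks up $N^{\kappa(p+1)}$; with $\eta_d=N^{-3/2+\varepsilon}$ and $N^2$ swaps this gives $\varphi^C N^{2+(p+1)(\kappa-\varepsilon)}$, which is not small unless $\kappa<\varepsilon$---but $\kappa$ is an arbitrary fixed positive number in the statement. The paper avoids both problems by using a fixed moment order $r=r(\kappa,D)$ (so $\binom{N}{r}\lesssim N^r$ terms, polynomially many), accepting the merely polynomial bound $N^{-r\kappa+o(1)}$, and then relying on the key structural observation you do not use: since moving one entry by $O(N^{-1/2})$ moves each eigenvalue by $O(N^{-3/2+\varepsilon})$ (eigenvector delocalization), any $Q_\alpha$ whose gap is not already small is constant in the swap variable, so $\|\partial^i(\sum_\alpha Q_{\alpha,\ell+1})\|_\infty\le |\sum_\alpha Q_{\alpha,\ell}|\cdot\max|\partial^{c}Q_\alpha|$. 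This recursively converts one power of $N$ (from summing over $\alpha$) into a moment of the counting variable, yielding the induction $(P_\ell)$ with an $N$-independent number of steps and a controlled chain-rule cost. That mechanism, together with the shrinking windows $Q_{\alpha,\ell}$ needed to absorb the $O(\eta_0)$ losses at each comparison step and the fixed-$r$ GUE estimate (Lemma~\ref{GUE-count-r-lma}), is precisely what your proposal is missing.
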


\begin{proof}
Recall the notation for $\h_d, \h_0, \D$, and $\b_j$ in the previous section. For $\ga N \leq \a \leq (1-\ga)N$, we can repeatedly apply (\ref{smoothing-est}) to bound
		\begin{align}
				&\1_{\la_{\a+1} - \la_\a \leq N^{- \frac{4}{3} + \d} } \notag\\
				&\leq \sum_{|k| \leq (N \D)^{-1}} \1_{\la_\a \in [\b_k, \b_{k+1})} \1_{\la_{\a+1} \in [\b_k, \b_{k+1} + N^{- \frac{4}{3}+\d} )} \notag\\
				&\leq \sum_{|k| \leq (N\D)^{-1}} \1_{\la_\a \in [\b_k, \b_{k+1})} \l[ q_{\a+1}(\Tr f_{\b_{k+1} + N^{ - \frac{4}{3} + \d}  + \h_0}) - q_{\a+1}(\Tr f_{\b_k - \h_0})  \r] \notag\\
				&\leq \sum_{|k| \leq (N\D)^{-1}} \l( q_\a(\Tr f_{\b_{k+1} + \h_0}) - q_\a(\Tr f_{\b_k - \h_0}) \r) \l[ q_{\a+1}(\Tr f_{\b_{k+1} + N^{ - \frac{4}{3} + \d}  + \h_0}) - q_{\a+1}(\Tr f_{\b_k - \h_0})  \r]\label{ineq1}.
		\end{align}
		Henceforth we use the shorthand $\bd q_\a(\b, \b') := q_\a(\Tr f_{\b'}) - q_\a(\Tr f_{\b})$, and define 
		\begin{equation}\label{eqn:Qalpha1}
				Q_{\alpha} := \sum_{|k| \leq (N\D)^{-1}} \bd q_\a\l(\b_k - \h_0, \b_{k+1} + \h_0\r) \bd q_{\a+1}\l(\b_k - \h_0, \b_{k+1} + N^{- \frac{4}{3} + \d}  + \h_0\r),
		\end{equation}
		Let $r \geq 2$ be even.  We apply Markov's inequality to the quantity in (\ref{extreme-gap-stability}) to get
		\begin{multline}
				\P^{\bm{v}}\l(\# \l\{ \ga N \leq \a \leq (1 - \ga) N : \la_{\a+1} - \la_\a \leq N^{- \frac{4}{3} + \d}  \r\} > N^{\kappa} \r) \\
				\leq N^{-r\kappa}\E^{\bm{v}} \l[ \l(\sum_{\ga N \leq \a \leq (1 - \ga)N} \1_{\la_{\a+1} - \la_\a \leq N^{- \frac{4}{3} + \d} }  \r)^r \r]  
				= N^{-r\kappa} \E^{\bm{v}} \l[  \l(\sum_\a Q_{\a}  \r)^r \r]. \label{egs-1}
		\end{multline}
		We now extend the definition (\ref{eqn:Qalpha1}) to 
				\begin{equation}\label{eqn:Qalpha1-ext}
				Q_{\alpha,\ell} := \sum_{|k| \leq (N\D)^{-1}} \bd q_\a\l(\b_k - \h_0, \b_{k+1} + \h_0\r) \bd q_{\a+1}\l(\b_k - \h_0, \b_{k+1} + N^{- \frac{4}{3} + \d}  + (4(r-\ell)+1)\h_0\r),
		\end{equation}
so that $Q_\alpha=Q_{\alpha,r}$.
		
		We will now prove by induction that there exists $p\geq 1$ such that  the following property $(P_\ell)$ holds for  all $0\leq \ell\leq r$: For any $c>0$ there exists $C=C(\ell,r)$ such that 
		\[\E^{\bm{v}}\Big[\big(\sum_\a Q_{\a,\ell} \big)^r\Big]\leq \varphi^C N^{r-\ell+c}.\]
		To prove ($P_0$), note that from (\ref{smoothing-est}) we can  bound $Q_\a$ with
		\begin{align*}
				&\sum_{|k| \leq (N\D)^{-1}} \l( q_\a(\Tr f_{\b_k - \h_0}) - q_\a(\Tr f_{\b_{k+1} + \h_0}) \r)\l[ q_{\a+1}(\Tr f_{\b_k - \h_0}) - q_{\a+1}(\Tr f_{\b_{k+1} + N^{ - \frac{4}{3} + \d}  + (4r+1)\h_0}) \r] \\
				&\leq \sum_{|k| \leq (N\D)^{-1}}   \1_{\la_\a \in [\b_k - 2\h_0, \b_{k+1} + 2\h_0)} \1_{\la_{\a+1} \in [\b_k - 2\h_0, \b_{k+1} + N^{- \frac{4}{3} + \d}  + (4r+2)\h_0)} \\
				&\leq 2 \1_{\la_{\a + 1} - \la_\a \leq N^{- \frac{4}{3} + \d}+\Delta  + (4r+4)\h_0},
		\end{align*}
		so $\sum Q_\alpha\leq 2N$ and $(P_0)$ follows. 
		
		We now assume ($P_\ell$) and 
change $ \E^{\bm{v}}$ with  $\E^{\bm{w}}$ by following the reasoning 
		 after (\ref{eqn:swapp}). 
		 The difference is that we now need to bound $\E[\|F^{(p+1)}\|_\infty]$ in (\ref{eqn:F}) for the new observable
		 $F=(G_{\ell+1})^r$ where 
		 $G_{\ell}=\sum_\a Q_{\a,\ell}.
		 $
We have
\[
|F^{(p+1)}|\leq C(r,p)\sum_{m=1}^{p+1}|G_{\ell+1}|^{r-m}\sum_{\sum \beta_i=m,\sum i\beta_i=p+1}\prod_i|\partial^iG_{\ell+1}|^{\beta_i}.
\]
Note the following elementary fact:  If $Q_{\alpha,\ell}\neq 1$ then $Q_{\alpha,\ell+1}$ is constant equal to $0$ on $|x|<N^{-\frac{1}{2}+\varepsilon}$; this is due to ${\rm d} \lambda_k=\langle u_k,{\rm d}H u_k\rangle$ so by eigenvector delocalization we have $|\lambda_k^{x}-\lambda_k^{0}|\leq N^{-\frac{3}{2}+\varepsilon}$, implying that $Q_\alpha=0$ constantly in $x$.
This implies that 
\begin{equation}\label{eqn:Stab2}
		\|\partial^iG_{\ell+1}\|_{\infty}\leq  |G_\ell|\cdot\max_{\sum c_\alpha=i}|\partial^{c_\alpha} Q_\alpha|.
\end{equation}
By the same proof as for (\ref{eqn:Stab}), we obtain 
		we have
			\begin{equation}\label{eqn:Stab2-b}
		\|\partial^iG_{\ell+1}\|_{\infty}\leq  \frac{\varphi^C}{(N\eta_d)^{i}}\cdot |G_\ell|.
		\end{equation}
		with overwhelming probability, so that
		\[
		\E[\|F^{(p+1)}\|_\infty]\leq C(r,p)\varphi^C\frac{1}{(N\eta_d)^{p+1}}\E[|G_\ell|^{r}].
		\]
		From the induction hypothesis ($P_\ell$) we have $\E[|G_\ell|^{r}]\leq\varphi^C N^{r-\ell+c}$ and we have obtained
	\[
		\E[\|F^{(p+1)}\|_\infty]\leq C(r,p)\varphi^C \frac{N^{r- \ell+c}}{(N\eta_d)^{p+1}},
		\]
		 so that
		\begin{equation}\label{egs-2}
			 \E^{\bm{v}} \l[  \l(\sum_\a Q_{\a,\ell+1}  \r)^r \r] = \E^{\bm{w}} \l[  \l(\sum_\a Q_{\a,\ell+1}  \r)^r \r] + {\rm O}\left( \varphi^C N^{2+r-\ell+c}\frac{N^{-(p+1)/2}}{(N\h_d)^{p+1}}\right).
		\end{equation}
		Then we can apply the relaxation estimate (\ref{e:gaps-relaxation}) so that for arbitrary $\iota > 0$, we have with overwhelming probability
		\begin{equation}
				\l|\l( \la_{\a + 1}^{\tb{w}} - \la_\a^{\tb{w}}\r) - \l( \la_{\a + 1}^{GUE} - \la_{\a}^{GUE} \r) \r| = O\l( \frac{N^{\iota}}{N^2}\r).
		\end{equation}
		Now, note that we can apply (\ref{smoothing-est}) again to bound $Q_{\a,\ell+1}$:
		\begin{align*}
				&\sum_{|k| \leq (N\D)^{-1}} \l( q_\a(\Tr f_{\b_k - \h_0}) - q_\a(\Tr f_{\b_{k+1} + \h_0}) \r)\l[ q_{\a+1}(\Tr f_{\b_k - \h_0}) - q_{\a+1}(\Tr f_{\b_{k+1} + N^{ - \frac{4}{3} + \d}  + (4(r-\ell-1)+1)\h_0 }) \r] \\
				&\leq \sum_{|k| \leq (N\D)^{-1}}   \1_{\la_\a \in [\b_k - 2\h_0, \b_{k+1} + 2\h_0)} \1_{\la_{\a+1} \in [\b_k - 2\h_0, \b_{k+1} + N^{- \frac{4}{3} + \d} + (4(r-\ell-1)+2)\h_0} \\
				&\leq 2 \1_{\la_{\a + 1} - \la_\a \leq \Delta+ N^{- \frac{4}{3} + \d}  +  (4(r-\ell-1)+4)\h_0}\\
				&\leq 2 \1_{\la_{\a + 1}^{\t{GUE}} - \la_{\a}^{\t{GUE}} \leq 2 N^{- \frac{4}{3} + \d} }.
		\end{align*}
		where we used that $\frac{N^{\iota}}{N^2}$, $\h_0$, and  $\D$ living on smaller scales than $N^{-4/3 + \d}$. In sum, we have proved
		\[
		\E^{\bm{v}}\Big[\big(\sum_\a Q_{\a,\ell+1} \big)^r\Big]
		\leq    \E^{\t{GUE}} \l[  \l( \sum_{\ga N \leq \a \leq (1-\ga)N} \1_{\la_{\a + 1} - \la_\a \leq 7 N^{- \frac{4}{3} + \d}} \r)^r \r]+ {\rm O}\left( \varphi^C N^{2+r-\ell+c}\frac{N^{-(p+1)/2}}{(N\h_d)^{p+1}}\right).
\]
		By the following Lemma~\ref{GUE-count-r-lma}, we can pick $\d$ sufficiently small, independent of $N$, so that the above expectation is ${\rm O}(N^c)$ for arbitrarily small $c$.  Moreover, for $p=\lfloor100/\varepsilon\rfloor$ (remember $\varepsilon$ from $\eta_d=N^{-\frac{3}{2}+\varepsilon}$) the above error term is clearly 
		 ${\rm O}\left( \varphi^C N^{r-(\ell+1)+c}\right)$, concluding the proof of $(P_{\ell+1})$.
		 
		The result of the lemma now follows easily from (\ref{egs-1}) and  the bound given by the property ($P_r$).
\end{proof}

\begin{lemma}\label{GUE-count-r-lma}
		Let $\varepsilon > 0$ be given and $r > 0$ be an arbitrary integer, independent of $N$.  There exists a $\d > 0$ sufficiently small and $N_0$ such that for $N > N_0$ we have 
		\begin{equation}\label{GUE-count-r}
			 \E^{{\rm GUE}} \l[  \l( \sum_{\ga N \leq \a \leq (1-\ga)N} \1_{\la_{\a + 1} - \la_\a \leq N^{- \frac{4}{3} + \d}} \r)^r \r] < N^{\varepsilon}.
		\end{equation}
\end{lemma}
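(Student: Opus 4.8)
The plan is to reduce the $r$-th moment of the counting function to a sum over $r$-tuples of a product of indicators, each of which forces a small gap at a fixed bulk index, and then to estimate each such product by the corresponding $r$-point correlation function of the GUE. First I would expand
\[
\E^{\rm GUE}\Big[\Big(\sum_{\ga N\le\a\le(1-\ga)N}\1_{\la_{\a+1}-\la_\a\le N^{-\frac43+\d}}\Big)^r\Big]
=\sum_{\a_1,\dots,\a_r}\E^{\rm GUE}\Big[\prod_{m=1}^r\1_{\la_{\a_m+1}-\la_{\a_m}\le N^{-\frac43+\d}}\Big].
\]
Splitting the index set according to which of the $\a_m$ coincide, it suffices to bound, for each fixed $1\le s\le r$ and each choice of $s$ distinct bulk indices $\b_1<\dots<\b_s$ (with multiplicities summing to $r$, contributing only a bounded combinatorial factor), the probability $\P^{\rm GUE}(\bigcap_{i=1}^s\{\la_{\b_i+1}-\la_{\b_i}\le N^{-\frac43+\d}\})$; the number of ways to choose the distinct indices is at most $N^s$, so I need this probability to be $\le N^{-s+\varepsilon/2}$, say.

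The key input is the bound on GUE correlation functions: from \cite{Meh}*{Chapter 7}, the $k$-point correlation function satisfies $\rho_k^N(x_1,\dots,x_k)\lesssim_k N^k$ uniformly for $x_1,\dots,x_k$ in the bulk. For a single small gap at index $\b_i$, rigidity localizes $\la_{\b_i}$ in a window $I_{\b_i}$ of size $N^{-1}\vphi^C$ around $\ga_{\b_i}$ up to an event of probability $N^{-D}$, and then the event $\{\la_{\b_i+1}-\la_{\b_i}\le N^{-\frac43+\d}\}$ together with $\la_{\b_i}\in I_{\b_i}$ forces both $\la_{\b_i}$ and $\la_{\b_i+1}$ to lie in a common interval $\tilde I_{\b_i}$ of length $O(N^{-1}\vphi^C)$; integrating the $2$-point function over $\tilde I_{\b_i}$ gives $\P(\la_{\b_i+1}-\la_{\b_i}\le N^{-\frac43+\d},\ \la_{\b_i}\in I_{\b_i})\lesssim N^2(N^{-\frac43+\d})\cdot(N^{-1}\vphi^C)$, which is far less than $N^{-1}$ once $\d$ is small. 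For $s$ such events at well-separated indices, I would split into the case where all the intervals $\tilde I_{\b_i}$ are pairwise disjoint — then integrate the $2s$-point correlation function $\rho_{2s}^N\lesssim N^{2s}$ over the product $\prod_i(\tilde I_{\b_i}\times J_i)$, where $J_i$ is the interval of length $N^{-\frac43+\d}$ in which $\la_{\b_i+1}$ must lie, giving a bound $\lesssim N^{2s}\prod_i(N^{-1}\vphi^C)(N^{-\frac43+\d})=\vphi^{Cs}N^{(\frac23+\d)s}\cdot N^{-s}=\vphi^{Cs}N^{(-\frac13+\d)s}$ — which is $\le N^{-s+\varepsilon/2}$ for $\d<\frac13-\frac{\varepsilon}{2s}$ and $N$ large. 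The (vanishingly few) remaining cases where some windows overlap are absorbed into a case with fewer indices and a larger but still $N^{-\frac43+\d}$-scale gap, handled by the same bound with a smaller $s$.

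The main obstacle I expect is bookkeeping rather than anything deep: keeping track of the combinatorics of coincidences and near-coincidences of the $\a_m$ so that, after summing over all $O(N^r)$ tuples, the total is still $o(N^\varepsilon)$; this forces the choice $\d=\d(r,\varepsilon)$ with $\d\downarrow0$ as $r$ grows or $\varepsilon$ shrinks, exactly as in the statement. A secondary point to be careful about is that Mehta's correlation-function bounds are stated for fixed $k$, so one must check the implied constant $C_k$ only enters through $\vphi^{Cs}$ with $s\le r$ fixed and is therefore harmless; and that rigidity is applied only $O(r)$ times so the error $N^{-D}$ stays negligible. Choosing, e.g., $\d=\frac{\varepsilon}{4r}$ and $N_0$ large enough then yields \eqref{GUE-count-r}. \QED
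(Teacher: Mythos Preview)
Your approach has a genuine gap: the crude correlation bound $\rho_k^N\lesssim N^k$ from \cite{Meh} is too weak for \eqref{GUE-count-r}, and your arithmetic hides this. You claim that your probability bound $\vphi^{Cs}N^{(-\frac13+\d)s}$ is $\le N^{-s+\varepsilon/2}$ for $\d<\frac13-\frac{\varepsilon}{2s}$, but this would require $(-\tfrac13+\d)s\le -s+\tfrac{\varepsilon}{2}$, i.e.\ $(\tfrac23+\d)s\le\tfrac{\varepsilon}{2}$, which fails for every $s\ge1$. Indeed, after summing over the $O(N^s)$ choices of distinct indices your bound becomes $\vphi^{Cs}N^{(\frac23+\d)s}$; already at $s=1$ this yields a first-moment bound of order $N^{2/3+\d}$, not $N^\varepsilon$. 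The estimate $\rho_k\lesssim N^k$ only resolves the spectrum on scale $N^{-1}$ and carries no information about the scale $N^{-4/3}$.

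What is missing is the \emph{level repulsion} of the GUE: one needs the refined two-point estimate $\rho_2^{{\rm GUE}(N)}(x,x+u)=c\,N^4(4-x^2)^2 u^2+O(N)$ for $|u|\le cN^{-4/3+\d}$ (Lemma~\ref{GUE-small-gap-delta}), encoding the quadratic vanishing of the sine-kernel determinant near the diagonal. This turns $\int\1_{|x-y|\le N^{-4/3+\d}}\,d\rho_2$ from $O(N^{2/3+\d})$ into $O(N^{3\d})$, which is exactly what makes the first moment small. For the higher moments the paper pushes this further via the determinantal structure: after subtracting the first column of $\det[K(x_i,x_j)]$ from the others, each remaining column is $O(N^2\cdot N^{-4/3+\d})$ rather than $O(N)$, and Hadamard's inequality together with a partition combinatorics then controls the cross terms by $C(r)N^{3r\d}$. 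Without some repulsion input of this kind your argument cannot close; the rigidity localization recovers only an extra factor $N^{-1}$ per gap, short of the $N^{-4/3}$ you need.
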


\begin{proof}
		We proceed as follows., after expansion of the product. Via the determinantal representation of the spectrum of GUE and Hadamard's inequality, we can essentially treat products of indicators with no shared indices as independent. So the main difficulty will be to show that the total contribution of those terms with overlapping indices is asymptotically small. 

		First, from Lemma~\ref{GUE-small-gap-delta} we have 
\[
				\int \int \1_{|x_\a - x_\b| \leq N^{- \frac{4}{3} + \d}} d\rho_2^{\t{GUE}(N)}(x_\a, x_\b)\asymp \int_{0 \leq u \leq  N^{- \frac{4}{3} + \d}} \l[ N^{4} u^2 + O(N) \r] du \asymp N^{3\d} + N^{- \frac{1}{3} + \d}.
\]
		For the terms with shared indices, we can afford a cruder estimate, starting with
		\begin{multline}
				\int \prod_{i = 0}^{k-1} \1_{|x_i - x_{i+1}| \leq N^{- \frac{4}{3} + \d}} d\rho_{k+1}^{\t{GUE}(N)}(x_0, x_1, \ldots, x_k)  \\
				\leq \int_{-2 + \varepsilon_0}^{2 - \varepsilon_0} dx_0 \int \prod_{i=1}^k \1_{|x_0 - x_i| \leq k N^{- \frac{4}{3} + \d}} d\rho_{k+1}^{\t{GUE}(N)}(x_0, x_1, \ldots, x_k). \label{shared-integral}
		\end{multline}
		Let $K(x, y)$ be the determinantal kernel for the GUE point process. Then, due to the multilinearity of the determinant, we can write
		\begin{multline*}
				\rho_{k+1}^{\t{GUE}(N)}(x_0, x_1, \ldots, x_k) = \det_{(k+1)\x(k+1)} [K(x_i, x_j)]_{1 \leq i, j \leq k}\\
				= \begin{vmatrix}
						K(x_0, x_0) & K(x_0, x_1) - K(x_0, x_0) & \cdots & K(x_0, x_k) - K(x_0, x_0) \\
						K(x_1, x_0) & K(x_1, x_1) - K(x_1, x_0) &  \cdots & K(x_1, x_k) - K(x_1, x_0) \\
						\vdots & \vdots & \ddots & \vdots\\
						K(x_k, x_0) & K(x_k, x_1) - K(x_k, x_0) &  \cdots & K(x_k, x_k) - K(x_k, x_0)
				\end{vmatrix}.
		\end{multline*}
		From \cite{BABo13}*{Lemma 2.8} we know that the first column is $O(N)$ and the remaining $k$ columns are of order $O(N^2 \* k N^{- 4/3 + \d})$. Moreover, as the integration domain is $O( (k N^{-4/3 + \d})^k)$, so that (\ref{shared-integral}) is of order $O(k^{2k} N^{k(-2/3 + 2\d) + 1})$.

		Now, we handle the combinatorics of the product expansion in (\ref{GUE-count-r}) in the following way. Because each indicator is idempotent, we will split into cases the number $m$ of indicators represented in a term of the expansion. There are $\binom{r}{m}$ ways to choose the locations of the represented indicators, and at most $m^{r - m}$ ways to select which indicators the remaining $r-m$ are to agree with. Now, among the terms of $m$ different indicators, there are $\cal P(m)$, the number of partitions of $m$, different types of terms to consider. In particular, let $(p_1, p_2, \ldots, p_\ell)$ be a partition of $m$. Then, collectively, the terms of this structure are bounded via Hadamard's inequality and (\ref{shared-integral}) by (below we relabel the points $x_i$ as $x_{j,i}$ to emphasize the combinatorial structure)
		\begin{multline*}
				\int \prod_{j=1}^\ell \prod_{i=1}^{p_j} \1_{|x_{j,i} - x_{j,i+1}| \leq N^{- \frac{4}{3} + \d}} d\rho^{\t{GUE}(N)}_{m + \ell}(\{x_{j,i}\}) 
				\leq \prod_{j=1}^\ell \l[ \int \prod_{i=1}^{p_j}\1_{|x_{j,i} - x_{j,i+1}| \leq N^{- \frac{4}{3} + \d}} d\rho_{p_j + 1}^{\t{GUE}(N)}(\{x_{j,i}\}_{i=1}^{p_j + 1} )\r] \\
				\leq \prod_{j=1}^\ell \l[ O \l( p_j^{2p_j} N^{p_j (-2/3 + 2\d) + 1}  \r)\1_{p_j \geq 2} + O \l(  N^{3 \d} ) \r) \1_{p_j = 1}\r] 
				\leq O \l( m^{2m} N^{m (-2/3 + 2\d) + m/2} + N^{m 3 \d} \r). \label{extend-loc}
		\end{multline*}
		Altogether we  bound the left hand side of (\ref{GUE-count-r}) by 
		\begin{multline*}
				 \E^{\t{GUE}} \l[  \l( \sum_{\ga N \leq \a \leq (1-\ga)N} \1_{\la_{\a + 1} - \la_\a \leq N^{- \frac{4}{3} + \d}} \r)^r \r] \\
				\leq \sum_{m=1}^{r} \binom{r}{m} m^{r-m} \cal P(m) O \l( m^{2m} N^{m (-2/3 + 2\d) + m/2} + N^{3m \d} \r) 
				\leq C  \* N^{3r \d},
		\end{multline*}
		where $C = C(r)$ is a constant depending only on $r$, which is independent of $N$. Therefore, picking $\d < \varepsilon/(3r)$ yields (\ref{GUE-count-r}).
\end{proof}

In the same way as Theorem~\ref{t:gap-rate} was proven from Theorem~\ref{4-moment-thmbis}, Theorem~\ref{t:min-gap-rate} follows immediately from the following Theorem~\ref{4-moment-thmbis-min}.

\begin{theorem}\label{4-moment-thmbis-min}
		There exists $\varepsilon_0>0$ such that for any $0<\varepsilon<\varepsilon_0$ and $0 < \ga < 1/2$ the following holds. There exists a $p = p(\varepsilon)> 0$ and $N_0 = N_0(D, \varepsilon) > 0$ both independent of $N$, such that if 
 the $\bf{v}$ and $\bf{w}$ matrix entries have matching moments up to the $p$-th order, and that $\bf{w}$ is a Gaussian-divisible distribution as in (\ref{GDE-form}) with $t\asymp 1$,  then for all $N \geq N_0$  and $0 < u < N^\varepsilon$
		\begin{equation}\label{extreme-gap-mm}
				\l[ \P^{\bm{v}} - \P^{\bm{w}} \r] \l(  N^{4/3}\min_{\ga N \leq \a \leq (1 - \ga) N} \l( \la_{\a+1} - \la_\a \r) \geq u \r) =O (N^{-1/2 + \varepsilon}).
		\end{equation}
		Moreover, the same rate of convergence holds if we replace $\bf{w}$ with the GUE.
\end{theorem}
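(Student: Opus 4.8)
The plan is to follow the proof of Theorem~\ref{4-moment-thmbis}, with the single gap replaced by the minimum over the bulk, using Lemma~\ref{eqn:notmany} and the stability estimates inside its proof to control the resulting more complicated observable. Write $s=uN^{-4/3}$ and $E_u=\{N^{4/3}\min_{\a}(\la_{\a+1}-\la_\a)\geq u\}$, so that $E_u^c$ is the event that some bulk gap is $<s$; recall also the scales $\h_d=N^{-3/2+\varepsilon}<\h_0<\D$ and the windows $\b^{\a}_j=\ga_\a+j\D$ of the previous section, with $\D$ taken as small as the constraint $\D>\h_0>\h_d\geq N^{-3/2+\varepsilon}$ allows. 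As in the opening lines of the proof of Lemma~\ref{eqn:notmany} one first sandwiches $\1_{E_u^c}$, via (\ref{smoothing-est}) and rigidity, between $\1_{\mathcal{Q}^-\geq1}$ and $\1_{\mathcal{Q}^+\geq1}$, where $\mathcal{Q}^{\pm}=\sum_{\a}Q^{\pm}_\a$ and $Q^{\pm}_\a$ is of the form (\ref{eqn:Qalpha1}) with the threshold $s$ replaced by $s\mp C(\D+\h_0)$; one then smooths the outer indicator, fixing a smooth monotone $\tilde q$ with $\tilde q\equiv0$ on $(-\infty,0]$ and $\tilde q\equiv1$ on $[1,\infty)$ and setting $F^{\pm}:=\tilde q(\mathcal{Q}^{\pm})$. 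Since each $Q^{\pm}_\a$ is already a smooth function of finitely many $\Tr f_E(H)$, each $F^{\pm}$ is a bounded analytic functional of the resolvent, and $\E^{\bm v}[F^-]-O(N^{-D})\leq\P^{\bm v}(E_u^c)\leq\E^{\bm v}[F^+]+O(N^{-D})$, with the same for $\bm w$ and for GUE.

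Next I would transfer $\E^{\bm v}[F^{\pm}]$ to $\E^{\bm w}[F^{\pm}]$ by the Lindeberg telescopic swap of Theorem~\ref{4-moment-thmbis}: along the path $H_0=H^{\bm v},\dots,H_{\ga(N)}=H^{\bm w}$ one expands to order $p$ as in (\ref{eqn:F}), so that the matching first $p$ moments cancel and each swap contributes $O_p\big(N^{-(p+1)/2}\E[\|(F^{\pm})^{(p+1)}\|_\infty]\big)$. This is where Lemma~\ref{eqn:notmany} enters: as in (\ref{eqn:Stab2}), a single entry swap is a perturbation of operator norm $\leq N^{-1/2+\kappa}$, hence moves every eigenvalue by $\leq N^{-3/2+\kappa}\ll s$, so the set of small gaps — and with it the non-constant summands of $\mathcal{Q}^{\pm}$ — is unchanged along the swap; combining the chain rule for $\tilde q$ with (\ref{eqn:derBound}) and (\ref{eqn:est}) then gives $\|(F^{\pm})^{(p+1)}\|_\infty\leq\varphi^C(N\h_d)^{-(p+1)}(\mathcal{Q}^{\pm})^{p+1}$ with overwhelming probability, while $\mathcal{Q}^{\pm}\leq N^{\kappa}$ with overwhelming probability by Lemma~\ref{eqn:notmany}. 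Summing the $\ga(N)\asymp N^2$ swaps bounds $|\E^{\bm v}[F^{\pm}]-\E^{\bm w}[F^{\pm}]|$ by $\varphi^C N^{2}N^{\kappa(p+1)}(N^{1/2}N\h_d)^{-(p+1)}=\varphi^C N^{2}N^{(\kappa-\varepsilon)(p+1)}$, which is $o(N^{-1/2+\varepsilon})$ once $\kappa<\varepsilon$ and $p=p(\varepsilon)$ is large (e.g.\ $\kappa=\varepsilon/100$, $p=\lfloor100/\varepsilon\rfloor$).

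On the Gaussian-divisible side I would run the coupled dynamics (\ref{e:DBM}): for $t\asymp1$, (\ref{e:gaps-relaxation}) gives $|\la^{\bm w}_\a-\mu_\a|\leq N^{-2+\iota}$ simultaneously in $\a$ with overwhelming probability, $\pmb\mu$ denoting GUE. Since $N^{-2+\iota}\ll\D,\h_0\ll s$, replacing $\la^{\bm w}$ by $\mu$ and undoing the $\b^\a_j$-discretization perturbs the threshold only by $C(\D+\h_0+N^{-2+\iota})$; the associated error, after expanding the products and applying Hadamard's inequality, is estimated by the GUE two-point level-repulsion bound of Lemma~\ref{GUE-small-gap-delta} and by Lemma~\ref{GUE-count-r-lma} (which also handles the combinatorics of overlapping indices, exactly as in the proof of Lemma~\ref{eqn:notmany}), and $\E^{{\rm GUE}}[F^{\pm}]$ is then pinned to $\P^{{\rm GUE}}(N^{4/3}\min_\a(\mu_{\a+1}-\mu_\a)\geq u)$ using the uniform boundedness of the density of $N^{4/3}\min_\a(\mu_{\a+1}-\mu_\a)$ for GUE from \cite{BABo13}*{Corollary 1.5}. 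Collecting the three contributions and optimizing $\D\approx\h_0\approx\h_d\approx N^{-3/2+\varepsilon}$ as in the deduction of Theorem~\ref{t:gap-rate} yields (\ref{extreme-gap-mm}); running the same argument with $\bm w$ itself replaced by GUE throughout gives the final assertion.

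I expect the main obstacle to be the stability estimate of the second step: unlike the single gap, $F^{\pm}$ is a cutoff of a \emph{sum of $\asymp N$ products} of trace observables, so one must show that its high $x$-derivatives do not see this factor of $N$ — which is precisely the content of (\ref{eqn:Stab2}) (a single entry swap barely moves eigenvalues, hence fixes which summands are active) together with the a priori count $\mathcal{Q}^{\pm}\lesssim N^{\kappa}$ of Lemma~\ref{eqn:notmany}. Beyond that, keeping \emph{every} error below $N^{-1/2+\varepsilon}$ — rather than merely $N^{o(1)}$, which is all Lemma~\ref{eqn:notmany} requires — is the more delicate part of the bookkeeping, and is where the GUE level-repulsion inputs and the choice of the intermediate scales must be used carefully.
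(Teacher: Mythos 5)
Your proof takes essentially the same route as the paper: smooth the min-gap indicator via $\Tr f_E(H)$-based observables $Q_\alpha$, run the Lindeberg swap with $p$-moment matching, obtain the key $(p+1)$-derivative stability bound from Lemma~\ref{eqn:notmany} together with the observation that $\partial_x^\ell Q_\alpha\equiv 0$ wherever the gap at $\alpha$ exceeds $N^{-4/3+\delta}$, then transfer to GUE via the relaxation estimate and GUE level repulsion. Two cosmetic notes: the paper works directly with the product $F=\prod_\alpha(1-Q_\alpha)$ rather than a smooth cutoff $\tilde q$ of the sum $\sum_\alpha Q_\alpha^\pm$ (both work, but the product avoids the extra sandwich bookkeeping), and the $N^{-3/2+\kappa}$ eigenvalue-shift bound under an entry swap does not follow merely from the operator-norm bound $N^{-1/2+\kappa}$ (Weyl would only give $N^{-1/2+\kappa}$) but requires eigenvector delocalization via $\partial_x\lambda_k=\langle u_k, D u_k\rangle$, which is indeed how the estimate behind (\ref{eqn:Stab2}) that you cite is established.
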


\begin{proof}
Similarly to (\ref{eqn:Qalpha1-ext}) we denote $\bd q_\a(\b, \b') := q_\a(\Tr f_{\b'}) - q_\a(\Tr f_{\b})$, and define 
		\begin{equation}\label{eqn:Qalpha2}
				Q_\a := \sum_{|k| \leq (N\D)^{-1}} \bd q_\a\l(\b_k - \h_0, \b_{k+1} + \h_0\r) \bd q_{\a+1}\l(\b_k - \h_0, \b_{k+1} + N^{- \frac{4}{3}}u  + \h_0\r). 
		\end{equation}
Then
\[
\1_{N^{4/3}\min_{\ga N \leq \a \leq (1 - \ga) N} \l( \la_{\a+1} - \la_\a \r) \geq u}=\prod_{\alpha}\1_{ ( \la_{\a+1} - \la_\a ) \geq N^{-4/3}u}\geq \prod_{\alpha}(1-Q_\alpha)=:F
\]
and we now want to bound $\|F^{(p+1)}\|_\infty$ to estimate the error between the ensembles $\pmb{v}$ and $\pmb{w}$ by Lindeberg exchange, as in (\ref{eqn:swapp}).  Note that if $\lambda^{\pmb{v}}_{\alpha+1}-\lambda^{\pmb{v}}_\alpha>N^{-\frac{4}{3}+\delta}$  then with overwhelming probability we have 
$\|\partial^\ell Q_\alpha\|_\infty=0$ for all $1\leq \ell\leq p$ (this is due to ${\rm d} \lambda_k=\langle u_k,{\rm d}H u_k\rangle$ so by eigenvector delocalization we have $\|\lambda_k^{x}-\lambda_k^{0}\|_\infty\leq N^{-\frac{3}{2}+3\varepsilon}$, implying that $Q_\alpha=0$ constantly in $x$).

Together with Lemma \ref{eqn:notmany} this implies that for any fixed $\kappa>0$, with overwhelming probability we have
\[
\|F^{(p+1)}\|_\infty \leq (N^{\kappa})^{p+1}
\max_{\sum c_\alpha=p+1}\prod_\alpha|\partial^{c_\alpha} Q_\alpha|.
\]
By the same proof as for (\ref{eqn:Stab}), we obtain 
			\begin{equation}\label{eqn:Stab3}
\|F^{(p+1)}\|_\infty \leq (N^{\kappa})^{p+1}\frac{\varphi^C}{(N\eta_d)^{p+1}},
		\end{equation}
		and the moment matching yields, for arbitrary small $c>0$,
		\[
		\P^{\bm{v}}  \l(  N^{4/3}\min_{\ga N \leq \a \leq (1 - \ga) N} \l( \la_{\a+1} - \la_\a \r) \geq u \r)\geq \E^{\bm{w}}[F]+{\rm O}\left(\frac{N^{2+c}}{(N\eta_d)^{p+1}}\right) 
		\]
Using $Q_\alpha\leq \1_{\lambda_{\alpha+1}-\lambda_{\alpha}\leq N^{-4/3}u+10\eta_0}$ we obtain
\begin{multline*}
		\P^{\bm{v}}  \l(  N^{4/3}\min_{\ga N \leq \a \leq (1 - \ga) N} \l( \la_{\a+1} - \la_\a \r) \geq u \r) \\
		\geq
				\P^{\bm{w}}  \l(\min_{\ga N \leq \a \leq (1 - \ga) N} \l( \la_{\a+1} - \la_\a \r) \geq N^{-\frac{4}{3}}u+ 10\eta_0\r)
		+{\rm O}\left(\frac{N^{2+c}}{(N\eta_d)^{p+1}}\right) 
\end{multline*}
		With 
		(\ref{e:gaps-relaxation}) this gives
		\begin{multline*}
		\P^{\bm{v}}  \l(  N^{4/3}\min_{\ga N \leq \a \leq (1 - \ga) N} \l( \la_{\a+1} - \la_\a \r) \geq u \r) \\
		\geq
				\P^{{\rm GUE}}  \l(\min_{\ga N \leq \a \leq (1 - \ga) N} \l( \la_{\a+1} - \la_\a \r) \geq N^{-\frac{4}{3}}u+ 10\eta_0+\frac{N^c}{N^2}\r)
		+{\rm O}\left(\frac{N^{2+c}}{(N\eta_d)^{p+1}}\right).
		\end{multline*}
		One then concludes easily from the facts that (1) $\eta_0$ and $N^{c-2}$ live on a scale smaller than $N^{-4/3}u$,  (2) for GUE $N^{4/3}\min_{\ga N \leq \a \leq (1 - \ga) N} \l( \la_{\a+1} - \la_\a \r)$ converges in distribution to a random variable with density,  (3) the reverse inequality also holds with a similar reasoning.
\end{proof}

\begin{remark}
		The proof of Theorem~\ref{4-moment-thmbis-min} may be carried out for the real case, to obtain (\ref{HN-GOE-min}). To do this, one would need to extend Lemma~\ref{GUE-count-r-lma} for the GOE,  with technical input being correlation inequalities of type $\rho_k^{\t{GOE}(N)}(x_1, \ldots, x_{2k})\lesssim\rho_2^{\t{GOE}(N)}(x_1,x_2)\dots\rho_2^{\t{GOE}(N)}(x_{2k-1},x_{2k})$,  based on the Pfaffian representation of the correlation functions.  

	The estimate (\ref{HN-GOE-min}) corresponds to scales slightly above the expected scale $N^{-3/2}$ of the smallest gap.  This is because our moment matching estimates of type  (\ref{eqn:MomentMatch}) is only strong at a scale $\h = N^{-3/2 + \d}$, hence the additional factor of $N^\d$ in (\ref{HN-GOE-min}). 
\end{remark}

\appendix
\section{Appendix}

The following two classical results from \cite{ErdYauYin2012} are stated for ease of reference.
\begin{theorem}[Strong local semicircle law]\label{SLSL}
		Let $H = H_N$ be a real or complex generalized Wigner ensemble satisfying the exponential decay condition (\ref{a:exp-decay}). Let $|E| \leq 10$ and $0 < \h \leq 10$. Then for all $D > 0$ there exists an $N_0 > 0$ such that for all $N \geq N_0$ we have
		\begin{equation}
				\P\l( \max_{i,j} |G_{ij}(z) - \d_{ij} m_{sc}(z)| > \vphi^C \l( \sqrt{ \frac{\Im m_{sc}(z)}{N\h} } + \frac{1}{N\h} \r) \r) \leq N^{-D}.
		\end{equation}
\end{theorem}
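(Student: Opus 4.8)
The statement is quoted from \cite{ErdYauYin2012}, so in the paper it suffices to cite that reference; here I outline the structure of the argument for completeness. The plan is the self-consistent equation approach of Erd\H{o}s--Schlein--Yau. Write $G = G(z) = (H-z)^{-1}$ with $z = E + \mathrm{i}\eta$, set $m(z) = \frac1N\sum_i G_{ii}(z)$, and let $\Lambda = \max_{ij}|G_{ij} - \delta_{ij} m_{sc}(z)|$ be the quantity to be controlled, with target bound $\Psi := \sqrt{\Im m_{sc}(z)/(N\eta)} + 1/(N\eta)$. By the Schur complement formula, for each $i$,
\[
G_{ii} = \frac{1}{H_{ii} - z - \sum_{k,l}^{(i)} H_{ik} G^{(i)}_{kl} H_{li}},
\]
where $G^{(i)}$ is the resolvent of the $(N-1)\times(N-1)$ minor with row and column $i$ removed. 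Conditionally on $G^{(i)}$, the quadratic form in the denominator concentrates around $\sum_k \sigma_{ik}^2 G^{(i)}_{kk}$: a large-deviation bound for quadratic forms, whose hypotheses are supplied by the subexponential moment assumption (\ref{a:exp-decay}), gives fluctuations of size $\varphi^C\bigl(\frac1{N^2}\sum_{kl}|G^{(i)}_{kl}|^2\bigr)^{1/2} = \varphi^C\bigl(\Im m^{(i)}/(N\eta)\bigr)^{1/2}$ by the Ward identity $\sum_l |G^{(i)}_{kl}|^2 = \Im G^{(i)}_{kk}/\eta$. Combining this with the normalization $\sum_k \sigma_{ik}^2 = 1$ and the minor-expansion identity $G^{(i)}_{kk} = G_{kk} - G_{ki}G_{ik}/G_{ii}$ yields the approximate self-consistent equations $-1/G_{ii} = z + m + O(\varphi^C(\Psi + \Lambda))$ for each $i$, and hence, after averaging, $-1/m = z + m + O(\varphi^C(\Psi+\Lambda))$; a parallel Schur expansion handles the off-diagonal $G_{ij}$.

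Next I would use the quantitative stability of the limiting equation $-1/m_{sc} = z + m_{sc}$. Near the bulk the relevant map is invertible with a uniform modulus, so an approximate solution with error $\delta$ lies within $O(\delta/\sqrt{\kappa + \eta + \delta})$ of $m_{sc}$, where $\kappa = \mathrm{dist}(E, \{-2,2\})$; for $|E|$ near the edge the modulus degrades in the standard way, and for $|E| > 2$ everything is easier since $\Im m_{sc}$ is small. This converts a priori control of $\Lambda$ into the self-improving bound $\Lambda \le \varphi^C \Psi$ whenever, say, $\Lambda \le (N\eta)^{-1/4}$. A continuity (bootstrap) argument in $\eta$ then closes the estimate: at $\eta \sim 10$ one has $\Lambda \le \|G\| + |m_{sc}| \lesssim 1$ and the target bound holds trivially; decreasing $\eta$ along a fine grid, and using that $\Lambda$ is Lipschitz in $z$ with a polynomial constant, the bound cannot jump and so persists down to the smallest scale. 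A union bound over the grid together with a final Lipschitz step upgrades this to all $z$ simultaneously, with the claimed $N^{-D}$ probability.

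To reach the \emph{optimal} power in $\Psi$ rather than a square-root loss one needs the fluctuation averaging lemma: although each $G_{ii} - m_{sc}$ fluctuates on scale $\Psi$, the average $\frac1N\sum_i(G_{ii}-m_{sc})$ fluctuates only on scale $\varphi^C(\Psi^2 + (N\eta)^{-1})$, thanks to cancellation among the weakly dependent error terms $Z_i := \sum_{k,l}^{(i)} H_{ik}G^{(i)}_{kl}H_{li} - \sum_k \sigma_{ik}^2 G^{(i)}_{kk}$. This is proved by estimating $\E\bigl|\frac1N\sum_i Z_i\bigr|^{2q}$ for large $q$, repeatedly applying the minor expansion to decouple indices and carefully enumerating which index configurations survive the expectation. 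Feeding the improved control on $m$ back into the individual Schur expansions then yields the stated bound on $\max_{ij}|G_{ij}-\delta_{ij}m_{sc}|$. I expect this fluctuation-averaging step to be the main obstacle: its combinatorics are the technically heaviest part of the proof, and it is exactly where the hypotheses $\sigma_{ij}^2 \asymp N^{-1}$ and $\sum_i\sigma_{ij}^2 = 1$ enter in an essential way.
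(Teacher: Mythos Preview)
Your proposal is correct and matches the paper's treatment: the paper simply states Theorem~\ref{SLSL} in the appendix as a classical result from \cite{ErdYauYin2012} ``for ease of reference'' and gives no proof, so citing that reference is exactly what is done. Your outline of the self-consistent equation / bootstrap / fluctuation-averaging argument is an accurate sketch of the proof in the cited source and goes beyond what the paper itself provides.
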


		\begin{remark}
		The above local law also holds for any matrix of type $Q+x D$ with $|x|<N^{-1/2+\kappa}$ ($\kappa<1/2$) as defined in (\ref{qa}),   with the error term $\varphi^C$ with a different $C$.  Moreover the error bound is uniform in $z$:
				\begin{equation}\label{eqn:Extend}
				\P\l( \max_{i,j,|E| \leq 10,  0 < \h \leq 10, |x|\leq N^{-\frac{1}{2}+\kappa}} \left|\left(\frac{1}{Q-z+x D}\right)_{ij} - \d_{ij} m_{sc}(z)\right| > \varphi^C \l( \sqrt{ \frac{\Im m_{sc}(z)}{N\h} } + \frac{1}{N\h} \r) \r) \leq N^{-D}.
		\end{equation}
		Indeed e.g. in the bulk the above local law follows from a simple resolvent expansion when $\eta>N^{-1}$ as in  (\ref{eqn:expansion}) (this is where the hypothesis $\kappa<1/2$ is essential), to compare $Q+xD$ to $Q+\frac{v_{ab}}{\sqrt{N}}D$, for which the local law is known thanks to  
		Theorem \ref{SLSL}.  Then monotony allows to extend the result from  to any $\eta>0$ (see e.g. \cite{BK}*{Lemma 6.2}).
		  Uniformity in $z$ follows from a simple grid argument (see e.g. \cite{BK}*{Remark 2.7}).
		\end{remark}

\begin{theorem}[Rigidity of eigenvalues]\label{rigidity}
		Under the same assumptions as Theorem~\ref{SLSL}, we have
		\begin{equation}\label{eqn:rigidity}
				|\la_k - \ga_k| \leq \vphi^C N^{- \frac{2}{3}} (\hat k)^{ - \frac{1}{3}} \t{ for all $1 \leq k \leq N$} 
		\end{equation}
		with overwhelming probability for all sufficiently large $N$.
\end{theorem}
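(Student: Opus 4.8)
The statement is the classical eigenvalue rigidity estimate (quoted here from \cite{ErdYauYin2012}), and the plan is to deduce it from the strong local semicircle law, Theorem~\ref{SLSL}, in three standard steps: pass to an averaged law for the Stieltjes transform, convert that into a sharp bound on the empirical counting function, and invert the semicircle counting function. First I would average the entrywise bound of Theorem~\ref{SLSL} over the diagonal: writing $m_N(z)=N^{-1}\Tr(H-z)^{-1}=N^{-1}\sum_i G_{ii}(z)$, the triangle inequality gives, with overwhelming probability and uniformly in $z=E+{\rm i}\eta$ with $|E|\le 10$, $0<\eta\le 10$ (using the uniformity in $z$ recorded after Theorem~\ref{SLSL}),
\[
|m_N(z)-m_{sc}(z)|\le\vphi^C\Big(\sqrt{\tfrac{\Im m_{sc}(z)}{N\eta}}+\tfrac{1}{N\eta}\Big).
\]
I would also record the Wegner-type consequence that the number of eigenvalues in any window $[E-\eta,E+\eta]$ is at most $CN\eta\,\Im m_N(E+{\rm i}\eta)\le\vphi^C(N\eta+1)$ for $E$ in the bulk, together with the standard no-outlier estimate $\la_1\ge -2-\vphi^CN^{-2/3}$ and $\la_N\le 2+\vphi^CN^{-2/3}$, both also consequences of Theorem~\ref{SLSL}.

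The core of the argument is the counting-function estimate
\[
\sup_{E\in\R}\big|\mathcal N(E)-n_{sc}(E)\big|\le\frac{\vphi^C}{N},\qquad \mathcal N(E):=\tfrac1N\#\{i:\la_i\le E\},\quad n_{sc}(E):=\int_{-\infty}^E\rsc,
\]
which I would obtain via the Helffer--Sj\"ostrand formula. For a smoothed step function $\chi_E$ equal to $1$ on $(-\infty,E-\eta_1]$ and to $0$ on $[E,\infty)$, with $\eta_1=\vphi^{C}/N$, one writes $\int\chi_E\,{\rm d}\mu_N-\int\chi_E\,{\rm d}\mu_{sc}$ as a contour integral of $\partial_{\bar z}\tilde\chi_E(z)\,(m_N(z)-m_{sc}(z))$ plus boundary terms, then estimates it using the averaged local law above, splitting the $\eta$-integration at the scale $\eta_1$ and invoking the Wegner bound for the part of the contour with $\Im z\le\eta_1$; the no-outlier estimate upgrades the resulting bound to all $E\in\R$.

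To conclude I would invert this display. Since $n_{sc}(\ga_k)=k/N$, sandwiching the counting estimate at $\la_k$ and just below it (using that at most $k-1$ eigenvalues lie strictly below $\la_k$) gives $|n_{sc}(\la_k)-n_{sc}(\ga_k)|\le\vphi^C/N$, so that $\la_k$ lies in the $n_{sc}$-preimage of an interval of length $\vphi^C/N$ around $k/N$. Writing $\hat k=\min(k,N+1-k)$, the classical location $\ga_k$ sits at distance $\kappa_k\asymp(\hat k/N)^{2/3}$ from the nearest edge of $[-2,2]$, and $\rsc(x)\asymp\sqrt{\kappa_k}$ throughout that preimage provided $\hat k\ge\vphi^{C'}$ (for $\hat k<\vphi^{C'}$ one invokes the no-outlier bound directly, choosing the final exponent $C$ large enough), so
\[
|\la_k-\ga_k|\lesssim\frac{\vphi^C/N}{\sqrt{\kappa_k}}\asymp\vphi^CN^{-2/3}(\hat k)^{-1/3},
\]
which is the claim.

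The step I expect to be the main obstacle is the counting-function estimate with the sharp $N^{-1}$ error: controlling the Helffer--Sj\"ostrand integral as $\Im z\to 0$, where the bare local law is useless (its error $\vphi^C/(N\eta)$ blows up) and one must instead feed in the Wegner bound, and handling the spectral edge, where $\Im m_{sc}$ degenerates so that both the local-law input and the exclusion of outliers require a separate treatment. All of the generalized-Wigner-specific structure --- the doubly stochastic, $\asymp N^{-1}$ variance profile --- has already been absorbed into the hypotheses of Theorem~\ref{SLSL}, so no further model-dependent input is needed beyond it.
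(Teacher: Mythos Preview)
The paper does not prove this theorem: it is stated in the Appendix purely ``for ease of reference'' and attributed to \cite{ErdYauYin2012}, with no argument given. Your proposal is essentially the standard derivation from that reference (averaged local law $\Rightarrow$ Helffer--Sj\"ostrand counting estimate $\Rightarrow$ inversion of $n_{sc}$), so in that sense it matches the intended source; there is nothing in the present paper to compare against.
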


\begin{remark} From (\ref{eqn:Extend}) and (\ref{eqn:rigidity}) one obtains the following easy estimates  by spectral decomposition. 

First,  denoting $\pmb{\lambda}^{(x)}$ the eigenvalues of $Q+xD$ and $\pmb{u}^{(x)}$ its eigenvectors for any $c>0,D>0$ there exists $C>0$ such that for all $N$ we have
\begin{equation}\label{eqn:far}
\mathbb{P}\left(\sup_{i,j,{\rm dist}(z,[-2,2])>c, |x|\leq N^{-\frac{1}{2}+\kappa}}|\left(\frac{1}{Q-z+xD}\right)_{ij}|\geq \varphi^C\right)\leq N^{-D}.
\end{equation}
Indeed one can simply write
\[|\left(\frac{1}{Q-z+xD}\right)_{ij}|
\leq \sum_{k=1}^N\frac{|u^{(x)}_k(i)u^{(x)}_k(j)|}{|\lambda^{(x)}_k-z|}\leq \varphi^C.
\]
because for any $k$ we have $|u^{(x)}_k|_\infty\le \varphi^C N^{-1/2}$ (from (\ref{eqn:Extend})) and $|\lambda^{(x)}_k|\leq 2+\kappa/2$ (from Theorem $\ref{rigidity}$ and the triangle inequality on spectral norms).

Second,  by the same reasoning we have
\begin{equation}\label{eqn:far2}
\mathbb{P}\left(\sup_{i,{\rm dist}(z,[-2,2])>c, |x|\leq N^{-\frac{1}{2}+\kappa}}|\frac{1}{\eta}{\rm Im}\left(\frac{1}{Q-z+xD}\right)_{ii}|\geq \varphi^C\right)\leq N^{-D}.
\end{equation}
\end{remark}

\begin{lemma}\label{smoothing-approx}
		Uniformly in  $E\in[-2+c,2-c]$ the following holds,  where $c>0$ is a fixed small constant.  Let $\h_0 = N^\varepsilon \h_d $, where $\varepsilon > 0$, so that $\h_d \ll \h_0 \ll N^{-1}$. 
		With the definitions of $q_\alpha$ and $f_E$ as in  (\ref{qa}) and above,
for large enough $N$ we have the following deterministic estimates:
		\begin{equation}\label{smoothing-est-0}
				\Tr f_{E-\h_0}(H) - e^{-N^{\varepsilon/2}} \leq \Nor(E) \leq \Tr f_{E + \h_0}(H) +  e^{-N^{\varepsilon/2}},
		\end{equation}
		and
		\begin{equation}\label{smoothing-est}
				q_\a(\Tr f_{E - \h_0}(H)) \leq \1_{\Nor(E) \geq \a} \leq q_\a(\Tr f_{E + \h_0}(H)) .
		\end{equation}
\end{lemma}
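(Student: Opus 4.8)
The statement to prove is Lemma~\ref{smoothing-approx}, an elementary deterministic comparison between the smoothed counting function $\mathrm{Tr}\, f_E(H)$ and the sharp counting function $\mathcal{N}(E) = \#\{i : \lambda_i \leq E\}$.

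\textbf{Plan.} First I would establish the pointwise bound that $f_E$ from (\ref{eqn:fE}) is a genuine approximation to $\mathbf{1}_{x \leq E}$: since $s(x) = (1+e^x)^{-1}$ satisfies $1 - s(x) \leq e^{-x}$ for $x \geq 0$ and $s(x) \leq e^{x}$ for $x \leq 0$, we get that $f_E(\lambda) = s((\lambda - E)/\eta_d)$ differs from $\mathbf{1}_{\lambda \leq E}$ by at most $e^{-|\lambda - E|/\eta_d}$. In particular, whenever $|\lambda - E| \geq \eta_0 = N^\varepsilon \eta_d$, this error is at most $e^{-N^\varepsilon}$, which after summing over the $N$ eigenvalues (using that all eigenvalues are $O(1)$ with overwhelming probability, or crudely bounding by $N$) gives a total error of at most $N e^{-N^\varepsilon} \leq e^{-N^{\varepsilon/2}}$ for large $N$. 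Since $f_E$ is monotone decreasing, $f_{E - \eta_0}(\lambda) \leq \mathbf{1}_{\lambda \leq E - \eta_0} + (\text{error}) \leq \mathbf{1}_{\lambda \leq E} + (\text{error})$ in the regime where $\lambda$ is not within $\eta_0$ of $E - \eta_0$; more carefully, I would split the sum over eigenvalues into those with $\lambda_i \leq E - \eta_0$ (where $f_{E-\eta_0}(\lambda_i)$ is close to $1$ from below... actually it is close to $\mathbf{1}_{\lambda_i \leq E - \eta_0}$), those with $\lambda_i \geq E$ (where $f_{E - \eta_0}$ is exponentially small), and noting there are no eigenvalues strictly between if we are careful — the cleanest route is: $\mathrm{Tr}\, f_{E - \eta_0}(H) \leq \sum_i \mathbf{1}_{\lambda_i \leq E} + \sum_i e^{-|\lambda_i - (E-\eta_0)|/\eta_d}$, and on the event $\lambda_i \leq E$ this is fine, while on $\lambda_i > E$ we have $|\lambda_i - (E - \eta_0)| \geq \eta_0$ so the exponential term is $\leq e^{-N^\varepsilon}$. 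This yields the left inequality of (\ref{smoothing-est-0}); the right inequality is symmetric with $E + \eta_0$.

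Once (\ref{smoothing-est-0}) is in hand, (\ref{smoothing-est}) follows purely formally from the definition (\ref{qa}) of the cutoff $q_\alpha$. On the one hand, if $\mathrm{Tr}\, f_{E - \eta_0}(H) \geq \alpha - 1/3$ then $q_\alpha(\mathrm{Tr}\, f_{E-\eta_0}(H)) = 1$, but by (\ref{smoothing-est-0}) this forces $\mathcal{N}(E) \geq \alpha - 1/3 - e^{-N^{\varepsilon/2}} \geq \alpha - 1$, and since $\mathcal{N}(E)$ is an integer this gives $\mathcal{N}(E) \geq \lceil \alpha \rceil - 1 \geq \alpha - 1$... here I need to be slightly careful: the claim is $q_\alpha(\mathrm{Tr}\, f_{E-\eta_0}) \leq \mathbf{1}_{\mathcal{N}(E) \geq \alpha}$. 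When the left side is $1$ we need $\mathcal{N}(E) \geq \alpha$; since $\mathcal{N}(E)$ is a nonnegative integer and $\mathcal{N}(E) \geq \alpha - 1/3 - e^{-N^{\varepsilon/2}} > \alpha - 1$, integrality gives $\mathcal{N}(E) \geq \lceil \alpha - 1/3 - e^{-N^{\varepsilon/2}}\rceil$; provided $\alpha$ is such that this ceiling is $\geq \lceil \alpha \rceil$ — which holds because the gap between $\alpha - 1/3$ and $\alpha$ straddles at most... actually the safe statement is that $q_\alpha$ and $\mathbf{1}_{\cdot \geq \alpha}$ applied to an integer-valued quantity, shifted by the $1/3$ thresholds, interlace correctly. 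I would phrase it as: $q_\alpha(x) = 1 \implies x \geq \alpha - 1$ (using $x \geq \alpha - 2/3$ is where $q_\alpha = 0$, so $q_\alpha(x) > 0 \implies x > \alpha - 2/3$, hence $q_\alpha(x) = 1 \implies$ nothing directly, but combined with monotonicity and the integrality of $\mathcal{N}$...). The correct clean argument: $\mathbf{1}_{\mathcal N(E)\ge\alpha}=0$ means $\mathcal N(E)\le \alpha-1$ (integrality), so $\mathrm{Tr}\,f_{E-\eta_0}(H)\le \mathcal N(E)+e^{-N^{\varepsilon/2}}\le \alpha - 1 + e^{-N^{\varepsilon/2}} < \alpha - 2/3$, whence $q_\alpha(\mathrm{Tr}\,f_{E-\eta_0}(H))=0$; this proves the left inequality of (\ref{smoothing-est}). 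Symmetrically, $\mathbf 1_{\mathcal N(E)\ge\alpha}=1$ means $\mathcal N(E)\ge \alpha$, so $\mathrm{Tr}\,f_{E+\eta_0}(H)\ge \mathcal N(E)-e^{-N^{\varepsilon/2}}\ge \alpha - e^{-N^{\varepsilon/2}} > \alpha - 1/3$, whence $q_\alpha(\mathrm{Tr}\,f_{E+\eta_0}(H))=1$; this is the right inequality.

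\textbf{Main obstacle.} There is no real obstacle here — the lemma is deterministic and elementary. The only point requiring mild care is controlling the tail sum $\sum_i e^{-|\lambda_i - E|/\eta_d}$: naively summing $N$ terms each bounded by $e^{-N^\varepsilon}$ gives $N e^{-N^\varepsilon}$, which I absorb into $e^{-N^{\varepsilon/2}}$ for $N$ large; one should note that eigenvalues far from $E$ (say $|\lambda_i| \leq 10$ always, which needs no probabilistic input beyond the trivial operator-norm bound, or one can simply use the stated "deterministic" framing and assume $H$ is in the overwhelming-probability event of Theorem~\ref{rigidity}) contribute even less, but the crude bound suffices. The secondary subtlety is the interlacing between the $1/3$, $2/3$ thresholds in (\ref{qa}) and the integrality of $\mathcal{N}(E)$, handled by the case analysis above: the width $1/3$ is comfortably larger than $e^{-N^{\varepsilon/2}}$, and comfortably smaller than $1$, so the rounding goes through.
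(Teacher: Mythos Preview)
Your proposal is correct and follows essentially the same approach as the paper: bound $\mathrm{Tr}\, f_{E-\eta_0}(H) \leq \mathcal{N}(E) + N \sup_{x>E} f_{E-\eta_0}(x) = \mathcal{N}(E) + N/(1+e^{\eta_0/\eta_d})$ to get (\ref{smoothing-est-0}), then use the definition of $q_\alpha$ together with integrality of $\mathcal{N}(E)$ for (\ref{smoothing-est}). The paper's write-up is one line, but your more explicit case analysis for (\ref{smoothing-est}) (contrapositive on each side, using that the $1/3$ thresholds comfortably beat the $e^{-N^{\varepsilon/2}}$ error) is exactly what that line unpacks to.
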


\begin{proof} We have
\[
\Tr f_{E-\h_0}(H)\leq \Nor(E) + N\sup_{x>E} f_{E-\h_0}(x)= \Nor(E) +\frac{N}{1+e^{\frac{\eta_0}{\eta_d}}}.
\]
		This proves the first inequality of (\ref{smoothing-est-0}), and the second inequality is obtained similarly. The subsequent bound (\ref{smoothing-est}) follows immediately from the definition of $q_\a$ and the fact that $\Nor(E)$ is integer valued.
\end{proof}

The following estimate on the 2-point correlation function for the GUE follows immediately from the proof of \cite{BABo13}*{Lemma 2.9}, with only notational changes to extend $u$ to the scale $cN^{- \frac{4}{3} + \d}$.

\begin{lemma}\label{GUE-small-gap-delta}
		Fix small constants $\varepsilon_0 > 0, c > 0$ and a small enough  $\d > 0$. Then  as $N \to \infty$, uniformly for $x \in (-2 + \varepsilon_0, 2 - \varepsilon_0), |u| < c N^{- \frac{4}{3} + \d}$,  we have
		\begin{equation}
				\rho_2^{{\rm GUE}(N)}(x, x + u) = \frac{1}{48\pi^2} N^{4}(4 - x^2)^2 u^2 + O(N).
		\end{equation}
\end{lemma}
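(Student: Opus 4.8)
The plan is to follow the proof of \cite{BABo13}*{Lemma 2.9} essentially verbatim, the only new ingredient being a check that widening the range of $u$ from ${\rm O}(N^{-4/3})$ to $|u|<cN^{-4/3+\delta}$ (with $\delta$ small) leaves every error term of size ${\rm O}(N)$.

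First I would use the determinantal structure of the GUE: writing $K_N$ for the (real, symmetric) Hermite Christoffel--Darboux kernel normalized so that $\int K_N(x,x)\,{\rm d}x=N$, one has $\rho_2^{{\rm GUE}(N)}(x,x+u)=K_N(x,x)K_N(x+u,x+u)-K_N(x,x+u)^2$. The diagonal part is handled by the one-point asymptotic $K_N(x,x)=N\rsc(x)+{\rm O}(1)$, uniformly for $x\in(-2+\varepsilon_0,2-\varepsilon_0)$: since $\rsc$ is smooth in the bulk and $|u|={\rm o}(N^{-1/2})$, one gets $K_N(x,x)K_N(x+u,x+u)=N^2\rsc(x)^2+{\rm O}(N)$, because the cross term $N^2\rsc(x)\big(\rsc(x+u)-\rsc(x)\big)$ is ${\rm O}(N^2u)={\rm O}(N^{2/3+\delta})={\rm o}(N)$ for $\delta<1/3$.

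The substantive input is the bulk (sine--kernel) Plancherel--Rotach asymptotic
\[
K_N(x,x+u)=\frac{\sin\big(\pi N\rsc(x)u\big)}{\pi u}+{\rm O}(1),
\]
uniformly for $x\in(-2+\varepsilon_0,2-\varepsilon_0)$ and $|u|<cN^{-4/3+\delta}$; this is the same classical estimate on which the proof of \cite{BABo13}*{Lemma 2.9} rests, and on our scale it simplifies twice. The true phase $N\int_x^{x+u}\pi\rsc(s)\,{\rm d}s$ differs from the linearization $\pi N\rsc(x)u$ by ${\rm O}(Nu^2)={\rm o}(1)$, so the linear phase may be used; and since $|\pi N\rsc(x)u|\le \pi c\,\rsc(x)\,N^{\delta-1/3}\to 0$, the kernel itself is ${\rm O}(N)$ in absolute value, whence $K_N(x,x+u)^2=\frac{\sin^2(\pi N\rsc(x)u)}{\pi^2u^2}+{\rm O}(N)$, the cross term being ${\rm O}\big(|\sin(\pi N\rsc(x)u)|/|u|\big)={\rm O}(N\rsc(x))={\rm O}(N)$.

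Finally I would Taylor expand: with $t=\pi N\rsc(x)u\to 0$ one has $\sin^2 t=t^2-\tfrac13 t^4+{\rm O}(t^6)$, so
\[
\frac{\sin^2(\pi N\rsc(x)u)}{\pi^2u^2}=N^2\rsc(x)^2-\frac{\pi^2}{3}N^4\rsc(x)^4u^2+{\rm O}\big(N^6u^4\big),
\]
and $N^6u^4={\rm O}(N^{2/3+4\delta})={\rm o}(N)$ once $\delta<\tfrac1{12}$. Subtracting from $N^2\rsc(x)^2$ and inserting $\rsc(x)=\tfrac1{2\pi}\sqrt{4-x^2}$, so that $\tfrac{\pi^2}{3}\rsc(x)^4=\tfrac1{48\pi^2}(4-x^2)^2$, yields the claimed formula with error ${\rm O}(N)$. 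The only genuine obstacle is the displayed Plancherel--Rotach asymptotic with a uniform ${\rm O}(1)$ error over the whole bulk interval --- this is classical and is exactly the ingredient used in \cite{BABo13} --- after which every remaining step is elementary, and $\delta$ is constrained only by the requirement that the remainders $N^2u$, $N^6u^4$ (and the phase error) be ${\rm o}(N)$, so any $\delta<\tfrac1{12}$ suffices.
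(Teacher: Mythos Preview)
Your proposal is correct and follows exactly the route the paper indicates: the paper does not give an independent proof but simply states that the estimate ``follows immediately from the proof of \cite{BABo13}*{Lemma 2.9}, with only notational changes to extend $u$ to the scale $cN^{-4/3+\delta}$.'' You have carried out precisely those notational changes, carefully verifying that each remainder ($N^2u$, the phase linearization $Nu^2$, and $N^6u^4$) stays ${\rm O}(N)$ on the wider window, and correctly isolating the constraint $\delta<\tfrac1{12}$.
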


\begin{bibdiv}
\begin{biblist}

\bib{BABo13}{article}{
   author={Ben Arous, G.},
   author={Bourgade, P.},
   title={Extreme gaps between eigenvalues of random matrices},
   journal={Ann. Probab.},
   volume={41},
   date={2013},
   number={4},
   pages={2648--2681}
}

\bib{BK}{article}{
   author={Benaych-Georges, F.},
   author={Knowles, A.},
   title={Local semicircle law for Wigner matrices},
   conference={
      title={Advanced topics in random matrices},
   },
   book={
      series={Panor. Synth\`eses},
      volume={53},
      publisher={Soc. Math. France, Paris},
   },
   date={2017},
   pages={1--90}
}

\bib{BlBoRaRu17}{article}{
   author={Blomer, V.},
   author={Bourgain, J.},
   author={Radziwill, M.},
   author={Rudnick, Z.},
   title={Small gaps in the spectrum of the rectangular billiard},
   language={English, with English and French summaries},
   journal={Ann. Sci. \'Ec. Norm. Sup\'er. (4)},
   volume={50},
   date={2017},
   number={5},
   pages={1283--1300}
}

\bib{BoHiKe15}{article}{
   author={Booker, A. R.},
   author={Hiary, G. A.},
   author={Keating, J. P.},
   title={Detecting squarefree numbers},
   journal={Duke Math. J.},
   volume={164},
   date={2015},
   number={2},
   pages={235--275}
}

\bib{Bo22}{article}{
   author={Bourgade, P.},
   title={Extreme gaps between eigenvalues of Wigner matrices},
   journal={J. Eur. Math. Soc. (JEMS)},
   volume={24},
   date={2022},
   number={8},
   pages={2823--2873}
}

\bib{BouDub2020}{article}{
   author={Bourgade, P.},
   author={Dubach, G.},
   title={The distribution of overlaps between eigenvectors of Ginibre
   matrices},
   journal={Probab. Theory Related Fields},
   volume={177},
   date={2020},
   number={1-2},
   pages={397--464}
}

\bib{BoErYa14}{article}{
   author={Bourgade, P.},
   author={Erd{\H o}s, L.},
   author={Yau, H.-T.},
   title={Edge universality of beta ensembles},
   journal={Comm. Math. Phys.},
   volume={332},
   date={2014},
   number={1},
   pages={261--353}
}

\bib{BoLoZe25}{article}{
   author={Bourgade, P.},
   author={Lopatto, P.},
   author={Zeitouni, O.},
   title={Optimal rigidity and maximum of the characteristic polynomial of
   Wigner matrices},
   journal={Geom. Funct. Anal.},
   volume={35},
   date={2025},
   number={1},
   pages={161--253}
}

\bib{BuMi18}{article}{
   author={Bui, H. M.},
   author={Milinovich, M. B.},
   title={Gaps between zeros of the Riemann zeta-function},
   journal={Q. J. Math.},
   volume={69},
   date={2018},
   number={2},
   pages={403--423}
}

\bib{BucScXu25}{article}{
   author={Bucht, T.},
   author={Schnelli, K.},
   author={Xu, Y.},
   title={Quantitative Tracy–Widom laws for sparse random matrices},
   journal={ArXiv:2507.19340},
   date={2025}
}

\bib{ChMaNa18}{article}{
   author={Chhaibi, R.},
   author={Madaule, T.},
   author={Najnudel, J.},
   title={On the maximum of the ${\rm C}\beta {\rm E}$ field},
   journal={Duke Math. J.},
   volume={167},
   date={2018},
   number={12},
   pages={2243--2345}
}

\bib{DeTr19}{article}{
   author={Deift, P. A.},
   author={Menon, G.},
   author={Olver, S.},
   author={Trogdon, T.},
   title={Universality in numerical computations with random data},
   journal={Proc. Natl. Acad. Sci. USA},
   volume={111},
   date={2014},
   number={42},
   pages={14973--14978}
}

\bib{Di03}{article}{
   author={Diaconis, P.},
   title={Patterns in eigenvalues: the 70th Josiah Willard Gibbs lecture},
   journal={Bull. Amer. Math. Soc. (N.S.)},
   volume={40},
   date={2003},
   number={2},
   pages={155--178}
}

\bib{ErScYa11}{article}{
   author={Erd{\H o}s, L.},
   author={Schlein, B.},
   author={Yau, H.-T.},
   title={Universality of random matrices and local relaxation flow},
   journal={Invent. Math.},
   volume={185},
   date={2011},
   number={1},
   pages={75--119}
}

\bib{ErYaYi11}{article}{
   author={Erd{\H o}s, L.},
   author={Yau, H.-T.},
   author={Yin, J.},
   title={Universality for generalized Wigner matrices with Bernoulli
   distribution},
   journal={J. Comb.},
   volume={2},
   date={2011},
   number={1},
   pages={15--81}
   }

\bib{ErdYauYin2012}{article}{
   author={Erd{\H o}s, L.},
   author={Yau, H.-T.},
   author={Yin, J.},
   title={Rigidity of eigenvalues of generalized Wigner matrices},
   journal={Adv. Math.},
   volume={229},
   date={2012},
   number={3},
   pages={1435--1515}
}

\bib{ErYa15}{article}{
   author={Erd{\H o}s, L.},
   author={Yau, H.-T.},
   title={Gap universality of generalized Wigner and $\beta$-ensembles},
   journal={J. Eur. Math. Soc. (JEMS)},
   volume={17},
   date={2015},
   number={8},
   pages={1927--2036}}

\bib{FeLiYa24}{article}{
   author={Feng, R.},
   author={Li, J.},
   title={Small gaps of GSE},
   journal={ArXiv 2409.03324},
   date={2024}
}

\bib{FeTiWe19}{article}{
   author={Feng, R.},
   author={Tian, G.},
   author={Wei, D.},
   title={Small gaps of GOE},
   journal={Geom. Funct. Anal.},
   volume={29},
   date={2019},
   number={6},
   pages={1794--1827}
}

\bib{FeWe18}{article}{
   author={Feng, R.},
   author={Wei, D.},
   title={Large gaps of CUE and GUE},
   journal={ArXiv 1807.02149},
   date={2018},
}

\bib{FeWe21}{article}{
   author={Feng, R.},
   author={Wei, D.},
   title={Small gaps of circular $\beta$-ensemble},
   journal={Ann. Probab.},
   volume={49},
   date={2021},
   number={2},
   pages={997--1032}
}

\bib{FiGu16}{article}{
   author={Figalli, A.},
   author={Guionnet, A.},
   title={Universality in several-matrix models via approximate transport
   maps},
   journal={Acta Math.},
   volume={217},
   date={2016},
   number={1},
   pages={81--176}}

\bib{FyoHiaKea12}{article}{
   author={Fyodorov, Y. V.},
   author={Hiary, G.},
   author={Keating, J.},
   title={Freezing Transition, Characteristic Polynomials of Random Matrices, and the Riemann Zeta Function},
   journal={Phys. Rev. Lett.},
   volume={108},
   date={2012}
}

\bib{JoMa12}{article}{
   author={Johnstone, I. M.},
   author={Ma, Z.},
   title={Fast approach to the Tracy-Widom law at the edge of GOE and GUE},
   journal={Ann. Appl. Probab.},
   volume={22},
   date={2012},
   number={5},
   pages={1962--1988}
}

\bib{LaLoMa20}{article}{
   author={Landon, B.},
   author={Lopatto, P.},
   author={Marcinek, J.},
   title={Comparison theorem for some extremal eigenvalue statistics},
   journal={Ann. Probab.},
   volume={48},
   date={2020},
   number={6},
   pages={2894--2919}}

\bib{LoLu21}{article}{
   author={Lopatto, P.},
   author={Luh, K.},
   title={Tail bounds for gaps between eigenvalues of sparse random
   matrices},
   journal={Electron. J. Probab.},
   volume={26},
   date={2021},
   pages={Paper No. 130, 26}
}

\bib{Meh}{book}{
   author={Mehta, M.  L.},
   title={Random matrices},
   series={Pure and Applied Mathematics (Amsterdam)},
   volume={142},
   edition={3},
   publisher={Elsevier/Academic Press, Amsterdam},
   date={2004},
   pages={xviii+688}
}

\bib{NgTaVu17}{article}{
   author={Nguyen, H.},
   author={Tao, T.},
   author={Vu, V.},
   title={Random matrices: tail bounds for gaps between eigenvalues},
   journal={Probab. Theory Related Fields},
   volume={167},
   date={2017},
   number={3-4},
   pages={777--816}
}

\bib{PaqZei25}{article}{
   author={Paquette, E.},
   author={Zeitouni, O.},
   title={The extremal landscape for the ${\rm C}\beta {\rm E}$ ensemble},
   journal={Forum Math. Sigma},
   volume={13},
   date={2025},
   pages={Paper No. e1, 126}
}

\bib{ScXu22}{article}{
   author={Schnelli, K.},
   author={Xu, Y.},
   title={Convergence rate to the Tracy-Widom laws for the largest
   eigenvalue of Wigner matrices},
   journal={Comm. Math. Phys.},
   volume={393},
   date={2022},
   number={2},
   pages={839--907}}

\bib{ScXu23}{article}{
   author={Schnelli, K.},
   author={Xu, Y.},
   title={Quantitative Tracy-Widom laws for the largest eigenvalue of
   generalized Wigner matrices},
   journal={Electron. J. Probab.},
   volume={28},
   date={2023},
   pages={Paper No. 129, 38}
}

\bib{ScXu23b}{article}{
   author={Schnelli, K.},
   author={Xu, Y.},
   title={Convergence rate to the Tracy-Widom laws for the largest
   eigenvalue of sample covariance matrices},
   journal={Ann. Appl. Probab.},
   volume={33},
   date={2023},
   number={1},
   pages={677--725}
}

\bib{So05}{article}{
   author={Soshnikov, A.},
   title={Statistics of extreme spacing in determinantal random point
   processes},
   language={English, with English and Russian summaries},
   journal={Mosc. Math. J.},
   volume={5},
   date={2005},
   number={3},
   pages={705--719, 744}}

\bib{TaVu11}{article}{
   author={Tao, T.},
   author={Vu, V.},
   title={Random matrices: universality of local eigenvalue statistics},
   journal={Acta Math.},
   volume={206},
   date={2011},
   number={1},
   pages={127--204}}

\bib{Vi01}{book}{
   author={Vinson, J. P.},
   title={Closest spacing of consecutive eigenvalues},
   note={Thesis (Ph.D.)--Princeton University},
   publisher={ProQuest LLC, Ann Arbor, MI},
   date={2001},
   pages={66},
   isbn={978-0493-28474-3}
}

\bib{Wa24}{article}{
   author={Wang, H.},
   title={Quantitative universality for the largest eigenvalue of sample
   covariance matrices},
   journal={Ann. Appl. Probab.},
   volume={34},
   date={2024},
   number={3},
   pages={2539--2565}}

\end{biblist}
\end{bibdiv}

\end{document}